\newcommand{\D}{\mathcal{D}}              
\newcommand{\K}{\mathbb{K}}               
\newcommand{\C}{\mathbb{C}}               
\newcommand{\R}{\mathbb{R}}               
\newcommand{\N}{\mathbb{N}}                
\newcommand{\F}{\mathcal{F}}              
\renewcommand{\S}{\mathcal{S}}              
\renewcommand{\Re}{\mathrm{Re}\,}          
\newcommand{\trace}{\mathrm{tr}}           
\renewcommand{\L}{\mathcal{L}}             
\newcommand{\one}{\mathbbm{1}}             
\newcommand{\Cb}{C_{\mathrm{b}}}
\newcommand{\amin}{a_{\mathrm{min}}} 
\newcommand{\amax}{a_{\mathrm{max}}} 
\newcommand{\azero}{a_0}
\newcommand{\aone}{a_1}
\newcommand{\atwo}{a_2} 
\newcommand{\bzero}{b_0}
\newcommand{\begriff}[1]{\textbf{#1}}
  \def\section{\@startsection{section}{1}%
    \z@{.7\linespacing\@plus\linespacing}{.5\linespacing}%
    {\normalfont\LARGE\bfseries}}
\newcommand{\sect}
{
  \setcounter{equation}{0}
  \setcounter{figure}{0}
  \section
}
\newcommand{\enum}[1]{\textnormal{(\textbf{#1})}}
\theoremstyle{plain}
\newtheorem{definition}{Definition}[section]
\newtheorem{theorem}[definition]{Theorem}
\newtheorem{lemma}[definition]{Lemma}
\newtheorem{corollary}[definition]{Corollary}
\newtheorem{assumption}[definition]{Assumption}
\newtheorem{remark}[definition]{Remark}
\theoremstyle{definition}
\begin{document}
\title[The Identification Problem for complex-valued\\Ornstein-Uhlenbeck Operators in $L^p(\R^d,\C^N)$]{The Identification Problem for complex-valued\\Ornstein-Uhlenbeck Operators in $L^p(\R^d,\C^N)$}
\setlength{\parindent}{0pt}
\vspace*{0.75cm}
\begin{center}
\normalfont\huge\bfseries{\shorttitle}\\
\vspace*{0.25cm}
\end{center}

\vspace*{0.5cm}
\noindent
\hspace*{4.2cm}
\textbf{Denny Otten}$\footnote[1]{e-mail: \textcolor{blue}{dotten@math.uni-bielefeld.de}, phone: \textcolor{blue}{+49 (0)521 106 4784}, \\
                                 fax: \textcolor{blue}{+49 (0)521 106 6498}, homepage: \url{http://www.math.uni-bielefeld.de/~dotten/}, \\
                                 supported by CRC 701 'Spectral Structures and Topological Methods in Mathematics'.}$ \\
\hspace*{4.2cm}
Department of Mathematics \\
\hspace*{4.2cm}
Bielefeld University \\
\hspace*{4.2cm}
33501 Bielefeld \\
\hspace*{4.2cm}
Germany

\vspace*{0.8cm}
\noindent
\hspace*{4.2cm}
Date: \today
\normalparindent=12pt

\vspace{0.4cm}
\noindent
\begin{center}
\begin{minipage}{0.9\textwidth}
  {\small
  \textbf{Abstract.} In this paper we study perturbed Ornstein-Uhlenbeck operators
  \begin{align*}
    \left[\mathcal{L}_{\infty} v\right](x)=A\triangle v(x) + \left\langle Sx,\nabla v(x)\right\rangle-B v(x),\,x\in\mathbb{R}^d,\,d\geqslant 2,
  \end{align*}
  for simultaneously diagonalizable matrices $A,B\in\mathbb{C}^{N,N}$. The unbounded drift term is defined by a skew-symmetric matrix
  $S\in\mathbb{R}^{d,d}$. Differential operators of this form appear when investigating rotating waves in time-dependent reaction
  diffusion systems. We prove under certain conditions that the maximal domain $\mathcal{D}(A_p)$ of the generator $A_p$ belonging to the
  Ornstein-Uhlenbeck semigroup coincides with the domain of $\mathcal{L}_{\infty}$ in $L^p(\mathbb{R}^d,\mathbb{C}^N)$ given by
  \begin{align*}
    \mathcal{D}^p_{\mathrm{loc}}(\mathcal{L}_0)=\left\{v\in W^{2,p}_{\mathrm{loc}}\cap L^p\mid A\triangle v + \left\langle S\cdot,\nabla v\right\rangle\in L^p\right\},\,1<p<\infty.
  \end{align*}
  One key assumption is a new $L^p$-dissipativity condition
  \begin{align*}
    |z|^2\Re\left\langle w,Aw\right\rangle + (p-2)\Re\left\langle w,z\right\rangle\Re\left\langle z,Aw\right\rangle\geqslant\gamma_A |z|^2|w|^2\;\forall\,z,w\in\C^N
  \end{align*}
  for some $\gamma_A>0$. The proof utilizes the following ingredients. First we show the closedness of $\mathcal{L}_{\infty}$ in $L^p$ and derive $L^p$-resolvent estimates 
  for $\mathcal{L}_{\infty}$. Then we prove that the Schwartz space is a core of $A_p$ and apply an $L^p$-solvability result of the resolvent equation for $A_p$.
  A second characterization shows that the maximal domain even coincides with
  \begin{align*}
    \mathcal{D}^p_{\mathrm{max}}(\mathcal{L}_0)=\{v\in W^{2,p}\mid \left\langle S\cdot,\nabla v\right\rangle\in L^p\},\,1<p<\infty.
  \end{align*}
  This second characterization is based on the first one, and its proof requires $L^p$-regularity for the Cauchy problem associated with $A_p$.
  Finally, we show a $W^{2,p}$-resolvent estimate for $\mathcal{L}_{\infty}$ and an $L^p$-estimate for the drift term $\left\langle S\cdot,\nabla v\right\rangle$.
  Our results may be considered as extensions of earlier works by Metafune, Pallara and Vespri to the vector-valued complex case.
  }
\end{minipage}
\end{center}

\noindent
\textbf{Key words.} Complex-valued Ornstein-Uhlenbeck operator, identification problem in $L^p$, $L^p$-dissipativity, $L^p$-resolvent estimates, maximal domain, applications to rotating waves.

\noindent
\textbf{AMS subject classification.} 35J47 (35K57, 47A05, 47A10, 35A02, 47B44).


%
%
\sect{Introduction}
\label{sec:Introduction}

In this paper we study differential operators of the form
\begin{align*}
  \left[\L_{\infty}v\right](x) := A\triangle v(x) + \left\langle Sx,\nabla v(x)\right\rangle - Bv(x),\,x\in\R^d,\,d\geqslant 2,
\end{align*}
for simultaneously diagonalizable matrices $A,B\in\C^{N,N}$ with $\Re\sigma(A)>0$ and a skew-symmetric matrix $S\in\R^{d,d}$.

Introducing the complex Ornstein-Uhlenbeck operator, \cite{UhlenbeckOrnstein1930},
\begin{align*}
  \left[\L_0 v\right](x) := A\triangle v(x) + \left\langle Sx,\nabla v(x)\right\rangle,\,x\in\R^d,
\end{align*}
with \begriff{diffusion term} and \begriff{drift term} given by
\begin{align*}
  A\triangle v(x):=A\sum_{i=1}^{d}\frac{\partial^2}{\partial x_i^2}v(x)\quad\text{and}\quad\left\langle Sx,\nabla v(x)\right\rangle:=\sum_{i=1}^{d}(Sx)_i\frac{\partial}{\partial x_i}v(x),
\end{align*}
we observe that the operator $\L_{\infty}=\L_0-B$ is a constant coefficient perturbation of $\L_0$. Our interest is in skew-symmetric matrices $S=-S^T$, 
in which case the drift term is rotational containing angular derivatives
\begin{align*}
  \left\langle Sx,\nabla v(x)\right\rangle=\sum_{i=1}^{d-1}\sum_{j=i+1}^{d}S_{ij}\left(x_j\frac{\partial}{\partial x_i}-x_i\frac{\partial}{\partial x_j}\right)v(x).
\end{align*}
Such problems arise when investigating exponential decay of rotating waves in reaction diffusion systems, see \cite{Otten2014} and \cite{BeynLorenz2008}. 
The operator $\L_{\infty}$ appears as a far-field linearization at the solution of the nonlinear problem $\L_0 v=f(v)$. The results of this paper are 
crucial for dealing with the nonlinear case, see \cite{Otten2014}.

The aim of this paper is to identify the maximal domain $\D(A_p)$ of the generator $A_p$ belonging to the perturbed Ornstein-Uhlenbeck semigroup in $L^p(\R^d,\C^N)$ 
for $1<p<\infty$. To be more precise, we prove in Theorem \ref{thm:LpMaximalDomainPart1} that the maximal domain $\D(A_p)$ in $L^p(\R^d,\C^N)$ coincides with the domain 
of $\L_{\infty}$ given by
\begin{align*}
  \D^p_{\mathrm{loc}}(\L_0)=\left\{v\in W^{2,p}_{\mathrm{loc}}(\R^d,\C^N)\cap L^p(\R^d,\C^N)\mid A\triangle v + \left\langle S\cdot,\nabla v\right\rangle\in L^p(\R^d,\C^N)\right\}
\end{align*}
for $1<p<\infty$. This result may be considered as an extension of \cite[Proposition 3.2]{Metafune2001}. Note that due to the smooth but unbounded coefficients 
in the drift term the semigroup is not analytic in $L^p(\R^d,\C^N)$ and the generator is not sectorial in $L^p(\R^d,\C^N)$. Therefore, the standard parabolic 
regularity results are not satisfied.

We first show that the Schwartz space is a core of the infinitesimal generator $(A_p,\D(A_p))$ in $L^p(\R^d,\C^N)$ for $1\leqslant p<\infty$. We then 
turn toward the perturbed Ornstein-Uhlenbeck operator $\L_{\infty}$ and prove the closedness of $\L_{\infty}$ on $\D^p_{\mathrm{loc}}(\L_0)$ in $L^p(\R^d,\C^N)$ 
for each $1<p<\infty$. Further, we derive $L^p$-resolvent estimates that imply uniqueness for solutions of the resolvent equation for $\L_{\infty}$ in $L^p(\R^d,\C^N)$ 
for $1<p<\infty$. Combining these three results with the (unique) solvability result of the resolvent equation for the generator $A_p$ from 
\cite[Corollary 5.5]{Otten2014a}, \cite[Corollary 6.7]{Otten2014}, we identify the maximal domain $\D(A_p)=\D^p_{\mathrm{loc}}(\L_0)$. The resolvent estimates 
for $\L_{\infty}$ require a new $L^p$-dissipativity condition for $\L_{\infty}$. For a more detailed outline we refer to Section \ref{sec:AssumptionsAndOutlineOfResults}.

Identification problems concerning second order elliptic operators and in particular Ornstein-Uhlenbeck operators are treated in \cite{Metafune2001}, 
\cite{MetafunePallaraVespri2005} and \cite{PruessRhandiSchnaubelt2006} for $L^p$-spaces, in \cite{MetafunePruessRhandiSchnaubelt2002} for $L^p$-spaces with 
invariant measure and in \cite{DaPratoLunardi1995} for function spaces of bounded continuous and H\"older continuous functions. All these results are 
derived for the scalar real-valued case but for operators with more general principal part. $L^p$-dissipativity results of second order differential operators can be found 
in \cite{CialdeaMazya2005} and \cite{Cialdea2009}. 


We emphasize that the results from Section \ref{subsec:ACoreForTheInfinitesimalGenerator}--\ref{sec:ExtensionsAndFurtherResults} are extensions of the results from 
the PhD thesis \cite{Otten2014} for arbitrary matrices $B\in\C^{N,N}$.

\textbf{Acknowledgment.} The author is greatly indebted to Giorgio Metafune, Alessandra Lunardi and Wolf-J\"urgen Beyn for extensive discussions which helped in clarifying proofs.

%
%
\sect{Assumptions and outline of results}
\label{sec:AssumptionsAndOutlineOfResults}

Consider the differential operator
\begin{align*}
  \left[\L_{\infty}v\right](x) := A\triangle v(x) + \left\langle Sx,\nabla v(x)\right\rangle - Bv(x),\,x\in\R^d,\,d\geqslant 2,
\end{align*}
for some matrices $A,B\in\C^{N,N}$ and $S\in\R^{d,d}$. 

The following conditions will be needed in this paper and relations among them will be discussed below.
\begin{assumption} 
  \label{ass:Assumption1}
  Let $A,B\in\K^{N,N}$ with $\K\in\{\R,\C\}$ and $S\in\R^{d,d}$ be such that
  \begin{flalign}
    &\text{$A$ and $B$ are simultaneously diagonalizable (over $\C$)},         \tag{A1}\label{cond:A8B} &\\
    &\Re\sigma(A)>0, \tag{A2}\label{cond:A2} &\\
    &\text{There exists some $\beta_A>0$ such that} \tag{A3}\label{cond:A3} &\\
    &\quad\Re\left\langle w,Aw\right\rangle\geqslant\beta_A\;\forall\,w\in\K^N,\,|w|=1, \nonumber &\\
    &\text{There exists some $\gamma_A>0$ such that} \tag{A4}\label{cond:A4DC} &\\
    &\quad|z|^2\Re\left\langle w,Aw\right\rangle + (p-2)\Re\left\langle w,z\right\rangle\Re\left\langle z,Aw\right\rangle\geqslant\gamma_A |z|^2|w|^2\;\forall\,z,w\in\K^N \nonumber &\\
    &\text{for some $1<p<\infty$,} \nonumber &\\
    &\text{$S$ is skew-symmetric}.        \tag{A5}\label{cond:A5} &
  \end{flalign}
\end{assumption}

Assumption \eqref{cond:A8B} is a \textbf{system condition} and ensures that some results for scalar equations can be extended to system cases. This condition was used 
in \cite{Otten2014}, \cite{Otten2014a} to derive an explicit formula for the heat kernel of $\L_{\infty}$. It is motivated by the fact that a transformation of a scalar 
complex-valued equation into a $2$-dimensional real-valued system always implies two (real) matrices $A$ and $B$ that are simultaneously diagonalizable (over $\C$).
The \textbf{positivity condition} \eqref{cond:A2} guarantees that the diffusion part $A\triangle$ is an elliptic operator. It requires that all eigenvalues $\lambda$ of $A$ 
are contained in the open right half-plane $\C_+:=\{\lambda\in\C\mid\Re\lambda>0\}$, where $\sigma(A)$ denotes the spectrum of $A$. Condition \eqref{cond:A2} guarantees 
that $A^{-1}$ exists and states that $-A$ is a stable matrix. The \textbf{strong accretivity condition} \eqref{cond:A3} is more restrictive than \eqref{cond:A2}. In \eqref{cond:A3} 
$\left\langle u,v\right\rangle:=\overline{u}^T v$ denotes the standard inner product on $\K^N$. Note that the condition \eqref{cond:A2} is satisfied if and only if
\begin{align*}
  \exists\,\left[\cdot,\cdot\right]\text{ inner product on $\K^N$}:\quad\Re\left[w,Aw\right]\geqslant\beta_A>0\;\forall\,w\in\K^N,\,\left[w,w\right]=1,
\end{align*}
but it does not imply $\left[\cdot,\cdot\right]=\left\langle\cdot,\cdot\right\rangle$. Condition \eqref{cond:A3} ensures that the differential operator $\L_{\infty}$ 
is closed on its (local) domain $\D^p_{\mathrm{loc}}(\L_0)$. The \textbf{$L^p$-dissipativity condition} \eqref{cond:A4DC} seems to be new in the literature and is used 
to prove $L^p$-resolvent estimates for $\L_{\infty}$. Condition \eqref{cond:A4DC} is more restrictive than \eqref{cond:A3} and imposes additional requirements on the 
spectrum of $A$. A geometrical meaning of \eqref{cond:A4DC} in terms of the antieigenvalues of the diffusion matrix $A$ can be found in \cite[Theorem 5.18]{Otten2014}.
We summarize the following relation of assumptions \eqref{cond:A2}--\eqref{cond:A4DC}:
\begin{align*}
  \text{\eqref{cond:A2}}\Longleftarrow\text{\eqref{cond:A3}}\Longleftarrow\text{\eqref{cond:A4DC}}.
\end{align*}
The \textbf{rotational condition} \eqref{cond:A5} implies that the drift term contains only angular derivatives, which is crucial for use our results from \cite{Otten2014a}.

For a matrix $C\in\K^{N,N}$ we denote by $\sigma(C)$ the \begriff{spectrum of $C$}, by $\rho(C):=\max_{\lambda\in\sigma(C)}\left|\lambda\right|$ the \begriff{spectral radius of $C$} 
and by $s(C):=\max_{\lambda\in\sigma(C)}\Re\lambda$ the \begriff{spectral abscissa} (or \begriff{spectral bound}) \begriff{of $C$}. Using this notation, we define the constants
\begin{equation}
  \begin{aligned}
  \amin :=& \left(\rho\left(A^{-1}\right)\right)^{-1}, &&\amax  := \rho(A),              &&\azero := -s(-A),\\
  \aone :=& \frac{\amax^2}{\amin\azero},               &&\atwo:=\frac{4\amax^2}{\azero}, &&\bzero := -s(-B).
  \end{aligned}
  \label{equ:aminamaxazerobzero}
\end{equation}
These constants appear in \cite{Otten2014}, \cite{Otten2014a}. Moreover, let $\beta_B\in\R$ be such that
\begin{align}
  \label{equ:betaB}
  \Re\left\langle w,Bw\right\rangle\geqslant -\beta_B\;\forall\,w\in\K^N,\,|w|=1.
\end{align}
If $\beta_B\leqslant 0$, \eqref{equ:betaB} can be considered as a \textbf{dissipativity condition} for $-B$.

We introduce Lebesgue and Sobolev spaces via
\begin{align*}
  L^p(\R^d,\K^N) :=& \left\{v\in L^1_{\mathrm{loc}}(\R^d,\K^N)\mid \left\|v\right\|_{L^p}<\infty\right\}, \\
  W^{k,p}(\R^d,\K^N) :=& \left\{v\in L^p(\R^d,\K^N)\mid D^{\beta}v\in L^p(\R^d,\K^N)\;\forall\,|\beta|\leqslant k\right\},
\end{align*}
with norms
\begin{align*}
  \left\|v\right\|_{L^p(\R^d,\K^N)} :=& \bigg(\int_{\R^d}\left|v(x)\right|^p dx\bigg)^{\frac{1}{p}}, \\
  \left\|v\right\|_{W^{k,p}(\R^d,\K^N)} :=& \bigg(\sum_{0\leqslant|\beta|\leqslant k}\left\|D^{\beta}v\right\|_{L^p(\R^d,\K^N)}^p\bigg)^{\frac{1}{p}},
\end{align*}
for every $1\leqslant p<\infty$, $k\in\N_0$ and multiindex $\beta\in\N_0^d$. Moreover, we define the \begriff{Schwartz space} via, \cite[VI.5.1 Definition]{EngelNagel2000},
\begin{align}
  \label{equ:RapidlyDecreasingProperty}
  \S(\R^d,\K^N):=\left\{\phi\in C^{\infty}(\R^d,\K^N)\mid\lim_{\left|x\right|\rightarrow\infty} x^{\alpha}D^{\beta}\phi(x)=0\;\forall\,\alpha,\beta\in\N_0^d\right\}.
\end{align}
which we sometimes abbreviate by $\S$. Endowed with the family of seminorms
\begin{align*}
  \left|\phi\right|_{\alpha,\beta}:=\sup_{x\in\R^d}\left| x^{\alpha} D^{\beta}\phi(x)\right|
\end{align*}
the Schwartz space $\S$ becomes a Fr\'{e}chet space containing $C^{\infty}_{\mathrm{c}}(\R^d,\K^N)$ as a dense subspace.

Before we give a detailed outline we briefly review and collect some results from \cite{Otten2014} and \cite{Otten2014a} used in this paper.

Assuming \eqref{cond:A8B}, \eqref{cond:A2} and \eqref{cond:A5} for $\K=\C$ it is shown in \cite[Theorem 4.2-4.4]{Otten2014}, \cite[Theorem 3.1]{Otten2014a} 
that the function $H:\R^d\times\R^d\times]0,\infty[\rightarrow\C^{N,N}$ defined by
\begin{align}
  \label{equ:HeatKernel}
  H(x,\xi,t)=(4\pi t A)^{-\frac{d}{2}}\exp\left(-Bt-(4tA)^{-1}\left|e^{tS}x-\xi\right|^2\right)
\end{align}
is a \textbf{heat kernel} of the perturbed Ornstein-Uhlenbeck operator
\begin{align}
  \label{equ:Linfty2}
  \left[\L_{\infty}v\right](x):=A\triangle v(x)+\left\langle Sx,\nabla v(x)\right\rangle-Bv(x).
\end{align}
This means, that $H$ satisfies the following \begriff{heat kernel properties}
\begin{align}
  &H\in C^{2,2,1}(\R^d\times\R^d\times\R_+^*,\C^{N,N}),                                          \tag{H1}\label{equ:H1} \\
  &\frac{\partial}{\partial t}H(x,\xi,t)=\L_{\infty}H(x,\xi,t) &&\forall\,x,\xi\in\R^d,\,t>0,      \tag{H2}\label{equ:H2} \\
  &\lim_{t\downarrow 0}H(x,\xi,t)=\delta_{x}(\xi)I_N             &&\forall\,x,\xi\in\R^d.          \tag{H3}\label{equ:H3}
\end{align}
Under the same assumptions it is proved in \cite[Theorem 5.3]{Otten2014a} that the family of mappings 
$T(t):L^p(\R^d,\C^N)\rightarrow L^p(\R^d,\C^N)$, $t\geqslant 0$, defined by
\begin{align}
  \left[T(t)v\right](x):= \begin{cases}
                              \int_{\R^d}H(x,\xi,t)v(\xi)d\xi &\text{, }t>0 \\
                              v(x) &\text{, }t=0
                            \end{cases}\quad ,x\in\R^d,
  \label{equ:OrnsteinUhlenbeckSemigroupLp}
\end{align}
generates a strongly continuous semigroup on $L^p(\R^d,\C^N)$ for each $1\leqslant p<\infty$. The semigroup $\left(T(t)\right)_{t\geqslant 0}$ 
is called the Ornstein-Uhlenbeck semigroup if $B=0$. The strong continuity of the semigroup justifies to introduce the infinitesimal generator 
$A_p:L^p(\R^d,\C^N)\supseteq\D(A_p)\rightarrow L^p(\R^d,\C^N)$ of $\left(T(t)\right)_{t\geqslant 0}$, short $\left(A_p,\D(A_p)\right)$, via
\begin{align*}
  A_p v := \lim_{t\downarrow 0}\frac{T(t)v-v}{t},\; 1\leqslant p<\infty
\end{align*}
for every $v\in\D(A_p)$, where the domain (or maximal domain) of $A_p$ is given by
\begin{align*}
  \D(A_p):=&\left\{v\in L^p(\R^d,\C^N)\mid \lim_{t\downarrow 0}\frac{T(t)v-v}{t}\text{ exists in $L^p(\R^d,\C^N)$}\right\} \\
          =&\left\{v\in L^p(\R^d,\C^N)\mid A_p v\in L^p(\R^d,\C^N)\right\}.
\end{align*}
An application of abstract semigroup theory yields the following Corollary \ref{cor:OrnsteinUhlenbeckLpSolvabilityUniqueness} concerning the unique solvability 
of the resolvent equation for $A_p$ in $L^p(\R^d,\C^N)$ for $1\leqslant p<\infty$. This is an essential component for the proof of our main result in 
Theorem \ref{thm:LpMaximalDomainPart1} and is proved in \cite[Corollary 5.5]{Otten2014a}, \cite[Corollary 6.7]{Otten2014}.

\begin{corollary}[Solvability and uniqueness in $L^p(\R^d,\C^N)$]\label{cor:OrnsteinUhlenbeckLpSolvabilityUniqueness}
  Let the assumptions \eqref{cond:A8B}, \eqref{cond:A2} and \eqref{cond:A5} be satisfied for $\K=\C$ and let $1\leqslant p<\infty$. 
  Moreover, let $\lambda\in\C$ with $\Re\lambda>-\bzero$, where $\bzero$ is from \eqref{equ:aminamaxazerobzero}. 
  Then for every $g\in L^p(\R^d,\C^N)$ the resolvent equation
  \begin{align*}
    \left(\lambda I-A_p\right)v = g
  \end{align*}
  admits a unique solution $v_{\star}\in\D(A_p)$, which is given by the integral expression
  \begin{align}
    v_{\star} = R(\lambda,A_p)g = \int_{0}^{\infty}e^{-\lambda t}T(t)g dt 
                            = \int_{0}^{\infty}e^{-\lambda t}\int_{\R^d}H(\cdot,\xi,t)g(\xi)d\xi dt.
    \label{equ:IntegralExpressionLp}
  \end{align}
  Moreover, the following resolvent estimate holds
  \begin{align*}
    \left\|v_{\star}\right\|_{L^p(\R^d,\C^N)}\leqslant \frac{a_1^{\frac{d}{2}}}{\Re\lambda+\bzero}\left\|g\right\|_{L^p(\R^d,\C^N)}.
  \end{align*}
\end{corollary}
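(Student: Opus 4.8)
The plan is to derive the corollary from the elementary theory of strongly continuous semigroups, the only non-formal ingredient being an exponential bound for the operator norm of $T(t)$ on $L^p(\R^d,\C^N)$. By \cite[Theorem 5.3]{Otten2014a}, recalled above, the family \eqref{equ:OrnsteinUhlenbeckSemigroupLp} is already known to be a $C_0$-semigroup on $L^p(\R^d,\C^N)$ with generator $(A_p,\D(A_p))$; it therefore remains to prove the growth estimate $\|T(t)v\|_{L^p}\leqslant\aone^{d/2}e^{-\bzero t}\|v\|_{L^p}$ for all $t\geqslant0$ and then to invoke abstract theory.

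For the growth estimate I would work directly with the explicit kernel \eqref{equ:HeatKernel}. Writing $H(x,\xi,t)=h(e^{tS}x-\xi,t)$ with $h(\zeta,t):=(4\pi tA)^{-d/2}\exp\!\big(-Bt-(4tA)^{-1}|\zeta|^2\big)$, one has $T(t)v=\big(h(\cdot,t)*v\big)\circ e^{tS}$; since $e^{tS}$ is orthogonal by \eqref{cond:A5}, composition with $e^{tS}$ is an isometry on $L^p(\R^d,\C^N)$, so Young's inequality reduces everything to bounding $\int_{\R^d}|h(\zeta,t)|\,d\zeta$. By \eqref{cond:A8B} there is an invertible $Q\in\C^{N,N}$ simultaneously diagonalizing $A=Q\diag(\lambda_1,\dots,\lambda_N)Q^{-1}$ and $B=Q\diag(\mu_1,\dots,\mu_N)Q^{-1}$, whence
\[
  h(\zeta,t)=(4\pi t)^{-d/2}\,Q\,\diag\!\Big(\lambda_j^{-d/2}e^{-\mu_j t}e^{-(4t\lambda_j)^{-1}|\zeta|^2}\Big)\,Q^{-1}.
\]
Passing to norms and using \eqref{cond:A2} (which gives $\Re\lambda_j\geqslant\azero>0$, hence integrability of each scalar Gaussian and $1/\Re\lambda_j\leqslant1/\azero$), the bounds $\amin\leqslant|\lambda_j|\leqslant\amax$, the inequality $\Re\mu_j\geqslant-s(-B)=\bzero$, and the Gaussian integral $\int_{\R^d}e^{-c|\zeta|^2}\,d\zeta=(\pi/c)^{d/2}$, one arrives at $\int_{\R^d}|h(\zeta,t)|\,d\zeta\leqslant\aone^{d/2}e^{-\bzero t}$, and hence at the claimed bound for $\|T(t)\|$ on $L^p$, $1\leqslant p<\infty$.

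With $M:=\aone^{d/2}$ and $\omega:=-\bzero$ this is precisely a growth estimate $\|T(t)\|\leqslant Me^{\omega t}$, and the rest is standard semigroup theory. Every $\lambda\in\C$ with $\Re\lambda>\omega=-\bzero$ then belongs to $\rho(A_p)$, the resolvent has the Laplace-transform representation $R(\lambda,A_p)g=\int_0^\infty e^{-\lambda t}T(t)g\,dt$, and $\|R(\lambda,A_p)\|\leqslant M/(\Re\lambda-\omega)=\aone^{d/2}/(\Re\lambda+\bzero)$. Consequently $(\lambda I-A_p)v=g$ has the unique solution $v_\star=R(\lambda,A_p)g$, satisfying the asserted estimate. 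Finally, substituting $[T(t)g](x)=\int_{\R^d}H(x,\xi,t)g(\xi)\,d\xi$ into the Laplace integral and exchanging the order of integration — legitimate by Fubini, because the $L^1$-bound on $h$ together with $\Re\lambda>-\bzero$ makes $\int_0^\infty\!\int_{\R^d}e^{-\Re\lambda\,t}|h(e^{tS}x-\xi,t)|\,|g(\xi)|\,d\xi\,dt$ finite — yields the double-integral form \eqref{equ:IntegralExpressionLp}.

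The heart of the proof is the estimate of $\int_{\R^d}|h(\zeta,t)|\,d\zeta$: one must carry the similarity transformation $Q\,(\cdots)\,Q^{-1}$ and the spectral data through the complex, matrix-valued Gaussian computation so that the constants assemble into exactly $\aone^{d/2}=(\amax^2/(\amin\azero))^{d/2}$. Once that growth bound is in hand, the solvability, uniqueness, Laplace representation and resolvent estimate all follow from routine abstract arguments.
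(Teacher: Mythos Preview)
Your approach is correct and coincides with what the paper indicates: the corollary is not proved here but quoted from \cite[Corollary 5.5]{Otten2014a} and \cite[Corollary 6.7]{Otten2014}, where it is obtained exactly by establishing an exponential growth bound $\|T(t)\|_{L^p\to L^p}\leqslant Me^{-\bzero t}$ via Young's inequality and the $L^1$-bound on the kernel, and then invoking the Laplace-transform representation of the resolvent from abstract $C_0$-semigroup theory.

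One small correction is in order. When you pass from the diagonalized kernel back through the similarity $Q(\cdots)Q^{-1}$, the matrix norm picks up the condition number $\kappa(Q)=|Q|\,|Q^{-1}|$, so the pointwise estimate on $|h(\zeta,t)|$ actually gives
\[
  \int_{\R^d}|h(\zeta,t)|\,d\zeta\ \leqslant\ \kappa(Q)\,\aone^{d/2}e^{-\bzero t},
\]
not $\aone^{d/2}e^{-\bzero t}$; compare \eqref{equ:IntegralEqualityForKthPowerOfPsi} in the paper, which carries precisely this factor $\kappa(Y)$. Thus the constants do not ``assemble into exactly $\aone^{d/2}$'' as you claim, and the resolvent bound you obtain by this route is $\|v_\star\|_{L^p}\leqslant\kappa(Q)\,\aone^{d/2}(\Re\lambda+\bzero)^{-1}\|g\|_{L^p}$. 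This does not affect the solvability, uniqueness, or the integral representation, and the discrepancy with the stated constant is immaterial for every application in the paper.
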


In Section \ref{subsec:ACoreForTheInfinitesimalGenerator} we analyze subspaces of the maximal domain $\D(A_p)$. Assuming 
\eqref{cond:A8B}, \eqref{cond:A2} and \eqref{cond:A5} for $\K=\C$ we prove in Theorem \ref{thm:CoreForTheInfinitesimalGenerator} 
that the Schwartz space $\S(\R^d,\C^N)$ is a core of the infinitesimal generator $\left(A_p,\D(A_p)\right)$ for every 
$1\leqslant p<\infty$. In particular, the \begriff{abstract operator} $A_p$ coincides with the \begriff{formal operator} $\L_{\infty}$ 
on $\S(\R^d,\C^N)$ for $1\leqslant p<\infty$, meaning that $A_p\phi=\L_{\infty}\phi$ for every $\phi\in\S(\R^d,\C^N)$. 
The proof uses Lebesgue's dominated convergence theorem in a similar way to \cite[Proposition 2.2 and 3.2]{Metafune2001} 
and ideas from \cite[II.2.13]{EngelNagel2000}.

In Section \ref{sec:TheOrnsteinUhlenbeckOperatorInLp} we analyze the operator $\L_{\infty}:L^p(\R^d,\C^N)\supseteq\D^p_{\mathrm{loc}}(\L_0)\rightarrow L^p(\R^d,\C^N)$ 
on its domain
\begin{align*}
  \D^p_{\mathrm{loc}}(\L_0):=\left\{v\in W^{2,p}_{\mathrm{loc}}(\R^d,\C^N)\cap L^p(\R^d,\C^N)\mid A\triangle v+\left\langle S\cdot,\nabla v\right\rangle\in L^p(\R^d,\C^N)\right\}.
\end{align*}
Under the assumption \eqref{cond:A3} for $\K=\C$ we show in Lemma \ref{lem:ClosednessOfL0} that $\L_{\infty}$ is a closed 
operator in $L^p(\R^d,\C^N)$ for every $1<p<\infty$. This justifies to introduce and analyze the resolvent of $\L_{\infty}$. 
Assuming stonger assumption \eqref{cond:A4DC} and \eqref{cond:A5} for $1<p<\infty$ and $\K=\C$, we prove in Theorem \ref{thm:UniquenessInDpmax} 
that solutions $v_{\star}\in\D^p_{\mathrm{loc}}(\L_0)$ of the resolvent equation
\begin{align*}
  \left(\lambda I-\L_{\infty}\right)v=g
\end{align*}
are unique for every $g\in L^p(\R^d,\C^N)$ and $\lambda\in\C$ with $\Re\lambda>\beta_B$, where $\beta_B$ is from \eqref{equ:betaB}. 
The main idea of the proof comes from \cite[Theorem 2.2 and Remark 2.3]{MetafunePallaraVespri2005} for the scalar real-valued case. 
But we refer also to \cite[Theorem 3.1]{BeynLorenz2008} for the special case $d=2$ with positive definite matrix $A\in\R^{N,N}$. 
In contrast to \cite{MetafunePallaraVespri2005} and \cite{BeynLorenz2008}, our proof requires the additional $L^p$-dissipativity 
condition \eqref{cond:A4DC}.

In Section \ref{sec:IdentificationProblemForTheComplexOrnsteinUhlenbeckOperatorInLp} we solve the identification problem for the 
perturbed Ornstein-Uhlenbeck operator $\L_{\infty}$. Assuming \eqref{cond:A8B}, \eqref{cond:A4DC} and \eqref{cond:A5} for $1<p<\infty$ 
and $\K=\C$, we prove in Theorem \ref{thm:LpMaximalDomainPart1} that the maximal domain $\D(A_p)$ equals $\D^p_{\mathrm{loc}}(\L_0)$. 
In particular, we show that the abstract operator $A_p$ and the formal operator $\L_{\infty}$ coincide on $\D(A_p)$. The proof of 
Theorem \ref{thm:LpMaximalDomainPart1} is structured as follows: To prove $\D(A_p)\subseteq\D^p_{\mathrm{loc}}(\L_0)$ we need 
that $\S(\R^d,\C^N)$ is a core of $(A_p,\D(A_p))$ (Theorem \ref{thm:CoreForTheInfinitesimalGenerator}) and the closedness of $\L_{\infty}$ 
(Lemma \ref{lem:ClosednessOfL0}). Conversely, the inclusion $\D(A_p)\supseteq\D^p_{\mathrm{loc}}(\L_0)$ requires the (unique) solvability of the 
resolvent equation for $A_p$ (Corollary \ref{cor:OrnsteinUhlenbeckLpSolvabilityUniqueness}), the uniqueness for solutions of the resolvent equation 
for $\L_{\infty}$ (Theorem \ref{thm:UniquenessInDpmax}) and the inclusion $\D(A_p)\subseteq\D^p_{\mathrm{loc}}(\L_0)$ that has been shown before.
The main idea for the first part of the proof comes from \cite[Proposition 2.2 and 3.2]{Metafune2001}, where such a result was proved 
for the scalar real-valued Ornstein-Uhlenbeck operator
\begin{align*}
  \trace(Q D^2 v(x))+\left\langle Sx,\nabla v(x)\right\rangle,\,x\in\R^d
\end{align*}
with $Q\in\R^{d,d}$, $Q=Q^T$, $Q>0$ and $0\neq S\in\R^{d.d}$. We conclude this section with some extensions and further results concerning 
the characterization of the maximal domain.

In Section \ref{sec:ExtensionsAndFurtherResults} we present a second characterization of the maximal domain $\D(A_p)$ of $A_p$. Assuming 
\eqref{cond:A8B}, \eqref{cond:A4DC} and \eqref{cond:A5} for $1<p<\infty$ and $\K=\C$, we prove in Theorem \ref{equ:MaximalDomainPart2} that 
the maximal domain $\D(A_p)$ even coincides with
\begin{align*}
  \D^p_{\mathrm{max}}(\L_0):=\left\{v\in W^{2,p}(\R^d,\C^N)\mid \left\langle S\cdot,\nabla v\right\rangle\in L^p(\R^d,\C^N)\right\}.
\end{align*}
The proof of $\D(A_p)\supseteq\D^p_{\mathrm{max}}(\L_0)$ is straightforward. To prove $\D(A_p)\subseteq\D^p_{\mathrm{max}}(\L_0)$ we analyze 
the abstract Cauchy problem for $A_p$ and apply $L^p$-regularity results from \cite[Theorem 5.1]{Otten2014a}, \cite[Theorem 6.3]{Otten2014} 
for the homogeneous, and from \cite[Theorem 5.24]{Otten2014} for the inhomogeneous Cauchy problem. The main idea of the proof comes from 
\cite{MetafunePallaraVespri2005}, where this result is proved for the scalar real-valued Ornstein-Uhlenbeck operator. We emphasize that the 
proof of \cite[Theorem 5.24]{Otten2014} uses a generalization of \cite[IV. Theorem 9.1]{LadyzenskajaSolonnikovUralceva1968} to the complex-valued 
case (which, however, has not been carried out in detail). Finally, under the same assumptions, we prove in Corollary \ref{thm:ResolventEstimatesForL0InLp} 
a $W^{2,p}$-resolvent estimate for $A_p$ and an $L^p$-estimate for the drift term $\left\langle S\cdot,\nabla v\right\rangle$. The proof 
utilizes the norm equivalence of the graph norms of $A_p$ and $\L_{\infty}$ from \cite[Corollary 5.26]{Otten2014}.

%
%
\sect{A core for abstract Ornstein-Uhlenbeck operators in \texorpdfstring{$L^p(\R^d,\C^N)$}{Lp(Rd,CN)}}
\label{subsec:ACoreForTheInfinitesimalGenerator}

The aim of this section is to show that the Schwartz space $\S:=\S(\R^d,\C^N)$ is a core for the infinitesimal generator $(A_p,\D(A_p))$ of $\L_{\infty}$ 
for every $1\leqslant p<\infty$. For the following definition see \cite[II.1.6 Definition]{EngelNagel2000}.

\begin{definition}
  A subspace $D\subseteq\D(A_p)$ of the maximal domain $\D(A_p)$ of the linear operator $A_p:L^p(\R^d,\C^N)\supseteq\D(A_p)\rightarrow L^p(\R^d,\C^N)$ 
  with $1\leqslant p<\infty$ is called a \begriff{core for $\left(A_p,\D(A_p)\right)$} if $D$ is dense in $\D(A_p)$ with respect to the graph norm of $A_p$
  \begin{align*}
    \left\|v\right\|_{A_p}:=\left\|A_p v\right\|_{L^p(\R^d,\C^N)}+\left\|v\right\|_{L^p(\R^d,\C^N)},\quad v\in\D(A_p).
  \end{align*}
\end{definition}

The next theorem states that the Schwartz space $\S(\R^d,\C^N)$ is a core for the infinitesimal generator $(A_p,\D(A_p))$ of the semigroup $\left(T(t)\right)_{t\geqslant 0}$ 
in $L^p(\R^d,\C^N)$ for $1\leqslant p<\infty$. Moreover, it turns out that the formal operator $\L_{\infty}$ and the abstract operator $A_p$ coincide on the 
Schwartz space $\S(\R^d,\C^N)$. This is an extension of the real-valued scalar result in \cite[Proposition 2.2 and 3.2]{Metafune2001} to complex valued systems 
and also an extension of \cite[Theorem 5.10]{Otten2014} which assumes $B=0$.

\begin{theorem}[Core for the infinitesimal generator]\label{thm:CoreForTheInfinitesimalGenerator}
  Let the assumptions \eqref{cond:A8B}, \eqref{cond:A2} and \eqref{cond:A5} be satisfied for $\K=\C$ and let $1\leqslant p<\infty$. Then the Schwartz space 
  $\S(\R^d,\C^N)\subseteq\D(A_p)$ is a core for $\left(A_p,\D(A_p)\right)$.
\end{theorem}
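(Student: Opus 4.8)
The plan is to use the standard semigroup-theoretic criterion for a core (see \cite[II.1.7 Proposition]{EngelNagel2000}): a subspace $D \subseteq \D(A_p)$ that is dense in $L^p(\R^d,\C^N)$ and invariant under the semigroup $(T(t))_{t \geqslant 0}$ is automatically a core for $(A_p, \D(A_p))$. The density of $\S(\R^d,\C^N)$ in $L^p$ is classical. Thus the work splits into two independent tasks: (i) show $\S \subseteq \D(A_p)$ together with the identity $A_p \phi = \L_\infty \phi$ for all $\phi \in \S$, and (ii) show that $T(t)\S \subseteq \S$ for every $t \geqslant 0$.

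For task (i), fix $\phi \in \S$ and study the difference quotient $t^{-1}(T(t)\phi - \phi)$. The natural approach is to write, using the heat-kernel representation \eqref{equ:OrnsteinUhlenbeckSemigroupLp} and the change of variables $\xi = e^{tS}x - \sqrt{4t}\,A^{1/2}y$ suggested by the Gaussian factor in \eqref{equ:HeatKernel}, the expression
\begin{align*}
  [T(t)\phi](x) = \pi^{-\frac{d}{2}} \int_{\R^d} e^{-Bt}\, e^{-|y|^2}\, \phi\!\left(e^{-tS}\bigl(e^{tS}x - \sqrt{4t}\,A^{1/2}y\bigr)\right) dy,
\end{align*}
interpreting the matrix square root and the Gaussian integral via simultaneous diagonalization of $A$ (legitimate by \eqref{cond:A8B}, \eqref{cond:A2}); here one uses that $e^{-tS}$ is orthogonal by \eqref{cond:A5}. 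Then Taylor-expand $\phi$ and the matrix exponentials to second order in $\sqrt{t}$, integrate against the Gaussian weight (odd moments vanish, the second moments reproduce $A\triangle$ and, together with the $e^{tS}$-expansion, the drift $\langle Sx, \nabla\phi\rangle$), and the $e^{-Bt}$ factor contributes $-B\phi$. The remainder is controlled in $L^p$ by dominated convergence: the Schwartz decay of $\phi$ and its derivatives, combined with polynomial-in-$y$ bounds on the expansion error and the Gaussian weight, give an $L^p$-dominating function uniformly for small $t$. This yields $\lim_{t\downarrow 0} t^{-1}(T(t)\phi - \phi) = \L_\infty \phi$ in $L^p$, hence $\phi \in \D(A_p)$ and $A_p\phi = \L_\infty\phi$. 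This is essentially the argument of \cite[Proposition 2.2]{Metafune2001}, adapted to the complex vector-valued kernel.

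For task (ii), the invariance $T(t)\S \subseteq \S$, one works from the representation above: $[T(t)\phi](x)$ is $C^\infty$ in $x$ because one may differentiate under the integral sign (the integrand is smooth in $x$ and, after differentiation, still dominated by a Gaussian times a Schwartz function composed with an affine map), and one checks the rapid-decay seminorms $|T(t)\phi|_{\alpha,\beta}$ are finite by moving the $x$-derivatives onto $\phi$ via the chain rule and estimating $|x^\alpha \phi^{(k)}(e^{-tS}(e^{tS}x - \sqrt{4t}A^{1/2}y))|$: writing $x = e^{-tS}(e^{tS}x)$ and $e^{tS}x = (\text{argument of }\phi) + \sqrt{4t}A^{1/2}y$, the polynomial $x^\alpha$ is bounded by a polynomial in the argument of $\phi$ (absorbed by its Schwartz decay) times a polynomial in $y$ (absorbed by the Gaussian). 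Since $\phi \in \S$ implies $A_p\phi = \L_\infty\phi \in \S$, invariance also follows abstractly on a dense set and one can alternatively invoke that $\S$ is $\L_\infty$-invariant together with a semigroup-invariance argument, but the direct kernel estimate is cleanest.

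The main obstacle I anticipate is task (i): making the Taylor-remainder estimate uniform in $t$ near $0$ and genuinely $L^p$-integrable in $x$ for the complex vector-valued kernel. One must handle the matrix exponential $e^{-Bt}$ (harmless, bounded with bounded derivatives on $t\in[0,1]$) and, more delicately, the $t$-dependence entering through $\sqrt{t}A^{1/2}y$ inside the argument of $\phi$ and through the rotation $e^{tS}$ acting on the unbounded variable $x$ in the drift term — this last point is where the rotational structure \eqref{cond:A5} is essential, since $\langle Sx,\nabla\phi\rangle$ is an unbounded coefficient and only the skew-symmetry keeps $e^{tS}x$ on a sphere so that the substitution is measure-preserving and the error terms remain integrable. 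Once the dominating function is identified (a finite sum of terms of the form $P(y)e^{-\epsilon|y|^2}\,\psi(\text{affine in }x,y)$ with $\psi \in L^p$ uniformly), dominated convergence finishes the argument.
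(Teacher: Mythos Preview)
Your overall strategy matches the paper's: apply \cite[II.1.7 Proposition]{EngelNagel2000} after checking density, the inclusion $\S\subseteq\D(A_p)$ with $A_p=\L_\infty$ on $\S$, and semigroup invariance of $\S$. However, two points deserve correction or sharpening.

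\textbf{A type error in your change of variables.} In this paper $A\in\C^{N,N}$ acts on the \emph{components} of $v$, not on the spatial variable; the heat kernel \eqref{equ:HeatKernel} has the form $(4\pi t A)^{-d/2}\exp\bigl(-Bt-(4tA)^{-1}|e^{tS}x-\xi|^2\bigr)$, where $|e^{tS}x-\xi|^2$ is a \emph{scalar} multiplying the matrix $(4tA)^{-1}$. Hence the substitution $\xi=e^{tS}x-\sqrt{4t}\,A^{1/2}y$ is ill-formed: $A^{1/2}y$ does not make sense for $y\in\R^d$, and even after diagonalising $A$ the eigenvalues are complex, so no real change of variables of this shape is available. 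The paper instead uses the real substitution $\psi=e^{tS}x-\xi$, keeps the matrix kernel $K(\psi,t)=(4\pi tA)^{-d/2}\exp(-Bt-(4tA)^{-1}|\psi|^2)$ intact, and Taylor-expands $\phi(e^{tS}x-\psi)$ about $x$. (There is also no $e^{-tS}$ in the argument of $\phi$; you have an extra factor there.)

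\textbf{The domination step is the crux and is harder than your sketch suggests.} You correctly identify it as the main obstacle, but the paper's argument shows why a single dominating function of the form ``polynomial times Gaussian times Schwartz'' is not enough: because the drift coefficient $Sx$ is unbounded, the Taylor remainder involves $|e^{tS}x-x|^3\sim t^3|Sx|^3$ and terms $|\psi|^3$, and one must produce an $x$-integrable bound uniform in $t\in(0,t_0]$. The paper does this by splitting into four regions according to $|x|\lessgtr R$ and $|\psi|\lessgtr|x|/4$, exploiting on the outer regions the rapid decay of either $\phi$ (when $|\psi|\leqslant|x|/4$, so the argument of $D^\beta\phi$ stays $\geqslant|x|/4$) or of $K(\cdot,t)$ (when $|\psi|\geqslant|x|/4$). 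Without this splitting the remainder is not visibly $L^p$-dominated in $x$.

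\textbf{On invariance $T(t)\S\subseteq\S$.} Your direct seminorm estimate would work once the change of variables is fixed, but the paper takes a cleaner route via the Fourier transform: one writes $T(t)\phi=(K(t)*\phi)(e^{tS}\cdot)$, checks $\F[\psi(e^{tS}\cdot)](\xi)=[\F\psi](e^{tS}\xi)$ using \eqref{cond:A5}, and concludes $\F[T(t)\phi]=[\F K(t)](e^{tS}\cdot)\,[\F\phi](e^{tS}\cdot)\in\S$ since $K(\cdot,t)\in\S$ and $\S$ is closed under pointwise multiplication. This avoids all derivative bookkeeping.
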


\begin{proof}
  The proof is subdivided into the following three steps:
  \begin{enumerate}
    \item[\enum{1}] $\S\subseteq L^p(\R^d,\C^N)$ is dense w.r.t. $\left\|\cdot\right\|_{L^p(\R^d,\C^N)}$,
    \item[\enum{2}] $\S$ is a subspace of $\D(A_p)$, i.e. $\S\subseteq\D(A_p)$ and $A_p\phi=\L_{\infty}\phi$ $\forall\,\phi\in\S$,
    \item[\enum{3}] $\S$ is invariant under the semigroup $\left(T(t)\right)_{t\geqslant 0}$, i.e. $T(t)\S\subseteq\S$ $\forall\,t\geqslant 0$.
  \end{enumerate}
  The assertion of Theorem \ref{thm:CoreForTheInfinitesimalGenerator} then follows directly from an application of \cite[II.1.7 Proposition]{EngelNagel2000}.\\
  \enum{1}: Due to the inclusion $C^{\infty}_{\mathrm{c}}(\R^d,\C^N)\subseteq\S(\R^d,\C^N)\subseteq L^p(\R^d,\C^N)$ and since $C^{\infty}_{\mathrm{c}}(\R^d,\C^N)$ 
  is dense in $L^p(\R^d,\C^N)$ w.r.t. $\left\|\cdot\right\|_{L^p}$ for every $1\leqslant p<\infty$, we deduce that $\S(\R^d,\C^N)$ is also dense in $L^p(\R^d,\C^N)$ 
  w.r.t. $\left\|\cdot\right\|_{L^p}$ for every $1\leqslant p<\infty$. \\
  \enum{2}: Let $\phi\in\S(\R^d,\C^N)$ be arbitrary. In order to prove $\S\subseteq\D(A_p)$ we must show that
  \begin{align*}
    \phi\in L^p(\R^d,\C^N),\;
    \L_{\infty}\phi\in L^p(\R^d,\C^N),\;
    \lim_{t\downarrow 0}\frac{1}{t}\left(T(t)\phi-\phi\right)\text{ exists in }L^p(\R^d,\C^N).
  \end{align*}
  1. Since $\S(\R^d,\C^N)$ is a subspace of $L^p(\R^d,\C^N)$ for $1\leqslant p<\infty$, we deduce\linebreak $\phi\in L^p(\R^d,\C^N)$. Therefore, it is sufficient 
  to show $\L_{\infty}\phi\in\S(\R^d,\C^N)$. Then we deduce $\L_{\infty}\phi\in L^p(\R^d,\C^N)$ by the same argument. Since $\phi\in\S(\R^d,\C^N)\subseteq C^{\infty}(\R^d,\C^N)$ 
  and since $\L_{\infty}$ has smooth coefficients we infer that $\L_{\infty}\phi\in C^{\infty}(\R^d,\C^N)$. Considering the operator
  \begin{align*}
       \left[\L_{\infty}\phi\right](x)
    =& A\triangle\phi(x)+\left\langle Sx,\nabla\phi(x)\right\rangle-B\phi(x) \\
    =& A\sum_{i=1}^{d}D_i^2\phi(x) + \sum_{i=1}^{d}\sum_{j=1}^{d}S_{ij}x_j D_i\phi(x)-B\phi(x)
  \end{align*}
  and 
  \begin{align*}
       x^{\tilde{\alpha}}D^{\tilde{\beta}}\left[\L_0\phi\right](x)
    = A\sum_{i=1}^{d}x^{\tilde{\alpha}}D^{\tilde{\beta}}D_i^2\phi(x) +\sum_{i=1}^{d}\sum_{j=1}^{d}S_{ij}x_jx^{\tilde{\alpha}}D^{\tilde{\beta}}D_i\phi(x)
      -Bx^{\tilde{\alpha}}D^{\tilde{\beta}}\phi(x)
  \end{align*}
  for $\tilde{\alpha},\tilde{\beta}\in\N_0^d$ and using the fact that $\phi$ is rapidly decreasing, we conclude from \eqref{equ:RapidlyDecreasingProperty} with 
  $\alpha=\tilde{\alpha},\,\beta=\tilde{\beta}+2e_i$ and $\alpha=\tilde{\alpha}+e_j,\,\beta=\tilde{\beta}+e_i$, that every term on the right hand side vanishes 
  as $|x|$ goes to infinity. Hence, $\L_{\infty}\phi\in\S$. It remains to verify that the limit exists in $L^p(\R^d,\C^N)$. \\
  2. We first give a motivation how the limit looks like: Using the heat kernel properties \eqref{equ:H2} and \eqref{equ:H3} a formal computation shows
  \begin{align*}
    \left[A_p\phi\right](x) 
    :=& \lim_{t\downarrow 0}\frac{\left[T(t)\phi\right](x)-\phi(x)}{t}
    =   \lim_{t\downarrow 0}\left[\frac{T(t)-T(0)}{t-0}\right]\phi(x) \\
    =&  \left[\frac{d}{dt}\left[T(t)\phi\right](x)\right]_{t=0}
    =   \left[\int_{\R^d}\frac{\partial}{\partial t}H(x,\xi,t)\phi(\xi)d\xi\right]_{t=0}\\
    =&  \left[\L_{\infty}\int_{\R^d}H(x,\xi,t)\phi(\xi)d\xi\right]_{t=0}
    =   \L_{\infty}\int_{\R^d}\delta_{x}(\xi)\phi(\xi)d\xi
    =   \left[\L_{\infty}\phi\right](x).
  \end{align*}
  This suggests that the limit tends (pointwise) to $A_p\phi(x):=\L_{\infty}\phi(x)\in L^p(\R^d,\C^N)$, provided that all steps in the calculation are justified. We next 
  prove that the limit even exists in $L^p(\R^d,\C^N)$ w.r.t. $\left\|\cdot\right\|_{L^p}$, which is indeed much more involved, \cite[Proposition 2.2 and 3.2]{Metafune2001}. \\ 
  3. Our aim is to apply Lebesgue's dominated convergence theorem in $L^p$ from \cite[Satz 1.23]{Alt2006} with 
  \begin{align*}
    f_t(x):=&\frac{\left[T(t)\phi\right](x)-\phi(x)}{t},\quad f(x):=\left[\L_{\infty}\phi\right](x)
  \end{align*}
  to deduce that $f_t,f\in L^p(\R^d,\C^N)$ for $t>0$ and $f_t\rightarrow f$ in $L^p(\R^d,\C^N)$ as $t\downarrow 0$. We then directly conclude $\phi\in\D(A_p)$, thus 
  $\S(\R^d,\C^N)\subseteq\D(A_p)$. In particular, we have $A_p\phi:=\L_{\infty}\phi$ for every $\phi\in\S(\R^d,\C^N)$. To justify the application of dominated convergence 
  we must show that
  \begin{enumerate}
    \item[(a)] $f_t(x)\rightarrow f(x)$ pointwise for a.e. $x\in\R^d$ as $t\downarrow 0$,
    \item[(b)] $\left|f_t(x)\right|\leqslant g(x)$ pointwise for a.e. $x\in\R^d$ and for every $0<t\leqslant t_0$,
    \item[(c)] $g\in L^p(\R^d,\R)$,
  \end{enumerate}
  where the function $g$ is constructed during the proof. Before we start to verify the properties (a)--(c) we simplify the term $f_t$, \cite[Proposition 2.2 and 3.2]{Metafune2001}: 
  Since $\phi\in\S(\R^d,\C^N)$, Taylor's formula up to order $2$ yields
  \begin{align*}
    \phi(e^{tS}x-\psi) =& \phi(x) + \sum_{i=1}^{d}\left(e^{tS}x-x-\psi\right)_i D_i\phi(x) \\
                        & +\sum_{i=1}^{d}\sum_{j=1}^{d}\frac{1}{2}\left(e^{tS}x-x-\psi\right)_i\left(e^{tS}x-x-\psi\right)_j D_jD_i\phi(x) \\
                        & +R_{x,2}\left(e^{tS}x-x-\psi\right)
  \end{align*}
  with remainder
  \begin{align*}
    R_{x,2}\left(z-x\right)=\sum_{|\beta|=3}\frac{|\beta|}{\beta!}\left(z-x\right)^{\beta}\int_{0}^{1}(1-\tau)^{|\beta|-1} D^{\beta}\phi\left(x+\tau\left(z-x\right)\right) d\tau 
  \end{align*}
  for $z:=e^{tS}x-\psi$ satisfying 
  \begin{align}
    \label{equ:EstimateRemainderTaylorExpansion}
    \left|R_{x,2}\left(z-x\right)\right|\leqslant C_{\beta}C_{\phi}\left|z-x\right|^3,
  \end{align}
  where $C_{\beta}:=\sum_{|\beta|=3}\frac{1}{\beta!}$ and $C_{\phi}:=\max_{|\beta|=3}\sup_{y\in\R^d}\left|D^{\beta}\phi(y)\right|$. Thus, using 
  \eqref{equ:OrnsteinUhlenbeckSemigroupLp}, the transformation theorem (with transformation $\Phi(\xi)=e^{tS}x-\xi$) and
  \begin{align*}
    K(\psi,t):=H(x,e^{-tS}x-\psi,t)=\left(4\pi t A\right)^{-\frac{d}{2}}\exp\left(-Bt-\left(4tA\right)^{-1}|\psi|^2\right),\,t>0
  \end{align*}
  we obtain
  \begin{align*}
        f_t(x) 
    :=& \frac{\left[T(t)\phi\right](x)-\phi(x)}{t} 
     =  \frac{1}{t}\left[\int_{\R^d}H(x,\xi,t)\phi(\xi)d\xi-\phi(x)\right] \\
     =& \frac{1}{t}\left[\int_{\R^d}K(\psi,t)\phi\left(e^{tS}x-\psi\right)d\psi -\phi(x)\right] \\
     =& \frac{1}{t}\left[\int_{\R^d}K(\psi,t)d\psi-I_N\right]\phi(x)
       +\frac{1}{t}\int_{\R^d}K(\psi,t)\sum_{i=1}^{d}\left(e^{tS}x-x-\psi\right)_i D_i\phi(x) d\psi \\ 
     & +\frac{1}{t}\int_{\R^d}K(\psi,t)\sum_{i=1}^{d}\sum_{j=1}^{d}\frac{1}{2}\left(e^{tS}x-x-\psi\right)_i\left(e^{tS}x-x-\psi\right)_j D_jD_i\phi(x) d\psi \\
     & +\frac{1}{t}\int_{\R^d}K(\psi,t) R_{x,2}\left(e^{tS}x-x-\psi\right) d\psi =: \sum_{i=1}^{4}T_i(x,t),\;t>0.
  \end{align*}
  $T_1$: Using \cite[Lemma 4.5(1)]{Otten2014a} we obtain for every $t>0$
  \begin{align*}
    T_1(x,t) = \frac{1}{t}\left[\int_{\R^d}K(\psi,t)d\psi-I_N\right]\phi(x) = \left(\frac{e^{-Bt}-I_N}{t}\right)\phi(x).
  \end{align*}
  $T_2$: A decomposition of $T_2$ leads to
  \begin{align*}
     & T_2(x,t) = \frac{1}{t}\int_{\R^d}K(\psi,t)\sum_{i=1}^{d}\left(e^{tS}x-x-\psi\right)_i D_i\phi(x)  d\psi \\
    =& \frac{1}{t}\int_{\R^d}K(\psi,t)d\psi\sum_{i=1}^{d}\left(e^{tS}x-x\right)_i D_i\phi(x) 
       -\frac{1}{t}\sum_{i=1}^{d}\int_{\R^d}K(\psi,t)\psi_i d\psi D_i\phi(x) \\
    =& e^{-Bt}\sum_{i=1}^{d}\left(\frac{e^{tS}x-x}{t}\right)_i D_i\phi(x)
  \end{align*}
  for every $t>0$, where we used \cite[Lemma 4.5(1) and (2)]{Otten2014a} for the first and second term, respectively. \\
  $T_3$: Similarly, a decomposition of $T_3$ leads to
  \begin{align*}
     & T_3(x,t) = \frac{1}{t}\int_{\R^d}K(\psi,t)\sum_{i=1}^{d}\sum_{j=1}^{d}\frac{1}{2} \left(e^{tS}x-x-\psi\right)_i\left(e^{tS}x-x-\psi\right)_j D_jD_i\phi(x) d\psi \\
    =& \frac{1}{2t}\sum_{i=1}^{d}\sum_{j=1}^{d}\int_{\R^d}K(\psi,t)\psi_i\psi_j d\psi D_jD_i\phi(x) \\
     & +\frac{1}{2t}\sum_{i=1}^{d}\sum_{j=1}^{d}\int_{\R^d}K(\psi,t) d\psi \left(e^{tS}x-x\right)_i\left(e^{tS}x-x\right)_j D_jD_i\phi(x) \\
     & -\frac{1}{2t}\sum_{i=1}^{d}\sum_{j=1}^{d}\int_{\R^d}K(\psi,t)\left[\left(e^{tS}x-x\right)_i\psi_j+\left(e^{tS}x-x\right)_j\psi_i\right]d\psi D_jD_i\phi(x) \\
    =& \frac{1}{2t}\sum_{i=1}^{d}2te^{-Bt}AD_i^2\phi(x) 
       +\frac{t}{2}e^{-Bt}\sum_{i=1}^{d}\sum_{j=1}^{d}\left(\frac{e^{tS}x-x}{t}\right)_i\left(\frac{e^{tS}x-x}{t}\right)_j D_jD_i\phi(x) \\
     & -\frac{1}{2t}\sum_{i,j=1}^{d}\left[\int_{\R^d}K(\psi,t)\psi_j d\psi(e^{tS}x-x)_i+\int_{\R^d}K(\psi,t)\psi_i d\psi(e^{tS}x-x)_j\right]D_jD_i\phi(x) \\
    =& e^{-Bt}A\triangle\phi(x) + \frac{t}{2}e^{-Bt}\sum_{i=1}^{d}\sum_{j=1}^{d}\left(\frac{e^{tS}x-x}{t}\right)_i\left(\frac{e^{tS}x-x}{t}\right)_j D_jD_i\phi(x)
  \end{align*}
  for every $t>0$, where we used \cite[Lemma 4.5(3), (1) and (2)]{Otten2014a} for the first, second and third term, respectively. \\
  This yields a simplified representation for $f_t(x)$ for every $t>0$ given by
  \begin{align}
    \label{equ:RepresentationOfftxTaylorExpansion}
    \begin{split}
      f_t(x) =& e^{-Bt}A\triangle\phi(x) + e^{-Bt}\sum_{i=1}^{d}\left(\frac{e^{tS}x-x}{t}\right)_i D_i\phi(x) + \left(\frac{e^{-Bt}-I_N}{t}\right)\phi(x) \\
              & +\frac{t}{2}e^{-Bt}\sum_{i=1}^{d}\sum_{j=1}^{d}\left(\frac{e^{tS}x-x}{t}\right)_i\left(\frac{e^{tS}x-x}{t}\right)_j D_jD_i\phi(x) \\
              & +\frac{1}{t}\int_{\R^d}K(\psi,t)R_{x,2}(e^{tS}x-x-\psi)d\psi.
    \end{split}
  \end{align}
  (a): Using $\lim_{t\downarrow 0}\frac{e^{tX}-I}{t}=X$ for $X=-B,S$ and $\lim_{t\downarrow 0}e^{-Bt}=I_N$ we obtain
  \begin{align*}
     \left[A_p\phi\right](x)
    =& \lim_{t\downarrow 0}\frac{\left[T(t)\phi\right](x)-\phi(x)}{t}
    =  \lim_{t\downarrow 0}f_t(x) \\
    =& A\triangle\phi(x) + \lim_{t\downarrow 0}\sum_{i=1}^{d}\left(\frac{e^{tS}x-x}{t}\right)_i D_i\phi(x) +\lim_{t\downarrow 0}\left(\frac{e^{-Bt}-I_N}{t}\right)\phi(x)\\
     & +\lim_{t\downarrow 0}\frac{t}{2}\sum_{i=1}^{d}\sum_{j=1}^{d}\left(\frac{e^{tS}x-x}{t}\right)_i\left(\frac{e^{tS}x-x}{t}\right)_j D_jD_i\phi(x) \\
     & +\lim_{t\downarrow 0}\frac{1}{t}\int_{\R^d}K(\psi,t)R_{x,2}(e^{tS}x-x-\psi)d\psi \\
    =& A\triangle\phi(x) + \left\langle Sx,\nabla\phi(x)\right\rangle - B\phi(x) = \left[\L_{\infty}\phi\right](x) = f(x),
  \end{align*}
  i.e. $f_t(x)\rightarrow f(x)$ pointwise for a.e. $x\in\R^d$ as $t\downarrow 0$, provided that the last limit tends to zeros. This can be seen as follows: 
  Using \eqref{equ:EstimateRemainderTaylorExpansion}, the inequality
  \begin{align}
    \label{equ:TriangleInequalityPsiX}
    \left(\left|e^{tS}x-x\right|+\left|\psi\right|\right)^3 \leqslant 4\left(\left|e^{tS}x-x\right|^3+\left|\psi\right|^3\right),\quad x,\psi\in\R^d,\,t>0
  \end{align}
  and the following integral estimate (for $k=0$ and $k=3$)
  \begin{align}
    \label{equ:IntegralEqualityForKthPowerOfPsi}
    \int_{\R^d}\left|K(\psi,t)\right|\left|\psi\right|^k \leqslant \kappa(Y) \aone^{\frac{d}{2}} e^{-\bzero t} \atwo^{\frac{k}{2}}
                                                                   \frac{\Gamma\left(\frac{d+k}{2}\right)}{\Gamma\left(\frac{d}{2}\right)}t^{\frac{k}{2}},\,t>0,\,k\in\N_0
  \end{align}
  with constants $\aone$, $\atwo$ from \eqref{equ:aminamaxazerobzero} and condition number $\kappa(Y):=\left|Y^{-1}\right|\left|Y\right|$ of the transformation 
  matrix $Y\in\C^{N,N}$ from \eqref{cond:A8B}, we obtain
  \begin{align*}
             & \left|\frac{1}{t}\int_{\R^d}K(\psi,t)R_{x,2}(e^{tS}x-x-\psi)d\psi\right| \\
    \leqslant& \frac{1}{t}\int_{\R^d}\left|K(\psi,t)\right|\left|R_{x,2}(e^{tS}x-x-\psi)\right|d\psi \\
    \leqslant& \frac{C_{\beta}C_{\phi}}{t}\int_{\R^d}\left|K(\psi,t)\right| \left|e^{tS}x-x-\psi\right|^3 d\psi \\
    \leqslant& \frac{C_{\beta}C_{\phi}}{t}\int_{\R^d}\left|K(\psi,t)\right| \left(\left|e^{tS}x-x\right|+\left|\psi\right|\right)^3 d\psi \\
    \leqslant& \frac{4 C_{\beta} C_{\phi}}{t}\left[\int_{\R^d}\left|K(\psi,t)\right|d\psi \left|e^{tS}x-x\right|^3+\int_{\R^d}\left|K(\psi,t)\right|_2\left|\psi\right|^3d\psi\right] \\
            =& 4 C_{\beta} C_{\phi} \kappa(Y) \aone^{\frac{d}{2}}e^{-\bzero t}\left[t^2\left|\frac{e^{tS}x-x}{t}\right|^3+\frac{\Gamma\left(\frac{d+3}{2}\right)}{\Gamma\left(\frac{d}{2}\right)}
               \atwo^{\frac{3}{2}}t^{\frac{1}{2}}\right]
  \end{align*} 
  for every $t>0$. Therefore, using $\lim_{t\downarrow 0}\frac{e^{tS}x-x}{t}=Sx$ once more, the right hand side vanishes for a.e. $x\in\R^d$ as $t\downarrow 0$. Note, 
  that estimate \eqref{equ:TriangleInequalityPsiX} follows from a discrete version of H\"older's inequality. The integral estimate \eqref{equ:IntegralEqualityForKthPowerOfPsi} 
  can be proved in the same way as in \cite[Lemma 4.3(1)]{Otten2014a}. \\
  (b): Given some $\varepsilon>0$ we choose $t_0=t_0(\varepsilon)>0$ such that for every $0<t\leqslant t_0$
  \begin{align}
    \label{equ:ChoiceOfEpsilon}
    \left|\frac{e^{tX}-I}{t}\right|\leqslant|X|+\varepsilon,\quad \left|e^{-Bt}\right|,\,e^{-\bzero t}\leqslant 1+\varepsilon\quad\text{and}\quad t(|S|+\varepsilon)\leqslant\frac{1}{2}
  \end{align}
  for both, $X=S$ and $X=-B$. Then \eqref{equ:RepresentationOfftxTaylorExpansion} yields
  \begin{align}
                \left|f_t(x)\right|
    \leqslant&  \left|e^{-tB}\right| |A|\sum_{i=1}^{d}\left|D_i^2\phi(x)\right| +\left|e^{-tB}\right|\sum_{i=1}^{d}\left|\frac{e^{tS}-I_d}{t}\right|\left|x\right|\left|D_i\phi(x)\right| \nonumber\\
             & +\left|\frac{e^{-Bt}-I_N}{t}\right|\left|\phi(x)\right|+\frac{t}{2}\left|e^{-tB}\right|\sum_{i=1}^{d}\sum_{j=1}^{d}\left|\frac{e^{tS}-I_d}{t}\right|^2\left|x\right|^2\left|D_jD_i\phi(x)\right| \nonumber\\
             & +\left|\frac{1}{t}\int_{\R^d}K(\psi,t)R_{x,2}(e^{tS}x-x-\psi)d\psi\right| \nonumber\\
    \leqslant& (1+\varepsilon)|A|\sum_{i=1}^{d}\left|D_i^2\phi(x)\right| 
               +(1+\varepsilon)\sum_{i=1}^{d}\left(|S|+\varepsilon\right)|x|\left|D_i\phi(x)\right| \label{equ:PointwiseEstimateOfft}\\
             & + \left(|B|+\varepsilon\right)\left|\phi(x)\right| +(1+\varepsilon)\frac{t_0}{2}\sum_{i=1}^{d}\sum_{j=1}^{d}\left(|S|+\varepsilon\right)^2\left|x\right|^2\left|D_jD_i\phi(x)\right| \nonumber\\
             & +\left|\frac{1}{t}\int_{\R^d}K(\psi,t)R_{x,2}(e^{tS}x-x-\psi)d\psi\right| \nonumber
  \end{align}
  for every $0<t\leqslant t_0$. Now the first four terms do not depend on $t$ any more. In particular, since $\phi\in\S(\R^d,\C^N)$, the first four terms belong to 
  $L^p(\R^d,\C^N)$. Therefore, it remains to estimate the last term in such a way, that the bound doesn't depend on $t$ any more and belongs to $L^p(\R^d,\C^N)$ as 
  a function of $x$. For this purpose, we must handle the last term very carefully.  
  \begin{align*}
             & \left|\frac{1}{t}\int_{\R^d}K(\psi,t)R_{x,2}(e^{tS}x-x-\psi)d\psi\right| \\
            =& \left|\frac{1}{t}\int_{\R^d}K(\psi,t)\sum_{|\beta|=3}\frac{|\beta|}{\beta!}\left(z-x\right)^{\beta}
               \int_{0}^{1}(1-\tau)^2 D^{\beta}\phi\left(x+\tau\left(z-x\right)\right) d\tau d\psi\right| \\
    \leqslant& \frac{1}{t}\sum_{|\beta|=3}\frac{|\beta|}{\beta!}\int_{\R^d}\left|K(\psi,t)\right|\left|z-x\right|^{|\beta|}
               \int_{0}^{1}(1-\tau)^2 \left|D^{\beta}\phi\left(x+\tau\left(z-x\right)\right)\right| d\tau d\psi \\
    \leqslant& \frac{4 C_{\beta}}{t}\int_{\R^d}\left|K(\psi,t)\right|\left(\left|e^{tS}x-x\right|^3+\left|\psi\right|^3\right)
               \max_{|\beta|=3}\sup_{\tau\in[0,1]}\left|D^{\beta}\phi\left(x+\tau\left(z-x\right)\right)\right|d\psi,
  \end{align*}
  where $z:=e^{tS}x-\psi$, $C_{\beta}=\sum_{|\beta|=3}\frac{1}{\beta!}$. We now must distinguish between four cases: Let $R\geqslant 1$ be arbitrary.\\
  \textbf{Case 1:} ($|x|\geqslant R$, $|\psi|\leqslant \frac{|x|}{4}$). In this case we use $\phi\in\S(\R^d,\C^N)$. 
  Given $\varepsilon>0$ and choose $t_0=t_0(\varepsilon)>0$ as in \eqref{equ:ChoiceOfEpsilon}. From $|x|\geqslant R$, $|\psi|\leqslant \frac{|x|}{4}$ 
  and \eqref{equ:ChoiceOfEpsilon} we obtain for every $\tau\in[0,1]$
  \begin{align*}
             & \left|x+\tau\left(e^{tS}x-x-\psi\right)\right| 
    \geqslant  \left|x\right| - \tau\left|e^{tS}x-x\right| - \tau\left|\psi\right|
    \geqslant  \left|x\right| - \left|e^{tS}x-x\right| - \left|\psi\right| \\
    \geqslant& \left(1-t\left|\frac{e^{tS}-I_d}{t}\right|\right)\left|x\right| - \left|\psi\right|
    \geqslant  \left(1-t\left(|S|+\varepsilon\right)\right)\left|x\right| - \left|\psi\right| 
    \geqslant  \frac{\left|x\right|}{2} - \left|\psi\right| 
    \geqslant  \frac{\left|x\right|}{4}.
  \end{align*}
  Moreover, since $\phi\in\S(\R^d,\C^N)$, we have
  \begin{align*}
    \forall\,\alpha,\beta\in\N_0^d\;\exists\,C_{\alpha,\beta}>0:\;\left|y^{\alpha} D^{\beta}\phi(y)\right|\leqslant C_{\alpha,\beta}\;\forall\,y\in\R^d,
  \end{align*}
  and therefore, for arbitrary $R_0>0$ it holds
  \begin{align}
    \label{equ:PhiRapidlyDecaying}
    \left|D^{\beta}\phi(y)\right|\leqslant C_{\alpha,\beta}\left|y\right|^{-\left|\alpha\right|}\;\forall\,y\in\R^d,\,|y|\geqslant R_0.
  \end{align}
  Thus, using \eqref{equ:IntegralEqualityForKthPowerOfPsi} with $k=0$ and $k=3$, we obtain, $z:=e^{tS}x-\psi$
  \begin{align*}
             & \frac{4 C_{\beta}}{t}\int_{|\psi|\leqslant\frac{|x|}{4}}\left|K(\psi,t)\right|\left(\left|e^{tS}x-x\right|^3+\left|\psi\right|^3\right)
               \max_{|\beta|=3\atop\tau\in[0,1]}\left|D^{\beta}\phi\left(x+\tau\left(z-x\right)\right)\right|d\psi \\
    \leqslant& 4 C_{\beta}\int_{|\psi|\leqslant\frac{|x|}{4}}\left|K(\psi,t)\right|\left(t^2\left|\frac{e^{tS}-I_d}{t}\right|^3|x|^3+\frac{1}{t}\left|\psi\right|^3\right) \\
             & \cdot\max_{|\beta|=3}\sup_{\tau\in[0,1]}C_{\alpha,\beta}\left|x+\tau\left(e^{tS}x-x-\psi\right)\right|^{-\left|\alpha\right|}d\psi \\
    \leqslant& 4 C_{\beta}\int_{|\psi|\leqslant\frac{|x|}{4}}\left|K(\psi,t)\right|\left(t^2\left(|S|+\varepsilon\right)^3|x|^3+\frac{1}{t}\left|\psi\right|^3\right)
               \max_{|\beta|=3}C_{\alpha,\beta}4^{\left|\alpha\right|}\left|x\right|^{-\left|\alpha\right|}d\psi \\
    \leqslant& 4^{|\alpha|+1} C_{\beta}C_{\phi}\bigg[t^2\left(|S|+\varepsilon\right)^3\left|x\right|^{-\left(|\alpha|-3\right)}\int_{\R^d}\left|K(\psi,t)\right|d\psi \\
             & +\frac{1}{t}\left|x\right|^{-\left|\alpha\right|}\int_{\R^d}\left|K(\psi,t)\right|\left|\psi\right|^3 d\psi\bigg] \\
    \leqslant& 4^{|\alpha|+1} C_{\beta}C_{\phi} \kappa(Y) \aone^{\frac{d}{2}}e^{-\bzero t}\left[t^2\left(|S|+\varepsilon\right)^3\left|x\right|^{-\left(|\alpha|-3\right)}
               +t^{\frac{1}{2}}\left|x\right|^{-\left|\alpha\right|}\frac{\Gamma\left(\frac{d+3}{2}\right)}{\Gamma\left(\frac{d}{2}\right)}
               \atwo^{\frac{3}{2}}\right] \\
    \leqslant& 4^{|\alpha|+1} C_{\beta}C_{\phi} \kappa(Y) \aone^{\frac{d}{2}}(1+\varepsilon)\left[t_0^2\left(|S|+\varepsilon\right)^3
               +t_0^{\frac{1}{2}}\frac{1}{R^3}\frac{\Gamma\left(\frac{d+3}{2}\right)}{\Gamma\left(\frac{d}{2}\right)}
               \atwo^{\frac{3}{2}}\right]\left|x\right|^{-\left(|\alpha|-3\right)} =: h_1(x)
  \end{align*}
  for every $0<t\leqslant t_0$ and $|x|\geqslant R$, where $C_{\phi}:=\max_{|\beta|=3}C_{\alpha,\beta}$. Here, we must choose $|\alpha|>\frac{d}{p}+3$ to guarantee the 
  $L^p$--integrability of $h_1(x)$ in $|x|\geqslant R$, since
  \begin{align}
    \label{equ:Aussenraumintegral}
    \int_{a}^{\infty}s^{-n}ds = \frac{a^{1-n}}{n-1},\;n\in\N\text{ with }n>1,\;a\in\R\text{ with }a>0,
  \end{align}
  and
  \begin{align*}
    \int_{|x|\geqslant R}|x|^{-(|\alpha|-3)p}dx = \frac{2\pi^{\frac{d}{2}}}{\Gamma\left(\frac{d}{2}\right)}\int_{R}^{\infty}r^{-\left((|\alpha|-3)p-(d-1)\right)}dr.
  \end{align*}
  \textbf{Case 2:} ($|x|\geqslant R$, $|\psi|\geqslant\frac{|x|}{4}$). In this case we must use that $K(\cdot,t)\in\S(\R^d,\C^N)$. First of all, using $e^{-s^2}\in\S(\R,\R)$, i.e.
  \begin{align*}
    \forall\,m\in\N_0\;\forall\,R>0\;\exists\,C_{R,m}>0:\;\left|e^{-s^2}\right|\leqslant C_{R,m}\left|s\right|^{-m}\;\forall\,|s|\geqslant R,
  \end{align*}
  \eqref{equ:Aussenraumintegral} and the constants $\azero$, $\amin$ and $\amax$ from \eqref{equ:aminamaxazerobzero}, we deduce
  \begin{align*}
             & \int_{|\psi|\geqslant\frac{|x|}{4}}\left|K(\psi,t)\right|\left|\psi\right|^k d\psi
    \leqslant  \kappa(Y)\int_{|\psi|\geqslant\frac{|x|}{4}}\left(4\pi t\amin\right)^{-\frac{d}{2}}e^{-\bzero t-\frac{\azero}{4t\amax^2}|\psi|^2}\left|\psi\right|^k d\psi \\
            =& \kappa(Y)\left(4\pi t\amin\right)^{-\frac{d}{2}}\frac{2\pi^{\frac{d}{2}}}{\Gamma\left(\frac{d}{2}\right)}e^{-\bzero t}
               \int_{\frac{|x|}{4}}^{\infty}r^{d-1}e^{-\frac{\azero}{4t\amax^2}r^2}r^k dr \\
            =& \kappa(Y)\left(\frac{\amax^2}{\amin\azero}\right)^{\frac{d+k}{2}}\left(4t\amin\right)^{\frac{k}{2}}\frac{2}{\Gamma\left(\frac{d}{2}\right)}e^{-\bzero t}
               \int_{\left(\frac{\azero}{4t\amax^2}\right)^{\frac{1}{2}}\frac{|x|}{4}}^{\infty}s^{d-1}e^{-s^2}s^k ds \\
    \leqslant& \kappa(Y)\left(\frac{\amax^2}{\amin\azero}\right)^{\frac{d+k}{2}}\left(4t\amin\right)^{\frac{k}{2}}\frac{2}{\Gamma\left(\frac{d}{2}\right)}e^{-\bzero t}
               \int_{\left(\frac{\azero}{4t\amax^2}\right)^{\frac{1}{2}}\frac{|x|}{4}}^{\infty}s^{d-1+k-m} ds \\
            =& \frac{2 \kappa(Y) \aone^{\frac{d+k}{2}}\left(4t\amin\right)^{\frac{k}{2}} }{(m-d-k)\Gamma\left(\frac{d}{2}\right)}e^{-\bzero t}
               \left[\left(\frac{1}{\atwo t}\right)^{\frac{1}{2}}\frac{|x|}{4}\right]^{-(m-d-k)}
            =: C t^{\frac{m-d}{2}} e^{-\bzero t} \left|x\right|^{-(m-d-k)}
  \end{align*}
  whenever $m\geqslant d+k+1$. Therefore, we obtain for $0<t\leqslant t_0$, $z:=e^{tS}x-\psi$
  \begin{align*}
             & \frac{4 C_{\beta}}{t}\int_{|\psi|\geqslant\frac{|x|}{4}}\left|K(\psi,t)\right|\left(\left|e^{tS}x-x\right|^3+\left|\psi\right|^3\right)
               \max_{|\beta|=3\atop\tau\in[0,1]}\left|D^{\beta}\phi\left(x+\tau\left(z-x\right)\right)\right|d\psi \\
    \leqslant& \frac{4 C_{\beta} C_{\phi}}{t}\int_{|\psi|\geqslant\frac{|x|}{4}}\left|K(\psi,t)\right|\left(t^3\left|\frac{e^{tS}-I_d}{t}\right|^3 4^3+1\right)|\psi|^3 d\psi \\
    \leqslant& \frac{4 C_{\beta} C_{\phi}}{t}\left(4^3 t_0^3(|S|+\varepsilon)^3+1\right)\int_{|\psi|\geqslant\frac{|x|}{4}}\left|K(\psi,t)\right|\left|\psi\right|^3 d\psi \\
    \leqslant& 4 C_{\beta} C_{\phi}\left(4^3 t_0^3(|S|+\varepsilon)^3+1\right)C t^{\frac{m-d-2}{2}} e^{-\bzero t} \left|x\right|^{-(m-d-3)} \\
    \leqslant& 4 C_{\beta} C_{\phi}\left(4^3 t_0^3(|S|+\varepsilon)^3+1\right)C t_0^{\frac{m-d-2}{2}} (1+\varepsilon) \left|x\right|^{-(m-d-3)}=:h_2(x)
  \end{align*}
  for every $0<t\leqslant t_0$ and $|x|\geqslant R$, where $C_{\phi}:=\max_{|\beta|=3}\sup_{y\in\R^d}\left|D^{\beta}\phi(y)\right|$. Here, we must choose $m>\frac{d}{p}+d+3$ 
  to guarantee $L^p$-integrability in $|x|\geqslant R$. \\
  \textbf{Case 3:} ($|x|\leqslant R$, $|\psi|\geqslant\frac{|x|}{4}$). In this case we use that Schwartz functions, as e.g. $\phi$ and their derivatives, are bounded on compact sets, e.g. 
  on $B_R(0)$. Using \eqref{equ:IntegralEqualityForKthPowerOfPsi} with $k=3$, we obtain with $z:=e^{tS}x-\psi$
  \begin{align*}
             & \frac{4 C_{\beta}}{t}\int_{|\psi|\geqslant\frac{|x|}{4}}\left|K(\psi,t)\right|\left(\left|e^{tS}x-x\right|^3+\left|\psi\right|^3\right)
               \max_{|\beta|=3\atop\tau\in[0,1]}\left|D^{\beta}\phi\left(x+\tau\left(z-x\right)\right)\right|d\psi \\
    \leqslant& \frac{4 C_{\beta}C_{\phi}}{t}\int_{|\psi|\geqslant\frac{|x|}{4}}\left|K(\psi,t)\right|\left(t^3\left|\frac{e^{tS}-I_d}{t}\right|^3|x|^3+\left|\psi\right|^3\right) d\psi \\
    \leqslant& \frac{4 C_{\beta}C_{\phi}}{t}\left(4^3 t_0^3(|S|+\varepsilon)^3+1\right)\int_{|\psi|\geqslant\frac{|x|}{4}}\left|K(\psi,t)\right|\left|\psi\right|^3 d\psi \\
    \leqslant& \frac{4 C_{\beta}C_{\phi}}{t}\left(4^3 t_0^3(|S|+\varepsilon)^3+1\right)\int_{\R^d}\left|K(\psi,t)\right|\left|\psi\right|^3 d\psi \\
    \leqslant& 4 C_{\beta}C_{\phi}\left(4^3 t_0^3(|S|+\varepsilon)^3+1\right)\kappa(Y)\aone^{\frac{d}{2}}e^{-\bzero t}\atwo^{\frac{3}{2}}
               \frac{\Gamma\left(\frac{d+3}{2}\right)}{\Gamma\left(\frac{d}{2}\right)}t^{\frac{1}{2}} \\
    \leqslant& 4 C_{\beta}C_{\phi}\left(4^3 t_0^3(|S|+\varepsilon)^3+1\right)\kappa(Y)\aone^{\frac{d}{2}}(1+\varepsilon)\atwo^{\frac{3}{2}}
               \frac{\Gamma\left(\frac{d+3}{2}\right)}{\Gamma\left(\frac{d}{2}\right)}t_0^{\frac{1}{2}} =: h_3
  \end{align*}
  for every $0<t\leqslant t_0$ and $|x|\leqslant R$, where $C_{\phi}:=\max_{|\beta|=3}\sup_{y\in\R^d}\left|D^{\beta}\phi(y)\right|$. \\
  \textbf{Case 4:}  ($|x|\leqslant R$, $|\psi|\leqslant\frac{|x|}{4}$). This case is similar to case 3. Using \eqref{equ:ChoiceOfEpsilon} and \eqref{equ:IntegralEqualityForKthPowerOfPsi} 
  with $k=0$ and $k=3$, we obtain for $z:=e^{tS}x-\psi$
  \begin{align*}
             & \frac{4 C_{\beta}}{t}\int_{|\psi|\leqslant\frac{|x|}{4}}\left|K(\psi,t)\right|\left(\left|e^{tS}x-x\right|^3+\left|\psi\right|^3\right)
               \max_{|\beta|=3\atop\tau\in[0,1]}\left|D^{\beta}\phi\left(x+\tau\left(z-x\right)\right)\right|d\psi \\
    \leqslant& 4 C_{\beta}C_{\phi}\int_{|\psi|\leqslant\frac{|x|}{4}}\left|K(\psi,t)\right|
               \left(t^2\left|\frac{e^{tS}-I_d}{t}\right|^3|x|^3+\frac{1}{t}\left|\psi\right|^3\right) d\psi \\
    \leqslant& 4 C_{\beta}C_{\phi}\left[t_0^2\left(|S|+\varepsilon\right)^3 R^3\int_{\R^d}\left|K(\psi,t)\right|d\psi
               +\frac{1}{t}\int_{\R^d}\left|K(\psi,t)\right|\left|\psi\right|^3 d\psi\right] \\
    \leqslant& 4 C_{\beta}C_{\phi}\kappa(Y)\aone^{\frac{d}{2}}e^{-\bzero t}\left[t_0^2\left(|S|+\varepsilon\right)^3R^3+\frac{\Gamma\left(\frac{d+3}{2}\right)}{\Gamma\left(\frac{d}{2}\right)}
               \atwo^{\frac{3}{2}}t^{\frac{1}{2}}\right] \\
    \leqslant& 4 C_{\beta}C_{\phi}\kappa(Y)\aone^{\frac{d}{2}}(1+\varepsilon)\left[t_0^2\left(|S|+\varepsilon\right)^3R^3+\frac{\Gamma\left(\frac{d+3}{2}\right)}{\Gamma\left(\frac{d}{2}\right)}
               \atwo^{\frac{3}{2}}t_0^{\frac{1}{2}}\right] =: h_4
  \end{align*}
  for every $0<t\leqslant t_0$ and $|x|\leqslant R$, where $C_{\phi}:=\max_{|\beta|=3}\sup_{y\in\R^d}\left|D^{\beta}\phi(y)\right|$. \\
  Now choosing $|\alpha|=\frac{d}{p}+4$ and $m=\frac{d}{p}+d+4$ and defining
  \begin{align*}
    h:\R^d\rightarrow\R,\quad h(x):=\begin{cases}\max\{h_3,h_4\}&\text{, }|x|\leqslant R\\\max\{h_1(x),h_2(x)\}&\text{, }|x|\geqslant R\end{cases}
  \end{align*}
  we deduce from \eqref{equ:PointwiseEstimateOfft}
  \begin{align*}
               \left|f_t(x)\right|
    \leqslant&  (1+\varepsilon)|A|\sum_{i=1}^{d}\left|D_i^2\phi(x)\right| + (1+\varepsilon)\sum_{i=1}^{d}\left(|S|+\varepsilon\right)|x|\left|D_i\phi(x)\right| 
               + (|B|+\varepsilon)\left|\phi(x)\right|\\
             & +(1+\varepsilon)\frac{t_0}{2}\sum_{i=1}^{d}\sum_{j=1}^{d}\left(|S|+\varepsilon\right)^2\left|x\right|^2\left|D_jD_i\phi(x)\right| + h(x) =: g(x)
  \end{align*}
  for every $0<t\leqslant t_0$. \\
  (c): Using the decomposition
  \begin{align*}
    \left\|g\right\|_{L^p(\R^d,\C^N)}^p = \int_{|x|\geqslant R}\left|g(x)\right|^p dx + \int_{|x|\leqslant R}\left|g(x)\right|^p dx
  \end{align*}
  and \eqref{equ:PhiRapidlyDecaying} since $\phi\in\S(\R^d,\C^N)$, we deduce $g\in L^p(\R^d,\R)$ and the application of dominated convergence is justified. \\ 
  \enum{3}: The proof can partially be found in \cite[II.2.13]{EngelNagel2000}. Let $\phi\in\S:=\S(\R^d,\C^N)$. \\
  1. Recall the ($d$-dimensional) diffusion semigroup $\left(G(t,0)\right)_{t\geqslant 0}$
  \begin{align*}
    \left[G(t,0)\phi\right](y) :=& \int_{\R^d}H(e^{-tS}y,\xi,t)\phi(\xi)d\xi \\
                                =& \int_{\R^d}\left(4\pi tA\right)^{-\frac{d}{2}}\exp\left(-Bt-\left(4tA\right)^{-1}\left|y-\xi\right|^2\right)\phi(\xi)d\xi,\,t>0
  \end{align*}
  and recall the kernel $K$
  \begin{align*}
    K(\psi,t) = \left(4\pi t A\right)^{-\frac{d}{2}}\exp\left(-Bt-\left(4 t A\right)^{-1}\left|\psi\right|^2\right),
  \end{align*}
  which satisfies $K(\cdot,t)\in\S$ for every $t>0$, see \cite[VI.5.3 Example]{EngelNagel2000}. Then we have
  \begin{align*}
    \left[G(t,0)\phi\right](x)=\left[K(t)*\phi\right](x)
  \end{align*}
  and hence
  \begin{align}
    \label{equ:RelationOfTToDiffusionSemigroup}
    \left[T(t)\phi\right](x) = \left[G(t,0)\phi\right](e^{tS}x) = \left[K(t)*\phi\right](e^{tS}x).
  \end{align}
  2. First we show that
  \begin{align}
    \label{equ:FourierTransformRotationalArgument}
    \left[\F \phi(e^{tS}\cdot)\right](\xi) = \left[\F \phi(\cdot)\right](e^{tS}\xi)\quad\forall\,\phi\in\S,
  \end{align}
  where $\F \phi$ denotes the Fourier transform of $\phi\in\S$. From the transformation theorem (with transformation $\Phi(x)=e^{tS}x$), 
  \eqref{cond:A5} and the definition of the Fourier transform \cite[VI.5.2 Definition]{EngelNagel2000} we obtain
  \begin{align*}
        \left[\F\phi(e^{tS}\cdot)\right](\xi)
    :=& \int_{\R^d}e^{-i\left\langle x,\xi\right\rangle}\phi(e^{tS}x)dx
     =  \int_{\R^d}e^{-i\left\langle e^{-tS}y,\xi\right\rangle}\phi(y)dy \\
     =& \int_{\R^d}e^{-i\left\langle y,e^{tS}\xi\right\rangle}\phi(y)dy
     =  \left[\F\phi(\cdot)\right](e^{tS}\xi).
  \end{align*}
  3. Next we show that
  \begin{align}
    \label{equ:DecompositionByFourierTransform}
    \left[\F\left[T(t)\phi\right](\cdot)\right](\xi) = \left[\F K(\cdot,t)\right](e^{tS}\xi)\cdot \left[\F\phi\right](e^{tS}\xi).
  \end{align}
  From \eqref{equ:RelationOfTToDiffusionSemigroup} and \eqref{equ:FourierTransformRotationalArgument} we obtain for every $t>0$
  \begin{align*}
      \left[\F\left[T(t)\phi\right](\cdot)\right](\xi)
    =& \left[\F\left[K(t)*\phi\right](e^{tS}\cdot)\right](\xi)
    =  \left[\F\left[K(t)*\phi\right](\cdot)\right](e^{tS}\xi) \\
    =& \left[\left(\F K(t)\right)(\cdot)\cdot \left(\F\phi\right)(\cdot)\right](e^{tS}\xi)
    =  \left[\F K(t)\right](e^{tS}\xi) \cdot \left[\F\phi\right](e^{tS}\xi).
  \end{align*}
  4. Since $\phi\in\S$ it follows that $\left[\F\phi\right](\cdot)\in\S$ and thus $\left[\F\phi\right](e^{tS}\cdot)\in\S$ for every $t\geqslant 0$. Analogously, since $K(\cdot,t)\in\S$ for every $t>0$ 
  it follows that $\left[\F K(t)\right](\cdot)\in\S$ and hence $\left[\F K(t)\right](e^{tS}\cdot)\in\S$ for every $t>0$. Using \eqref{equ:DecompositionByFourierTransform} we deduce that
  $\left[\F\left[T_0(t)\phi\right](\cdot)\right](\cdot)\in\S$ for every $t>0$ (since $\S$ is closed under pointwise multiplication), i.e. $\F\left(T(t)\S\right)\subseteq\S$ for every $t>0$ and 
  hence $T(t)\S\subseteq\F^{-1}(\S)=\S$ for every $t>0$, see \cite[II.7.7 The inversion theorem]{Rudin1991}. The case $t=0$ follows directly from the definition of $T$ in 
  \eqref{equ:OrnsteinUhlenbeckSemigroupLp}, that gives $T(0)\S=\S$.
\end{proof}

\begin{remark}
  Indeed, one can show that also $C_{\mathrm{c}}^{\infty}(\R^d,\C^N)$ is a core for $(A_p,\D(A_p))$, but the arguments are slightly different. Since $C_{\mathrm{c}}^{\infty}(\R^d,\C^N)$ 
  is not invariant under the semigroup $\left(T(t)\right)_{t\geqslant 0}$, we cannot apply \cite[II.1.7 Proposition]{EngelNagel2000}. In this case one must perform 
  a direct proof as in \cite[Proposition 3.2]{Metafune2001}.
\end{remark}

%
%
\sect{Resolvent estimates for formal Ornstein-Uhlenbeck operators in \texorpdfstring{$L^p(\R^d,\C^N)$}{Lp(Rd,CN)}}
\label{sec:TheOrnsteinUhlenbeckOperatorInLp}

In this section we prove resolvent estimates for the operator
\begin{align*}
  \left[\L_{\infty} v\right](x) = A\triangle v(x) + \left\langle Sx,\nabla v(x)\right\rangle - Bv(x),\,x\in\R^d
\end{align*}
in $L^p(\R^d,\C^N)$ for $1<p<\infty$. Defining the formal Ornstein-Uhlenbeck operator
\begin{align*}
  \left[\L_{0} v\right](x) = A\triangle v(x) + \left\langle Sx,\nabla v(x)\right\rangle,\,x\in\R^d,
\end{align*}
we observe that the operator $\L_{\infty}=\L_0-B$ is a constant coefficient perturbation of $\L_0$. Therefore, we equip the operator $\L_{\infty}$ with the domain
\begin{align*}
  \D^p_{\mathrm{loc}}(\L_{0}):=&\left\{v\in W^{2,p}_{\mathrm{loc}}(\R^d,\C^N)\cap L^p(\R^d,\C^N)\mid A\triangle v+\left\langle S\cdot,\nabla v\right\rangle\in L^p(\R^d,\C^N)\right\} \\
                                   =&\left\{v\in W^{2,p}_{\mathrm{loc}}(\R^d,\C^N)\cap L^p(\R^d,\C^N)\mid \L_0 v\in L^p(\R^d,\C^N)\right\}.
\end{align*}
Note that the domain $\D^p_{\mathrm{loc}}(\L_{0})$ of $\L_{\infty}$ does not depend on the matrix $B$.

The following lemma states that $\L_{\infty}:L^p(\R^d,\C^N)\supseteq\D^p_{\mathrm{loc}}(\L_0)\rightarrow L^p(\R^d,\C^N)$ is a closed operator in $L^p(\R^d,\C^N)$ for every $1<p<\infty$. 
This allows us to define the resolvent of $\L_{\infty}$. A proof for the real-valued case, which is based on a local elliptic $L^p$-regularity result 
from \cite[Theorem 9.11]{GilbargTrudinger2010}, can be found in \cite[Lemma 3.1]{MetafunePallaraWacker2002}. The following lemma extends \cite[Lemma 5.11]{Otten2014} 
to general matrices $B\in\C^{N,N}$.

\begin{lemma}\label{lem:ClosednessOfL0}
  Let the assumption \eqref{cond:A3} be satisfied for $\K=\C$, then the operator $\L_{\infty}:L^p(\R^d,\C^N)\supseteq\D^p_{\mathrm{loc}}(\L_0)\rightarrow L^p(\R^d,\C^N)$ 
  is closed in $L^p(\R^d,\C^N)$ for $1<p<\infty$.
\end{lemma}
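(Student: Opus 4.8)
The plan is to reduce everything to the formal Ornstein--Uhlenbeck operator $\L_0 = \L_\infty + B$ and then to invoke an interior $L^p$-regularity estimate. Since $B\in\C^{N,N}$ acts as a bounded multiplication operator on $L^p(\R^d,\C^N)$ and since $\D^p_{\mathrm{loc}}(\L_0)$ does not depend on $B$, it suffices to show that $\L_0\colon L^p(\R^d,\C^N)\supseteq\D^p_{\mathrm{loc}}(\L_0)\to L^p(\R^d,\C^N)$ is closed. Indeed, if $(v_n)_n\subseteq\D^p_{\mathrm{loc}}(\L_0)$ with $v_n\to v$ and $\L_\infty v_n\to f$ in $L^p(\R^d,\C^N)$, then $Bv_n\to Bv$ in $L^p$, hence $\L_0 v_n=\L_\infty v_n+Bv_n\to f+Bv=:g$ in $L^p(\R^d,\C^N)$; and once we know $v\in\D^p_{\mathrm{loc}}(\L_0)$ with $\L_0 v=g$, we immediately obtain $\L_\infty v=\L_0 v-Bv=g-Bv=f$.

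The analytic core is the following interior estimate: for all $0<r<R$ there is a constant $C=C(r,R,p,d,A,S)>0$ such that
\begin{align}
  \|w\|_{W^{2,p}(B_r,\C^N)}\leqslant C\left(\|\L_0 w\|_{L^p(B_R,\C^N)}+\|w\|_{L^p(B_R,\C^N)}\right)\qquad\forall\,w\in W^{2,p}_{\mathrm{loc}}(\R^d,\C^N),
  \label{equ:InteriorEstimateClosedness}
\end{align}
where $B_\rho$ denotes the ball of radius $\rho$ centered at the origin. For the scalar real-valued case this is exactly \cite[Lemma 3.1]{MetafunePallaraWacker2002}, deduced there from the interior $L^p$-estimate for second order elliptic equations \cite[Theorem 9.11]{GilbargTrudinger2010}. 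In the present complex vector-valued setting \eqref{equ:InteriorEstimateClosedness} follows along the same lines: on $B_R$ the second order part of $\L_0$ is $A\triangle$ with a \emph{constant} matrix $A$ which is invertible by \eqref{cond:A3} (recall \eqref{cond:A3}$\Rightarrow$\eqref{cond:A2}, so $A^{-1}$ exists), and the scalar Laplacian acts componentwise. Hence the Calderón--Zygmund estimate $\|D^2 w\|_{L^p(B_\rho)}\leqslant C(\|\triangle w\|_{L^p(B_{\rho'})}+\|w\|_{L^p(B_{\rho'})})$ applied to each component, together with $\triangle w=A^{-1}\bigl(\L_0 w-\langle S\cdot,\nabla w\rangle\bigr)$ and the interpolation inequality $\|\nabla w\|_{L^p(B_\rho)}\leqslant\varepsilon\|D^2 w\|_{L^p(B_{\rho'})}+C_\varepsilon\|w\|_{L^p(B_{\rho'})}$, yields \eqref{equ:InteriorEstimateClosedness} after the usual iteration over nested balls $B_r\Subset\dots\Subset B_R$ (the first order coefficients $x\mapsto(Sx)_i$ are bounded by $|S|R$ on $B_R$). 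Alternatively one may invoke directly the interior $L^p$-estimate for the strongly elliptic constant-coefficient system $A\triangle$ in the sense of Agmon--Douglis--Nirenberg, strong ellipticity being guaranteed by \eqref{cond:A3} via $\Re\langle w,Aw\rangle|\xi|^2\geqslant\beta_A|w|^2|\xi|^2$.

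Granting \eqref{equ:InteriorEstimateClosedness}, the closedness follows in a standard way. Applying \eqref{equ:InteriorEstimateClosedness} to $w=v_n-v_m$ and using that $v_n\to v$ and $\L_0 v_n\to g$ in $L^p(\R^d,\C^N)$, the right-hand side tends to $0$, so $(v_n)_n$ is Cauchy in $W^{2,p}(B_r,\C^N)$ for every $r>0$; its limit there must coincide with $v$ (since $v_n\to v$ already in $L^p$), whence $v\in W^{2,p}_{\mathrm{loc}}(\R^d,\C^N)$ and $v_n\to v$ in $W^{2,p}_{\mathrm{loc}}$. Consequently $\L_0 v_n\to\L_0 v$ in $L^p_{\mathrm{loc}}(\R^d,\C^N)$ (the coefficients of $\L_0$ are bounded on compact sets), while simultaneously $\L_0 v_n\to g$ in $L^p(\R^d,\C^N)$; comparing the limits gives $\L_0 v=g$ almost everywhere. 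Since $g\in L^p(\R^d,\C^N)$, this shows $v\in\D^p_{\mathrm{loc}}(\L_0)$ with $\L_0 v=g$, and therefore $\L_\infty v=\L_0 v-Bv=g-Bv=f$. Hence $\L_\infty$ is closed on $\D^p_{\mathrm{loc}}(\L_0)$ for every $1<p<\infty$.

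The main obstacle is precisely the interior estimate \eqref{equ:InteriorEstimateClosedness}: in the scalar case it is quotable off the shelf, but in the complex vector-valued case one has to be slightly careful, either reducing to the scalar Laplacian componentwise (which works here because the leading coefficient matrix $A$ is constant) or appealing to elliptic-systems regularity theory. In both routes the unbounded drift $\langle Sx,\nabla v\rangle$ is a harmless lower-order perturbation with coefficients in $L^\infty_{\mathrm{loc}}$, and, as the hypotheses of the lemma already suggest, no use of the skew-symmetry of $S$ is required.
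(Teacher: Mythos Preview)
Your proof is correct and follows essentially the same route as the paper: both arguments rest on the interior $L^p$-elliptic estimate (the paper cites \cite[Theorem 9.11]{GilbargTrudinger2010} directly, you phrase it as \eqref{equ:InteriorEstimateClosedness}) to show that $(v_n)$ is Cauchy in $W^{2,p}_{\mathrm{loc}}$, and then identify the limit. Your explicit reduction to $\L_0$ via the boundedness of $B$ and your more careful discussion of how to pass from the scalar real estimate to the complex vector-valued system (componentwise via $A^{-1}$, or via Agmon--Douglis--Nirenberg using \eqref{cond:A3}) are welcome clarifications, but the underlying strategy is identical.
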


\begin{proof}
  Let $\left(v_n\right)_{n\in\N}$ be such that $v_n\in\D^p_{\mathrm{loc}}(\L_0)$ converges to $v\in L^p(\R^d,\C^N)$ and $\L_{\infty} v_n$ converges to $u\in L^p(\R^d,\C^N)$ 
  both w.r.t. $\left\|\cdot\right\|_{L^p}$. To show the closedness of $\L_{\infty}$ we must verify that $v\in\D^p_{\mathrm{loc}}(\L_0)$ and $\L_{\infty} v=u$ in $L^p(\R^d,\C^N)$. 

  Let $\Omega\subseteq\R^d$ be an open bounded set. From $\L_{\infty} v_n\rightarrow u$ in $L^p(\R^d,\C^N)$ we infer that $\L_{\infty} v_n|_{\Omega}\rightarrow u|_{\Omega}$ 
  in $L^p(\Omega,\C^N)$ and therefore, $\left(\L_{\infty} v_n|_{\Omega}\right)_{n\in\N}$ is a Cauchy sequence in $L^p(\Omega,\C^N)$. Analogously, we deduce from 
  $v_n\rightarrow v$ in $L^p(\R^d,\C^N)$ that $v_n|_{\Omega}\rightarrow v|_{\Omega}$ in $L^p(\Omega,\C^N)$ and thus $\left(v_n|_{\Omega}\right)_{n\in\N}$ 
  is a Cauchy sequence in $L^p(\Omega,\C^N)$. Since $Sx$ is bounded in $\Omega$ by the boundedness of $\Omega$, \cite[Theorem 9.11]{GilbargTrudinger2010} yields 
  that for every $\Omega'\subset\subset\Omega$ there exists some constant $C=C(\Omega',\Omega,p,A,S,d)>0$ such that
  \begin{align*}
             & \left\|v_n|_{\Omega'}-v_m|_{\Omega'}\right\|_{W^{2,p}(\Omega',\C^N)} \\
    \leqslant& C\left(\left\|v_n|_{\Omega}-v_m|_{\Omega}\right\|_{L^p(\Omega,\C^N)}+\left\|\L_{\infty} v_n|_{\Omega}-\L_{\infty} v_m|_{\Omega}\right\|_{L^p(\Omega,\C^N)}\right)
    \leqslant \varepsilon.
  \end{align*}
  Therefore, $\left(v_n|_{\Omega'}\right)_{n\in\N}$ is a Cauchy sequence in $W^{2,p}(\Omega',\C^N)$ and consequently, there exists some $v^{\Omega'}\in W^{2,p}(\Omega',\C^N)$ 
  such that $v_n|_{\Omega'}\rightarrow v^{\Omega'}$ in $W^{2,p}(\Omega',\C^N)$ and hence in particular in $L^p(\Omega',\C^N)$. Moreover, since $v_n\rightarrow v$ 
  in $L^p(\R^d,\C^N)$ we deduce $v_n|_{\Omega'}\rightarrow v|_{\Omega'}$ in $L^p(\Omega',\C^N)$. Therefore, $v^{\Omega'}=v|_{\Omega'}$ in $L^p(\Omega',\C^N)$ and 
  we further infer that $v_n|_{\Omega'}\rightarrow v|_{\Omega'}$ in $W^{2,p}(\Omega',\C^N)$ and $v|_{\Omega'}\in W^{2,p}(\Omega',\C^N)$. 
  
  Now, by the arbitrariness of $\Omega$ and $\Omega'$ we deduce that $v\in W^{2,p}_{\mathrm{loc}}(\R^d,\C^N)$. Moreover, $v_n|_{\Omega'}\rightarrow v|_{\Omega'}\in W^{2,p}(\Omega',\C^N)$ 
  implies $\L_{\infty} v_n|_{\Omega'}\rightarrow\L_{\infty} v|_{\Omega'}$ in $L^p(\Omega',\C^N)$ and hence $\L_{\infty} v|_{\Omega'}=u|_{\Omega'}$ in $L^p(\Omega',\C^N)$. By arbitrariness 
  of $\Omega$ and $\Omega'$ we deduce $\L_{\infty} v=u\in L^p(\R^d,\C^N)$ and thus $v\in\D^p_{\mathrm{loc}}(\L_0)$.
\end{proof}

Since $(\L_{\infty},\D^p_{\mathrm{loc}}(\L_0))$ is a closed operator on the Banach space $L^p(\R^d,\C^N)$ for every $1<p<\infty$, we have the following notion
\begin{align*}
     \sigma(\L_{\infty}) :=& \left\{\lambda\in\C\mid \lambda I-\L_{\infty}\text{ is not bijective}\right\} &&\text{\begriff{spectrum of $\L_{\infty}$},} \\
       \rho(\L_{\infty}) :=& \C\backslash\sigma(\L_{\infty})                                               &&\text{\begriff{resolvent set of $\L_{\infty}$},} \\
  R(\lambda,\L_{\infty}) :=& \left(\lambda I-\L_{\infty}\right)^{-1}\text{, for }\lambda\in\rho(\L_{\infty})      &&\text{\begriff{resolvent of $\L_{\infty}$}.}
\end{align*}

The estimate from the following Lemma \ref{lem:LemmaForUniquenessInDpmax} is crucial for the $L^p$-resolvent estimates in Theorem \ref{thm:UniquenessInDpmax} below. 
This result is a complex-valued version of \cite[Lemma 2.1]{MetafunePallaraVespri2005} and is taken from \cite[Lemma 5.12]{Otten2014}.

\begin{lemma}\label{lem:LemmaForUniquenessInDpmax}
  Let the assumption \eqref{cond:A3} be satisfied for $\K=\C$. Moreover, let $\Omega\subset\R^d$ be a bounded domain with a $C^2$-boundary 
  or $\Omega=\R^d$, $1<p<\infty$, $v\in W^{2,p}(\Omega,\C^N)\cap W^{1,p}_0(\Omega,\C^N)$ and $\eta\in C^1_b(\Omega,\R)$ be nonnegative, then
  \begin{align*}
             & -\Re\int_{\Omega}\eta\overline{v}^T|v|^{p-2}A\triangle v \\
    \geqslant& (p-1)\Re\int_{\Omega}\eta|v|^{p-2}\sum_{j=1}^{d}\overline{D_j v}^T A D_j v\one_{\{v\neq 0\}}
               +\Re\int_{\Omega}\overline{v}^T\left|v\right|^{p-2}\sum_{j=1}^{d}D_j\eta A D_jv \\ 
             & +(p-2)\Re\int_{\Omega}\eta|v|^{p-4}\sum_{j=1}^{d}\Bigg[\Re\left(\overline{D_j v}^Tv\right)\overline{v}^T-|v|^2\overline{D_j v}^T \Bigg]A D_j v\one_{\{v\neq 0\}}.
  \end{align*}
\end{lemma}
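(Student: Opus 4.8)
The plan is to derive the estimate as the limiting form of an integration--by--parts identity, after regularizing the weight $|v|^{p-2}$, which fails to be differentiable on the zero set of $v$.

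\textbf{Regularization and integration by parts.} For $\varepsilon>0$ put $\rho_\varepsilon:=(|v|^2+\varepsilon)^{\frac{p-2}{2}}$; then $\rho_\varepsilon$ is a $C^1$ function of $v$ with $D_j\rho_\varepsilon=(p-2)(|v|^2+\varepsilon)^{\frac{p-4}{2}}\Re(\overline{v}^TD_jv)$. For fixed $\varepsilon$ the function $\eta\,\overline{v}^T\rho_\varepsilon$ belongs to $W^{1,p}(\Omega,\C^N)$ and has zero trace on $\partial\Omega$ (since $v\in W^{1,p}_0(\Omega,\C^N)$ makes $\overline{v}^T$ vanish there), while $AD_jv\in W^{1,p}(\Omega,\C^N)$ since $v\in W^{2,p}(\Omega,\C^N)$; hence integration by parts in each $x_j$ is legitimate and the boundary terms drop out. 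Writing $A\triangle v=\sum_{j=1}^{d}D_j(AD_jv)$ and expanding $D_j(\eta\,\overline{v}^T\rho_\varepsilon)$, one obtains for every $\varepsilon>0$ the identity
\begin{align*}
  -\Re\int_\Omega\eta\,\overline{v}^T\rho_\varepsilon A\triangle v
  &= \Re\int_\Omega\eta\,\rho_\varepsilon\sum_{j=1}^{d}\overline{D_jv}^TAD_jv
     +\Re\int_\Omega\overline{v}^T\rho_\varepsilon\sum_{j=1}^{d}D_j\eta\,AD_jv \\
  &\quad +(p-2)\Re\int_\Omega\eta\,(|v|^2+\varepsilon)^{\frac{p-4}{2}}\sum_{j=1}^{d}\Re(\overline{v}^TD_jv)\,\overline{v}^TAD_jv.
\end{align*}

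\textbf{Passing to the limit $\varepsilon\downarrow0$.} On $\{v\neq0\}$ one has $\rho_\varepsilon\to|v|^{p-2}$ and $(|v|^2+\varepsilon)^{\frac{p-4}{2}}\to|v|^{p-4}$ pointwise, while on $\{v=0\}$ it holds $\nabla v=0$ almost everywhere (a standard property of Sobolev functions), so every integrand above carrying a factor $D_jv$ vanishes a.e.\ there; this is precisely why the indicators $\one_{\{v\neq0\}}$ appear in the limit. For the left-hand side, for the term with $D_j\eta$, and for the cross term I would pass to the limit by dominated convergence: using $|\overline{v}^T\rho_\varepsilon|\le|v|^{p-1}$ when $p\le2$ (resp.\ $\le|v|(|v|^2+1)^{\frac{p-2}{2}}$ when $p\ge2$ and $\varepsilon\le1$) and $(|v|^2+\varepsilon)^{\frac{p-4}{2}}|v|^2\le\rho_\varepsilon$, together with H\"older's inequality, one produces $L^1$-dominants built from $|v|^{p-1}|\triangle v|$, $|v|^{p-1}|D_jv|$ and $|v|^{p-2}|D_jv|^2$. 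For the remaining term I use only Fatou's lemma: by \eqref{cond:A3} the integrand is $\eta\,\rho_\varepsilon\sum_{j}\Re\langle D_jv,AD_jv\rangle\ge\beta_A\,\eta\,\rho_\varepsilon\sum_j|D_jv|^2\ge0$, so
\begin{align*}
  \liminf_{\varepsilon\downarrow0}\;\Re\int_\Omega\eta\,\rho_\varepsilon\sum_{j=1}^{d}\overline{D_jv}^TAD_jv
  \;\ge\; \Re\int_\Omega\eta\,|v|^{p-2}\sum_{j=1}^{d}\overline{D_jv}^TAD_jv\,\one_{\{v\neq0\}},
\end{align*}
which is what turns the identity into the desired inequality. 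It remains to recognise the right-hand side: splitting off $-(p-2)\Re\int_\Omega\eta\,|v|^{p-2}\sum_j\overline{D_jv}^TAD_jv\,\one_{\{v\neq0\}}$ and combining it with the cross term (now carrying the weight $|v|^{p-4}$) yields exactly $(p-2)\Re\int_\Omega\eta\,|v|^{p-4}\sum_j[\Re(\overline{D_jv}^Tv)\overline{v}^T-|v|^2\overline{D_jv}^T]AD_jv\,\one_{\{v\neq0\}}$, while the coefficient of $\Re\int_\Omega\eta\,|v|^{p-2}\sum_j\overline{D_jv}^TAD_jv\,\one_{\{v\neq0\}}$ becomes $(p-1)$; here one uses $\Re(\overline{v}^TD_jv)=\Re(\overline{D_jv}^Tv)$.

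\textbf{Main obstacle.} The delicate point is the rigour of the limiting procedure. For $1<p<2$ the weight $|v|^{p-2}$ is singular on $\{v=0\}$, and for $\Omega=\R^d$ there need not be a single $\varepsilon$-uniform $L^1$-dominant for all terms at once. A clean way around both is to establish the inequality first for $v\in C^\infty_c(\R^d,\C^N)$ (or for bounded $\Omega$ with $\eta$ of compact support), where the integrands have compact support and the $\varepsilon$-limit is elementary, and then pass to general $v\in W^{2,p}(\Omega,\C^N)\cap W^{1,p}_0(\Omega,\C^N)$ by density in $W^{2,p}$: along a suitable subsequence $v_n\to v$ and $D_jv_n\to D_jv$ a.e., the map $w\mapsto|w|^{p-2}w$ is continuous from $L^p$ into $L^{p/(p-1)}$ so the left-hand side converges, and the nonnegative quadratic term on the right is lower semicontinuous by Fatou, so the inequality persists.
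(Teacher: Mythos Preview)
Your approach is essentially the same as the paper's: regularize $|v|^{p-2}$ by $(|v|^2+\varepsilon)^{(p-2)/2}$, integrate by parts, and use Fatou on the nonnegative quadratic term (this is where \eqref{cond:A3} is used) together with dominated convergence and the continuity of $w\mapsto|w|^{p-2}w$ from $L^p$ to $L^{p/(p-1)}$ for the rest. The paper organizes the argument slightly differently---it handles $p\geqslant 2$ directly (obtaining equality without regularization), and for $1<p<2$ performs the $\varepsilon\to0$ limit only for $v\in C^2(\overline\Omega)\cap C_c(\Omega)$ before applying Fatou in the density step---but your ``Main obstacle'' paragraph arrives at exactly this strategy, so the two proofs coincide in substance.
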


\begin{remark}
  For the parameter regime $2\leqslant p<\infty$ Lemma \ref{lem:LemmaForUniquenessInDpmax} follows directly from the integration by parts formula and therefore, 
  the estimate is satisfied with equality. In this case, the real parts in front of the integrals can also be dropped and the assumption \eqref{cond:A3} is not used. 
  If $1<p<2$, then Lemma \ref{lem:LemmaForUniquenessInDpmax} is satisfied only with inequality, which is a direct consequence of Fatou's lemma. The positivity of 
  the quadratic term, based on \eqref{cond:A3}, is necessary for the application of Fatou's lemma.
\end{remark}

\begin{proof}
  We only provide the proof for $\Omega\subset\R^d$ bounded. In case $\Omega=\R^d$ integration by parts yields no boundary terms due to decay at infinity 
  and thus it can be treated in an analogous way but without boundary integrals. Let $\Omega\subset\R^d$ be bounded with $C^2$-boundary $\partial\Omega$. \\
  \textbf{Case 1:} ($2\leqslant p<\infty$). Multiplying $-A\triangle v$ from left by $\eta\overline{v}^T\left|v\right|^{p-2}$, integrating over $\Omega$ and 
  using integration by parts formula we obtain
  \begin{align*}
     & -\int_{\Omega}\eta\overline{v}^T |v|^{p-2}A\triangle v
    =  -\sum_{j=1}^{d}\int_{\Omega}\eta\overline{v}^T |v|^{p-2}AD_j^2 v \\
    =& \sum_{j=1}^{d}\int_{\Omega}(D_j\eta)\overline{v}^T |v|^{p-2} A D_j v 
       +\sum_{j=1}^{d}\int_{\Omega}\eta D_j(\overline{v}^T |v|^{p-2}) A D_j v \\
    =& \sum_{j=1}^{d}\int_{\Omega}(D_j\eta)\overline{v}^T|v|^{p-2} A D_j v 
       +(p-1)\sum_{j=1}^{d}\int_{\Omega}\eta |v|^{p-2} \overline{D_j v}^T A D_j v \one_{\{v\neq 0\}} \\
     & +(p-2)\sum_{j=1}^{d}\int_{\Omega}\eta |v|^{p-4}\left[\Re(\overline{D_j v}^T v)\overline{v}^T-|v|^2\overline{D_j v}^T\right]A D_j v \one_{\{v\neq 0\}} \\
    =& (p-1)\int_{\Omega}\eta|v|^{p-2}\sum_{j=1}^{d}\overline{D_j v}^T A D_j v\one_{\{v\neq 0\}}
        +\int_{\Omega}\overline{v}^T\left|v\right|^{p-2}\sum_{j=1}^{d}D_j\eta A D_jv \\ 
     & +(p-2)\int_{\Omega}\eta|v|^{p-4}\sum_{j=1}^{d}\Bigg[\Re\left(\overline{D_j v}^Tv\right)\overline{v}^T-|v|^2\overline{D_j v}^T \Bigg]A D_j v\one_{\{v\neq 0\}}.
  \end{align*}
  Now applying real parts we deduce the desired estimates with equality. In the computations above we used the following auxiliaries: The relation\linebreak $z+\overline{z}=2\Re z$ 
  yields
  \begin{align}
    \label{equ:DiAbsVUpToP}
    \begin{split}
       & D_j\left(\left|v\right|^p\right) 
      =  D_j\left(\left(|v|^2\right)^{\frac{p}{2}}\right)
      =  \frac{p}{2} \left(|v|^2\right)^{\frac{p}{2}-1} D_j\left(|v|^2\right) 
      =  \frac{p}{2} |v|^{p-2}D_j(\overline{v}^T v) \\
      =& \frac{p}{2} |v|^{p-2} \left[\overline{D_j v}^Tv+\overline{v}^T D_j v\right]
      =  \frac{p}{2} |v|^{p-2} \left[\overline{D_j v}^T v+\overline{\overline{D_j v}^T v}^T\right] \\
      =& p |v|^{p-2} \Re\left(\overline{D_j v}^T v\right)
    \end{split}
  \end{align}
  for every $v\in\C^N$, $p\geqslant 2$ and $j=1,\ldots,d$. This formula remains valid for every $p\geqslant 0$ and $v\neq 0$. Using the formula \eqref{equ:DiAbsVUpToP} 
  we obtain for every $v\neq 0$ and $p\geqslant 2$
  \begin{align*}
     & D_j\left(\overline{v}^T |v|^{p-2}\right)
    =  \overline{D_j v}^T |v|^{p-2} + \overline{v}^T D_j\left(|v|^{p-2}\right) \\
    =& \overline{D_j v}^T |v|^{p-2} + (p-2)\overline{v}^T |v|^{p-4} \Re\left(\overline{D_j v}^T v\right) \\
    =& (p-1)|v|^{p-2}\overline{D_j v}^T +(p-2)|v|^{p-4}\left[\Re\left(\overline{D_j v}^T v\right)\overline{v}^T-|v|^2\overline{D_j v}^T\right].
  \end{align*}
  \textbf{Case 2:}  ($1<p<2$). This case is much more involved and one has to be very careful, since the expression $\left|v\right|^p$ is not differentiable at $v=0$ 
  for $1<p<2$. We prove the assertion in three steps.\\
  1. First we consider $v\in C^2(\overline{\Omega},\C^N)\cap C_{\mathrm{c}}(\Omega,\C^N)$. Multiplying $-A\triangle v$ from left by $\eta\overline{v}^T\left(|v|^2+\varepsilon\right)^{\frac{p}{2}-1}$ 
  for some $\varepsilon>0$, integrating over $\Omega$ and using integration by parts formula we obtain
  \begin{align*}
     & -\int_{\Omega}\eta\overline{v}^T\left(|v|^2+\varepsilon\right)^{\frac{p}{2}-1}A\triangle v
    =  -\sum_{j=1}^{d}\int_{\Omega}\eta\overline{v}^T\left(|v|^2+\varepsilon\right)^{\frac{p}{2}-1}A D_j^2 v \\
    =& \sum_{j=1}^{d}\Bigg[\int_{\Omega}D_j\left(\eta\overline{v}^T\left(|v|^2+\varepsilon\right)^{\frac{p}{2}-1}\right)A D_j v
       -\int_{\partial\Omega}\eta\overline{v}^T\left(|v|^2+\varepsilon\right)^{\frac{p}{2}-1}A D_j v \nu^{j}dS\Bigg] \\
    =& \sum_{j=1}^{d}\Bigg[\int_{\Omega}(D_j\eta)\overline{v}^T\left(|v|^2+\varepsilon\right)^{\frac{p}{2}-1}A D_j v
       +\int_{\Omega}\eta D_j\left(\overline{v}^T\left(|v|^2+\varepsilon\right)^{\frac{p}{2}-1}\right)A D_j v\Bigg] \\
    =& \int_{\Omega}\eta\left(|v|^2+\varepsilon\right)^{\frac{p}{2}-2}\left((p-1)|v|^2+\varepsilon\right)\sum_{j=1}^{d}\overline{D_j v}^T A D_j v \\
     & +\int_{\Omega}\overline{v}^T\left(|v|^2+\varepsilon\right)^{\frac{p}{2}-1}\sum_{j=1}^{d}D_j\eta A D_j v \\
     & +(p-2)\int_{\Omega}\eta\left(|v|^2+\varepsilon\right)^{\frac{p}{2}-2}\sum_{j=1}^{d}\left[\Re(\overline{D_j v}^T v)\overline{v}^T-|v|^2\overline{D_j v}^T\right]A D_j v.
  \end{align*}
  The boundary integral vanishes because from $v\in C_{\mathrm{c}}(\Omega,\C^N)$ follows $\overline{v}(x)=0$ for every $x\in\partial\Omega$. Moreover, we used the relations
  \begin{align*}
     & D_j\left(\left(|v|^2+\varepsilon\right)^{\frac{p}{2}-1}\right) 
    =  \left(\frac{p}{2}-1\right)\left(|v|^2+\varepsilon\right)^{\frac{p}{2}-2} D_j\left(|v|^2+\varepsilon\right) \\
    =& (p-2)\left(|v|^2+\varepsilon\right)^{\frac{p}{2}-2} \Re\left(\overline{D_j v}^T v\right),
  \end{align*}
  cf. \eqref{equ:DiAbsVUpToP} for $p=2$, and
  \begin{align*}
     & D_j\left(\overline{v}^T\left(|v|^2+\varepsilon\right)^{\frac{p}{2}-1}\right)
    =  \overline{D_j v}^T\left(|v|^2+\varepsilon\right)^{\frac{p}{2}-1}+\overline{v}^T D_j\left(\left(|v|^2+\varepsilon\right)^{\frac{p}{2}-1}\right) \\
    =& \overline{D_j v}^T \left(|v|^2+\varepsilon\right)^{\frac{p}{2}-1}+(p-2)\left(|v|^2+\varepsilon\right)^{\frac{p}{2}-2}\overline{v}^T\Re\left(\overline{D_j v}^T v\right) \\
    =& \left(|v|^2+\varepsilon\right)^{\frac{p}{2}-2}\left[\overline{D_j v}^T\left(|v|^2+\varepsilon\right)+(p-2)\overline{v}^T\Re\left(\overline{D_j v}^T v\right)\right] \\
    =& \left(|v|^2+\varepsilon\right)^{\frac{p}{2}-2}\left((p-1)|v|^2+\varepsilon\right)\overline{D_j v}^T \\
     & +\left(|v|^2+\varepsilon\right)^{\frac{p}{2}-2}(p-2)\left[\Re\left(\overline{D_j v}^T v\right)\overline{v}^T-|v|^2\overline{D_j v}^T\right].
  \end{align*}
  Note that both formulas are valid for $1<p<2$, $v\in\C$ and $j=1,\ldots,d$ if $\varepsilon>0$ and for $1<p<2$, $v\neq 0$ and $j=1,\ldots,d$ if $\varepsilon=0$. \\
  2. We now apply Lebesgue's dominated convergence theorem, see \cite[A1.21]{Alt2006}: Putting the last two terms of the equation from step 1 to the left hand side, 
  taking the limit $\varepsilon\to 0$ and applying dominated convergence twice we obtain
  \begin{align*}
     & (p-1)\int_{\Omega}\eta |v|^{p-2}\sum_{j=1}^{d}\overline{D_j v}^T A D_j v \one_{\{v\neq 0\}} \\
    =& \lim_{\varepsilon\to 0}\int_{\Omega}\eta\left(|v|^2+\varepsilon\right)^{\frac{p}{2}-2}\left((p-1)|v|^2+\varepsilon\right)\sum_{j=1}^{d}\overline{D_j v}^T A D_j v \\
    =& -\lim_{\varepsilon\to 0}\Bigg[\int_{\Omega}\eta\overline{v}^T\left(|v|^2+\varepsilon\right)^{\frac{p}{2}-1}A\triangle v
       +\int_{\Omega}\overline{v}^T\left(|v|^2+\varepsilon\right)^{\frac{p}{2}-1}\sum_{j=1}^{d}D_j \eta A D_j v \\
     & +(p-2)\int_{\Omega}\eta\left(|v|^2+\varepsilon\right)^{\frac{p}{2}-2}\sum_{j=1}^{d}\left[\Re\left(\overline{D_j v}^T v\right)\overline{v}^T-|v|^2\overline{D_j v}^T\right]A D_j v\Bigg] \\
    =& -\int_{\Omega}\eta\overline{v}^T|v|^{p-2}A\triangle v - \int_{\Omega}\overline{v}^T|v|^{p-2}\sum_{j=1}^{d}D_j \eta A D_j v \\
     & -(p-2)\int_{\Omega}\eta|v|^{p-4}\sum_{j=1}^{d}\left[\Re\left(\overline{D_j v}^T v\right)\overline{v}^T-|v|^2\overline{D_j v}^T\right] A D_j v \one_{\{v\neq 0\}}.
  \end{align*}
  To justify the applications of Lebesgue's theorem, we discuss the assumptions in both cases: First, we define
  \begin{align*}
       f_{\varepsilon} :=& \eta\left(|v|^2+\varepsilon\right)^{\frac{p}{2}-2}\left((p-1)|v|^2+\varepsilon\right)\sum_{j=1}^{d}\overline{D_j v}^T A D_j v \\
       f :=& (p-1)\eta |v|^{p-2}\sum_{j=1}^{d}\overline{D_j v}^T A D_j v.
  \end{align*}
  Using $v\in C^2(\overline{\Omega},\C^N)\cap C_{\mathrm{c}}(\Omega,\C^N)$, $\eta\in C_{\mathrm{b}}(\Omega,\R)$ and $\left(|v|^2+\varepsilon\right)^{\frac{p}{2}-k}\leqslant |v|^{p-2k}$ 
  for $k=1,2$ and $1<p<2$ we obtain that $f_{\varepsilon}$ is dominated by $g$ as follows
  \begin{align*}
               \left|f_{\varepsilon}\right|
            =& \left|\eta\left((p-2)|v|^2\left(|v|^2+\varepsilon\right)^{\frac{p}{2}-2}+\left(|v|^2+\varepsilon\right)^{\frac{p}{2}-1}\right)\sum_{j=1}^{d}\overline{D_j v}^T A D_j v\right| \\
    \leqslant& |\eta|\left(|p-2|+1\right)|v|^{p-2}|A|\sum_{j=1}^{d}\left|D_j v\right|^2 \\
            =& |p-3||A||\eta||v|^{p-2}|\nabla v|^2 \one_{\{v\neq 0\}} \\
    \leqslant& |p-3||A|\left\|\eta\right\|_{\infty}\left\|v\right\|_{\infty}^{p-2}\left\|\nabla v\right\|_{\infty}^2 \one_{\{v\neq 0\}} =: g.
  \end{align*}
  Since $v$ is compactly supported, i.e. $\one_{\{v\neq 0\}}$ is compact, $g$ belongs to $L^1(\Omega,\R)$. In particular, $f_{\varepsilon}\rightarrow f$ pointwise a.e. as $\varepsilon\to 0$. 
  Thus, by dominated convergence, $f_{\varepsilon},f\in L^1(\Omega,\C^N)$ and $f_{\varepsilon}\rightarrow f$ in $L^1(\Omega,\C^N)$ as $\varepsilon\to 0$. Next, consider
  \begin{align*}
    f_{\varepsilon} :=& \overline{v}^T\left(|v|^2+\varepsilon\right)^{\frac{p}{2}-1}\left(\eta A\triangle v+\sum_{j=1}^{d}D_j \eta A D_j v\right) \\
                      & +(p-2)\eta\left(|v|^2+\varepsilon\right)^{\frac{p}{2}-2}\sum_{j=1}^{d}\left[\Re\left(\overline{D_j v}^Tv\right)\overline{v}^T-|v|^2\overline{D_j v}^T\right]A D_j v.\\
    f :=& \overline{v}^T|v|^{p-2}\left(\eta A\triangle v+\sum_{j=1}^{d}D_j \eta A D_j v\right) \\
        & +(p-2)\eta|v|^{p-4}\sum_{j=1}^{d}\left[\Re\left(\overline{D_j v}^T v\right)\overline{v}^T-|v|^2\overline{D_j v}^T\right]A D_j v
  \end{align*}
  Using $v\in C^2(\overline{\Omega},\C^N)\cap C_{\mathrm{c}}(\Omega,\C^N)$, $\eta\in C^1_b(\Omega,\R)$ and $\left(|v|^2+\varepsilon\right)^{\frac{p}{2}-k}\leqslant |v|^{p-2k}$ for $k=1,2$ 
  and $1<p<2$ we obtain that $f_{\varepsilon}$ is dominated by $g$ as follows
  \begin{align*}
               \left|f_{\varepsilon}\right|
    \leqslant& |v|\left(|v|^2+\varepsilon\right)^{\frac{p}{2}-1}\left(|\eta||A||\triangle v|+|A|\sum_{j=1}^{d}|D_j \eta||D_j v|\right) \\
             & +|p-2||\eta|\left(|v|^2+\varepsilon\right)^{\frac{p}{2}-2}\sum_{j=1}^{d}\left[\left|\Re\left(\overline{D_j v}^T v\right)\right||v|+|v|^2|D_j v|\right]|A||D_j v| \\
    \leqslant& |v|^{p-1}\left(|\eta||A||\triangle v|+|A|\sum_{j=1}^{d}|D_j \eta||D_j v|\right)\one_{\{v\neq 0\}} \\
             & +2|p-2||\eta||v|^{p-2}\sum_{j=1}^{d}|D_j v|^2|A|\one_{\{v\neq 0\}} \\
    \leqslant& \bigg[|A|\left\|\eta\right\|_{\infty}\left\|v\right\|_{\infty}^{p-1}\left\|\triangle v\right\|_{\infty}
               +d|A|\left\|\eta\right\|_{1,\infty}\left\|v\right\|_{1,\infty} \\
             & 2d|p-2||A|\left\|\eta\right\|_{\infty}\left\|v\right\|_{\infty}^{p-2}\left\|v\right\|_{1,\infty}^2\bigg]\one_{\{v\neq 0\}} =: g.
  \end{align*}
  Since $v$ is compactly supported, we deduce once more that $g$ belongs to $L^1(\Omega,\R)$. In particular, $f_{\varepsilon}\rightarrow f$ pointwise a.e. as $\varepsilon\to 0$. Thus, 
  by dominated convergence, $f_{\varepsilon},f\in L^1(\Omega,\C^N)$ and $f_{\varepsilon}\rightarrow f$ in $L^1(\Omega,\C^N)$ as $\varepsilon\to 0$. \\
  3. Now let $v\in W^{2,p}(\Omega,\C^N)\cap W^{1,p}_0(\Omega,\C^N)$. In this case we use a density argument and Fatou's lemma, that yields the inequality. Note 
  that we have to take real parts on both sides in order to apply Fatou's lemma. Since $C^2(\overline{\Omega},\C^N)\cap C_{\mathrm{c}}(\Omega,\C^N)$ is a 
  dense subspace of $W^{2,p}(\Omega,\C^N)\cap W^{1,p}_0(\Omega,\C^N)$ w.r.t. $\left\|\cdot\right\|_{W^{2,p}}$, there exists a sequence 
  $v_n\in C^2(\overline{\Omega},\C^N)\cap C_{\mathrm{c}}(\Omega,\C^N)$ such that $v_n\rightarrow v$ w.r.t. $\left\|\cdot\right\|_{W^{2,p}}$ 
  as $n\to\infty$, $n\in\N$. Furthermore, there exists a subset $\N'\subset\N$ such that $v_n\rightarrow v$ and $\nabla v_n\rightarrow \nabla v$ pointwise a.e. 
  as $n\to\infty$, $n\in\N'$. In the following we consider this subsequence $(v_n)_{n\in\N'}\subset C^2(\overline{\Omega},\C^N)\cap C_{\mathrm{c}}(\Omega,\C^N)$: 
  Inserting $v_n$ into the equation from step 2, taking real parts and the limit inferior $n\to\infty$ ($n\in\N'$) on both sides and applying Fatou's lemma 
  on the left hand side we obtain
  \begin{align*}
     & (p-1)\Re\int_{\Omega}\eta|v|^{p-2}\sum_{j=1}^{d}\overline{D_j v}^T A D_j v \one_{\{v\neq 0\}} \\
    =& \int_{\Omega}\lim_{n\to\infty}(p-1)\eta|v_n|^{p-2}\Re\sum_{j=1}^{d}\overline{D_j v_n}^T A D_j v_n \one_{\{v_n\neq 0\}} \\
    =& \int_{\Omega}\underset{n\to\infty}{\liminf}\,(p-1)\eta|v_n|^{p-2}\Re\sum_{j=1}^{d}\overline{D_j v_n}^T A D_j v_n \one_{\{v_n\neq 0\}} \\
    \leqslant& \underset{n\to\infty}{\liminf}\,\int_{\Omega}(p-1)\eta|v_n|^{p-2}\Re\sum_{j=1}^{d}\overline{D_j v_n}^T A D_j v_n \one_{\{v_n\neq 0\}} \\
    =& \underset{n\to\infty}{\liminf}\,\Bigg[-\Re\int_{\Omega}\eta\overline{v_n}^T|v_n|^{p-2}A\triangle v_n-\Re\int_{\Omega}\overline{v_n}^T|v_n|^{p-2}\sum_{j=1}^{d}D_j \eta A D_j v_n \\
     & -(p-2)\Re\int_{\Omega}\eta|v_n|^{p-4}\sum_{j=1}^{d}\left[\Re\left(\overline{D_j v_n}^T v_n\right)\overline{v_n}^T-|v_n|^2\overline{D_j v_n}^T\right]A D_j v_n\one_{\{v_n\neq 0\}}\Bigg] \\
    =& \lim_{n\to\infty}\Bigg[-\Re\int_{\Omega}\eta\overline{v_n}^T|v_n|^{p-2}A\triangle v_n-\Re\int_{\Omega}\overline{v_n}^T|v_n|^{p-2}\sum_{j=1}^{d}D_j \eta A D_j v_n \\
     & -(p-2)\Re\int_{\Omega}\eta|v_n|^{p-4}\sum_{j=1}^{d}\left[\Re\left(\overline{D_j v_n}^T v_n\right)\overline{v_n}^T-|v_n|^2\overline{D_j v_n}^T\right]A D_j v_n\one_{\{v_n\neq 0\}}\Bigg] \\
    =& -\Re\int_{\Omega}\eta\overline{v}^T|v|^{p-2}A\triangle v - \Re\int_{\Omega}\overline{v}^T|v|^{p-2}\sum_{j=1}^{d}D_j \eta A D_j v \\
     & -(p-2)\Re\int_{\Omega}\eta|v|^{p-4}\sum_{j=1}^{d}\left[\Re\left(\overline{D_j v}^T v\right)\overline{v}^T-|v|^2\overline{D_j v}^T\right]A D_j v\one_{\{v\neq 0\}}.
  \end{align*}
  In the first equality we used the fact that $v_n\rightarrow v$ and $\nabla v_n\rightarrow \nabla v$ pointwise a.e. as $n\to\infty$, $n\in\N'$. The last equality 
  can be accepted as follows: Let $f_n\rightarrow f$ in $L^{q}$ and $g_n\rightarrow g$ in $L^p$ with $\frac{1}{p}+\frac{1}{q}=1$, i.e. $q=\frac{p}{p-1}$, 
  then $\int f_n g_n\rightarrow\int fg$ by H\"older's inequality, since
  \begin{align*}
    \int(f_ng_n-fg) &= \int(f_n-f)g+\int f(g_n-g) \\
                    &\leqslant \left\|f_n-f\right\|_{L^q}\left\|q_n\right\|_{L^p}+\left\|f\right\|_{L^q}\left\|g_n-g\right\|_{L^p}\rightarrow 0.
  \end{align*}
  Thus,
  \begin{align*}
    &\overline{v_n}^T|v_n|^{p-2}\overset{L^q}{\rightarrow}\overline{v}^T|v|^{p-2}, &&A\triangle v_n\overset{L^p}{\rightarrow}A\triangle v, \\
    &\overline{v_n}^T|v_n|^{p-2}\overset{L^q}{\rightarrow}\overline{v}^T|v|^{p-2}, &&AD_j v_n\overset{L^p}{\rightarrow}AD_j v, \\
    &|v_n|^{p-4}\Re\left(\overline{D_j v_n}^T v_n\right)\overline{v_n}^T\overset{L^q}{\rightarrow}|v|^{p-4}\Re\left(\overline{D_j v}^T v\right)\overline{v}^T,
     &&A D_j v_n\overset{L^p}{\rightarrow}A D_j v, \\
    &|v_n|^{p-2}\overline{D_j v_n}^T\overset{L^q}{\rightarrow}|v|^{p-2}\overline{D_j v}^T, &&A D_j v_n\overset{L^p}{\rightarrow}A D_j v,
  \end{align*}
  together with $\eta\in C^1_{\mathrm{b}}(\R^d,\R)$ yields the last equality in the above equation. It remains to justify the application of Fatou's lemma, \cite[A1.20]{Alt2006}: Consider
  \begin{align*}
    f_n:=(p-1)\eta|v_n|^{p-2}\Re\sum_{j=1}^{d}\overline{D_j v_n}^T A D_j v_n \one_{\{v_n\neq 0\}},\,n\in\N'.
  \end{align*}
  By H\"older's inequality we have already seen that $\liminf_{n\to\infty}f_n<\infty$ is satisfied. Moreover, $f_n\geqslant 0$ pointwise a.e., since $A$ satisfies assumption 
  \eqref{cond:A3} and $\eta$ is nonnegative. Finally, $f_n\in L^1(\Omega,\R)$, since $v_n\in C^2(\overline{\Omega},\C^N)\cap C_{\mathrm{c}}(\Omega,\C^N)$ and $\eta\in C^1_{\mathrm{b}}(\R^d,\R)$. 
  Thus, by Fatou's, $\liminf_{n\to\infty}f_n\in L^1(\Omega,\R)$ and 
  \begin{align*}
    \int_{\Omega}\underset{n\to\infty}{\liminf}f_n\leqslant \underset{n\to\infty}{\liminf}\int_{\Omega}f_n,
  \end{align*}
  that proves the lemma. Note, that it is in general not possible to apply Lebesgue's theorem in case of $v\in W^{2,p}(\Omega,\C^N)\cap W^{1,p}_0(\Omega,\C^N)$, since one cannot 
  determine a $n$-independent bound for $|f_n|\leqslant g$ a.e. for every $n\in\N'$. In fact, we only know positivity of $f_n$ due to \eqref{cond:A3}, that justifies the application 
  of Fatou's lemma and generates an inequality for $1<p<2$.
\end{proof}

We now prove sharp resolvent estimates for the formal operator $\L_{\infty}$ in $L^p(\R^d,\C^N)$ for $1<p<\infty$, which then yield uniqueness for solutions of the resolvent 
equation for $\L_{\infty}$ in $\D^p_{\mathrm{loc}}(\L_0)$. The techniques are related to \cite[Theorem 2.2, Remark 2.3]{MetafunePallaraVespri2005} for the scalar 
real-valued case and from \cite[Theorem 3.1]{BeynLorenz2008} for $d=2$. In our situation, the proof requires the additional $L^p$-dissipativity condition \eqref{cond:A4DC}. 
The condition seems to be optimal in order to derive resolvent estimates for $\L_{\infty}$ in $L^p(\R^d,\C^N)$ for $1<p<\infty$ and contains an additional restriction on 
the spectrum of the diffusion matrix $A$. 

\begin{theorem}[Resolvent Estimates for $\L_{\infty}$ in $L^p(\R^d,\C^N)$ with $1<p<\infty$]\label{thm:UniquenessInDpmax}
  Let the assumptions \eqref{cond:A4DC} and \eqref{cond:A5} be satisfied for $1<p<\infty$ and $\K=\C$. Moreover, let $\lambda\in\C$ with 
  $\Re\lambda>\beta_B$, where $\beta_B\in\R$ is from \eqref{equ:betaB}, and let $v_{\star}\in\D^p_{\mathrm{loc}}(\L_0)$ denote a solution of
  \begin{align*}
    \left(\lambda I-\L_{\infty}\right)v=g
  \end{align*}
  in $L^p(\R^d,\C^N)$ for some $g\in L^p(\R^d,\C^N)$. Then $v_{\star}$ is the unique solution in $\D^p_{\mathrm{loc}}(\L_0)$ and satisfies the resolvent estimate
  \begin{align*}
    \left\|v_{\star}\right\|_{L^p(\R^d,\C^N)}\leqslant\frac{1}{\Re\lambda-\beta_B}\left\|g\right\|_{L^p(\R^d,\C^N)}.
  \end{align*}
  In addition, for $1<p\leqslant 2$ the following gradient estimate holds
  \begin{align*}
    \left|v_{\star}\right|_{W^{1,p}(\R^d,\C^N)}\leqslant \frac{d^{\frac{1}{p}}\gamma_A^{-\frac{1}{2}}}{\left(\Re\lambda-\beta_B\right)^{\frac{1}{2}}}\left\|g\right\|_{L^p(\R^d,\C^N)}.
  \end{align*} 
\end{theorem}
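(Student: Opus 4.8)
The plan is to test the resolvent equation $(\lambda I-\L_{\infty})v_{\star}=g$ against the weight $\vartheta_n^p\,\overline{v_{\star}}^{T}|v_{\star}|^{p-2}$ and let $n\to\infty$, where $(\vartheta_n)_{n\in\N}\subset C^{\infty}_{\mathrm{c}}(\R^d,\R)$ is a standard cut-off family with $0\leqslant\vartheta_n\leqslant 1$, $\vartheta_n\equiv 1$ on $B_n(0)$, $\supp\vartheta_n\subseteq B_{2n}(0)$, $|\nabla\vartheta_n|\leqslant C/n$ and $|D^2\vartheta_n|\leqslant C/n^2$. Since $v_{\star}\in W^{2,p}_{\mathrm{loc}}(\R^d,\C^N)$ and $\vartheta_n$ has compact support, every integral below is finite and all integrations by parts produce no boundary terms. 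Taking real parts of the tested equation gives
\begin{align*}
  &(\Re\lambda)\int\vartheta_n^p|v_{\star}|^p-\Re\int\vartheta_n^p\,\overline{v_{\star}}^{T}|v_{\star}|^{p-2}A\triangle v_{\star} \\
  &\qquad-\Re\int\vartheta_n^p\,\overline{v_{\star}}^{T}|v_{\star}|^{p-2}\langle S\cdot,\nabla v_{\star}\rangle+\Re\int\vartheta_n^p\,\overline{v_{\star}}^{T}|v_{\star}|^{p-2}Bv_{\star}=\Re\int\vartheta_n^p\,\overline{v_{\star}}^{T}|v_{\star}|^{p-2}g .
\end{align*}

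\textbf{The four terms.} For the diffusion term I apply Lemma \ref{lem:LemmaForUniquenessInDpmax} with $\eta=\vartheta_n^p$; the hypothesis $v\in W^{1,p}_0$ is irrelevant here because $\eta$ is compactly supported, so no boundary contribution arises (for $1<p<2$ one repeats the $\varepsilon$-regularisation and Fatou steps from its proof). The sum of the first and third integrands on the right-hand side of the Lemma equals, for each $j$ and at points where $v_{\star}\neq0$,
\begin{align*}
  \vartheta_n^p|v_{\star}|^{p-4}\Big(|v_{\star}|^2\Re\langle D_j v_{\star},A D_j v_{\star}\rangle+(p-2)\Re\langle D_j v_{\star},v_{\star}\rangle\,\Re\langle v_{\star},A D_j v_{\star}\rangle\Big)\geqslant\gamma_A\,\vartheta_n^p|v_{\star}|^{p-2}|D_j v_{\star}|^2
\end{align*}
by the new $L^p$-dissipativity condition \eqref{cond:A4DC} with $z=v_{\star}$, $w=D_j v_{\star}$; this is the only place \eqref{cond:A4DC} enters, and in the scalar real case this $(p-2)$-contribution vanishes identically, which is why \cite{MetafunePallaraVespri2005} and \cite{BeynLorenz2008} do not need it. Summing over $j$,
\begin{align*}
  -\Re\int\vartheta_n^p\,\overline{v_{\star}}^{T}|v_{\star}|^{p-2}A\triangle v_{\star}\geqslant\gamma_A\int\vartheta_n^p|v_{\star}|^{p-2}|\nabla v_{\star}|^2\one_{\{v_{\star}\neq0\}}+E_n,
\end{align*}
where $E_n=\Re\int\overline{v_{\star}}^{T}|v_{\star}|^{p-2}\sum_j D_j(\vartheta_n^p)\,A\,D_j v_{\star}$ is supported in the annulus $\{n\leqslant|x|\leqslant 2n\}$; since $|D_j(\vartheta_n^p)|\leqslant pC/n$ there, Young's inequality bounds $|E_n|$ by $\tfrac{\gamma_A}{2}\int\vartheta_n^p|v_{\star}|^{p-2}|\nabla v_{\star}|^2\one_{\{v_{\star}\neq0\}}$ plus $\tfrac{c}{n}\int_{\{n\leqslant|x|\leqslant 2n\}}|v_{\star}|^p$. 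For the drift term, $\Re(\overline{v_{\star}}^{T}|v_{\star}|^{p-2}D_i v_{\star})=\tfrac1p D_i(|v_{\star}|^p)$ together with $\mathrm{div}(S\cdot)=\trace S=0$ (by \eqref{cond:A5}) and an integration by parts give
\begin{align*}
  -\Re\int\vartheta_n^p\,\overline{v_{\star}}^{T}|v_{\star}|^{p-2}\langle S\cdot,\nabla v_{\star}\rangle=\int\vartheta_n^{p-1}\langle S\cdot,\nabla\vartheta_n\rangle|v_{\star}|^p ,
\end{align*}
of modulus $\leqslant 2C|S|\int_{\{n\leqslant|x|\leqslant 2n\}}|v_{\star}|^p$. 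Finally $\Re(\overline{v_{\star}}^{T}|v_{\star}|^{p-2}Bv_{\star})=|v_{\star}|^{p-2}\Re\langle v_{\star},Bv_{\star}\rangle\geqslant-\beta_B|v_{\star}|^p$ by \eqref{equ:betaB}, and Hölder's inequality bounds the right-hand side by $\|v_{\star}\|_{L^p}^{p-1}\|g\|_{L^p}$.

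\textbf{Passing to the limit.} Absorbing the $\tfrac{\gamma_A}{2}$-part of $E_n$ and letting $n\to\infty$ (the left-hand integrals converging to $\|v_{\star}\|_{L^p}^p$ and $\int_{\R^d}|v_{\star}|^{p-2}|\nabla v_{\star}|^2\one_{\{v_{\star}\neq0\}}$ by monotone convergence, the annulus integrals vanishing since $v_{\star}\in L^p$) yields
\begin{align*}
  (\Re\lambda-\beta_B)\|v_{\star}\|_{L^p}^p+\gamma_A\int_{\R^d}|v_{\star}|^{p-2}|\nabla v_{\star}|^2\one_{\{v_{\star}\neq0\}}\leqslant\|v_{\star}\|_{L^p}^{p-1}\|g\|_{L^p}.
\end{align*}
Dropping the gradient term and dividing by $\|v_{\star}\|_{L^p}^{p-1}$ (the case $v_{\star}=0$ being trivial) gives the resolvent estimate; applied to the difference of two solutions with $g=0$ it yields uniqueness in $\D^p_{\mathrm{loc}}(\L_0)$. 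For $1<p\leqslant2$ I instead keep the gradient term, so that $\int_{\R^d}|v_{\star}|^{p-2}|\nabla v_{\star}|^2\one_{\{v_{\star}\neq0\}}\leqslant\gamma_A^{-1}\|v_{\star}\|_{L^p}^{p-1}\|g\|_{L^p}$; Hölder's inequality with conjugate exponents $\tfrac2p$, $\tfrac{2}{2-p}$ applied to $|\nabla v_{\star}|^p\one_{\{v_{\star}\neq0\}}=\big(|v_{\star}|^{p-2}|\nabla v_{\star}|^2\one_{\{v_{\star}\neq0\}}\big)^{p/2}(|v_{\star}|^p)^{(2-p)/2}$ then gives $\big\||\nabla v_{\star}|\big\|_{L^p}\leqslant\gamma_A^{-1/2}(\Re\lambda-\beta_B)^{-1/2}\|g\|_{L^p}$, and the elementary bound $|v_{\star}|_{W^{1,p}}^p=\sum_{j=1}^d\|D_j v_{\star}\|_{L^p}^p\leqslant d\,\big\||\nabla v_{\star}|\big\|_{L^p}^p$ produces the stated gradient estimate.

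\textbf{Main obstacle.} The delicate point is the localisation: one must make the cut-off computation rigorous for a merely $W^{2,p}_{\mathrm{loc}}$ solution (for $1<p<2$ this requires the $\varepsilon$-regularisation and Fatou arguments of Lemma \ref{lem:LemmaForUniquenessInDpmax}) and, above all, control the cut-off error $E_n$ so that its quadratic-gradient part is absorbed into the $\gamma_A$-term — which is only possible once $n$ is large, after first checking that $\sup_n\int\vartheta_n^p|v_{\star}|^{p-2}|\nabla v_{\star}|^2\one_{\{v_{\star}\neq0\}}<\infty$ — while its remainder vanishes with the $L^p$-tail of $v_{\star}$. The genuinely new input compared to \cite{MetafunePallaraVespri2005} and \cite{BeynLorenz2008} is condition \eqref{cond:A4DC}, which is automatic in the scalar real case but is exactly what makes the post-integration-by-parts integral $\int\vartheta_n^p|v_{\star}|^{p-2}|\nabla v_{\star}|^2$ nonnegative in the complex vector-valued setting.
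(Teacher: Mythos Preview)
Your approach is the paper's: cut off, multiply by $\eta\,\overline{v_{\star}}^{T}|v_{\star}|^{p-2}$, apply Lemma~\ref{lem:LemmaForUniquenessInDpmax}, use \eqref{cond:A4DC} to make the combined gradient integrand nonnegative, integrate the drift by parts using $\trace S=0$, and pass to the limit. Two technical slips deserve attention.

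First, with the weight $\vartheta_n^p$ (rather than the paper's $\eta_n^2$) the Young splitting of $E_n$ forces the factorisation $\vartheta_n^{p-1}=\vartheta_n^{p/2}\cdot\vartheta_n^{(p-2)/2}$, so the $|v_{\star}|^p$-remainder carries $\vartheta_n^{p-2}|\nabla\vartheta_n|^2$; for $1<p<2$ this blows up near $\partial(\supp\vartheta_n)$ and your bound $|E_n|\leqslant\tfrac{\gamma_A}{2}(\cdots)+\tfrac{c}{n}(\cdots)$ fails as written. The paper uses $\eta_n^2$ precisely so that $D_j(\eta_n^2)=2\eta_n D_j\eta_n$ factors cleanly for every $p$.

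Second, absorbing a \emph{fixed} $\tfrac{\gamma_A}{2}$ from $E_n$ leaves only $\tfrac{\gamma_A}{2}$ in front of the gradient integral after $n\to\infty$, so your final inequality cannot have the full $\gamma_A$ and the gradient constant comes out as $(2/\gamma_A)^{1/2}$ rather than $\gamma_A^{-1/2}$. The paper avoids this by first applying H\"older to the product $\eta_n|D_j\eta_n|\,|v_{\star}|^{p-1}|D_j v_{\star}|$, which isolates the global factor $|D_j\eta_n|\leqslant C/n$, and only then uses Cauchy with a fixed $\varepsilon$: both error pieces then carry a $1/n$, hence vanish in the limit, and the full $\gamma_A$ survives via Fatou.
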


\begin{remark}
  \enum{1} Note that the proof deals with cut-off functions. These are necessary because $v\in W^{2,p}_{\mathrm{loc}}(\R^d,\C^N)$ implies 
           that $\nabla v$ and $\triangle v$ are only $p$-integrable over bounded sets in $\R^d$.\\
  \enum{2} The gradient estimate is proved only for $1<p\leqslant 2$ but not for $p>2$. The requirement $1<p\leqslant 2$ appears in a special 
           application of H\"older's inequality. \\
  \enum{3} An $L^p$-dissipativity condition for the operator $\nabla^T\left(Q\nabla v\right)+\left\langle b,\nabla v\right\rangle + av$ in 
           $L^p(\Omega,\C)$ with $1<p<\infty$ can be found in \cite{CialdeaMazya2005} for the scalar case but with complex $b$, namely for constant 
           coefficients $Q\in\C^{d,d}$, $b\in\C^d$, $a\in\C$ with $\Omega\subseteq\R^d$ open in \cite[Theorem 2]{CialdeaMazya2005}, and for 
           variable coefficients $Q_{ij},\,b_j\in C^1(\overline{\Omega},\C)$, $a\in C^0(\overline{\Omega},\C)$ with $\Omega\subset\R^d$ bounded 
           in \cite[Lemma 2]{CialdeaMazya2005}. \\
  \enum{4} The $L^p$-dissipativity condition \eqref{cond:A4DC} needed in Theorem \ref{thm:UniquenessInDpmax} is not easy to interpret and to give it 
           a geometric meaning. For a complete characterization of the $L^p$-dissipativity condition \eqref{cond:A4DC} in terms of the antieigenvalues 
           of the diffusion matrix $A$ we refer to \cite[Theorem 5.18]{Otten2014}. There, it is proved that for fixed $1<p<\infty$ the $L^p$-dissipativity 
           condition \eqref{cond:A4DC} is equivalent to a lower p-dependent bound for the first antieigenvalue of the diffusion matrix $A$.
\end{remark}

\begin{proof}
  Assume $v_{\star}\in\D^p_{\mathrm{loc}}(\L_0)$ satisfies
  \begin{align}
    \label{equ:ResolventEquationVStarG}
    \left(\lambda I-\L_{\infty}\right)v_{\star} = g
  \end{align}
  in $L^p(\R^d,\C^N)$ for some $g\in L^p(\R^d,\C^N)$ with $1<p<\infty$. Let us define
  \begin{align*}
    \eta_n(x)=\eta\left(\frac{x}{n}\right),\quad \eta\in C_{\mathrm{c}}^{\infty}(\R^d,\R),\quad \eta(x)=\begin{cases}1                          &,\,|x|\leqslant 1 \\
                                                                                                          \in[0,1],\,\textrm{smooth} &,\,1<|x|<2 \\
                                                                                                          0                          &,\,|x|\geqslant 2
                                                                                             \end{cases}.
  \end{align*}
  1. Multiplying \eqref{equ:ResolventEquationVStarG} from left by $\eta_n^2 \overline{v_{\star}}^T\left|v_{\star}\right|^{p-2}$ with $1<p<\infty$, 
  integrating over $\R^d$ and taking real parts yields
  \begin{align*}
       \Re\int_{\R^d}\eta_n^2 \left|v_{\star}\right|^{p-2}\overline{v_{\star}}^T g
    =& (\Re\lambda)\int_{\R^d}\eta_n^2\left|v_{\star}\right|^p
       - \Re\int_{\R^d}\eta_n^2\overline{v_{\star}}^T\left|v_{\star}\right|^{p-2} A\triangle v_{\star} \\
     & - \Re\int_{\R^d}\eta_n^2\overline{v_{\star}}^T\left|v_{\star}\right|^{p-2} \sum_{j=1}^{d}(Sx)_j D_j v_{\star} \\
     & + \Re\int_{\R^d}\eta_n^2\overline{v_{\star}}^T\left|v_{\star}\right|^{p-2} B v_{\star}.
  \end{align*}
  2. Using \eqref{cond:A5}, i.e. $-S=S^T$, then integration by parts formula and \eqref{equ:DiAbsVUpToP} imply
  \begin{align*}
    0 =& \frac{1}{p}\int_{\R^d}\eta_n^2\left(\sum_{j=1}^{d}S_{jj}\right)\left|v_{\star}\right|^p
      =  \frac{1}{p}\int_{\R^d}\eta_n^2 \mathrm{div}\left(Sx\right)\left|v_{\star}\right|^p \\
      =& \frac{1}{p}\int_{\R^d}\eta_n^2 \left(\sum_{j=1}^{d}D_j\left((Sx)_j\right)\right)\left|v_{\star}\right|^p
      =  \frac{1}{p}\sum_{j=1}^{d}\int_{\R^d}\eta_n^2 D_j\left((Sx)_j\right)\left|v_{\star}\right|^p \\
      =& -\frac{1}{p}\sum_{j=1}^{d}\int_{\R^d}D_j\left(\eta_n^2\right) (Sx)_j \left|v_{\star}\right|^p 
         -\frac{1}{p}\sum_{j=1}^{d}\int_{\R^d}\eta_n^2 (Sx)_j D_j\left(\left|v_{\star}\right|^p\right) \\
      =& -\frac{2}{p}\sum_{j=1}^{d}\int_{\R^d}\eta_n (D_j \eta_n) (Sx)_j \left|v_{\star}\right|^p
         -\sum_{j=1}^{d}\int_{\R^d}\eta_n^2 (Sx)_j \Re\left(\overline{D_j v_{\star}}^T v_{\star}\right)\left|v_{\star}\right|^{p-2} \\
      =& -\frac{2}{p}\int_{\R^d}\eta_n \left|v_{\star}\right|^p \sum_{j=1}^{d}(D_j \eta_n)(Sx)_j 
         -\Re\int_{\R^d}\eta_n^2 \overline{v_{\star}}^T\left|v_{\star}\right|^{p-2}\sum_{j=1}^{d}(Sx)_j D_j v_{\star}.
  \end{align*}
  Since \eqref{cond:A4DC} implies \eqref{cond:A3} an application of Lemma \ref{lem:LemmaForUniquenessInDpmax} (with $\Omega=\R^d$, $\eta=\eta_n^2$) yields
  \begin{align*}
             &   \Re\int_{\R^d}\eta_n^2 \left|v_{\star}\right|^{p-2} \overline{v_{\star}}^T g \\
    \geqslant&   (\Re\lambda)\int_{\R^d}\eta_n^2\left|v_{\star}\right|^p
               + \Re\int_{\R^d}2\eta_n \overline{v_{\star}}^T\left|v_{\star}\right|^{p-2}\sum_{j=1}^{d}D_j \eta_n A D_j v_{\star} \\
             & + (p-1)\Re\int_{\R^d}\eta_n^2\left|v_{\star}\right|^{p-2}\sum_{j=1}^{d}\overline{D_j v_{\star}}^T A D_j v_{\star} 
               + \frac{2}{p} \int_{\R^d}\eta_n\left|v_{\star}\right|^p \sum_{j=1}^{d}(D_j \eta_n)(Sx)_j \\
             & + (p-2)\Re\int_{\R^d}\eta_n^2\left|v_{\star}\right|^{p-4}\sum_{j=1}^{d}\left[\Re\left(\overline{D_j v_{\star}}^T v_{\star}\right)\overline{v_{\star}}^T-|v_{\star}|^2\overline{D_j v_{\star}}^T\right]A D_j v_{\star} \\
             & + \Re\int_{\R^d}\eta_n^2\overline{v_{\star}}^T\left|v_{\star}\right|^{p-2} B v_{\star}.
  \end{align*}
  3. Putting the $2$nd and $4$th term from the right hand to the left hand side yields
  \begin{align*}
             & (\Re\lambda)\int_{\R^d}\eta_n^2\left|v_{\star}\right|^p 
               + (p-1)\Re\int_{\R^d}\eta_n^2\left|v_{\star}\right|^{p-2}\sum_{j=1}^{d}\overline{D_j v_{\star}}^T A D_j v_{\star} \\
             & + (p-2)\Re\int_{\R^d}\eta_n^2\left|v_{\star}\right|^{p-4}\sum_{j=1}^{d}\left[\Re\left(\overline{D_j v_{\star}}^T v_{\star}\right)\overline{v_{\star}}^T-|v_{\star}|^2\overline{D_j v_{\star}}^T\right]A D_j v_{\star} \\
             & + \Re\int_{\R^d}\eta_n^2\overline{v_{\star}}^T\left|v_{\star}\right|^{p-2} B v_{\star} \\
    \leqslant& \Re\int_{\R^d}\eta_n^2 \left|v_{\star}\right|^{p-2} \overline{v_{\star}}^T g 
               - \Re\int_{\R^d}2\eta_n \overline{v_{\star}}^T\left|v_{\star}\right|^{p-2}\sum_{j=1}^{d}D_j \eta_n A D_j v_{\star} \\
             & - \frac{2}{p} \int_{\R^d}\eta_n\left|v_{\star}\right|^p \sum_{j=1}^{d}(D_j \eta_n)(Sx)_j.
  \end{align*}
  For the $1$st term on the right hand side we use $\Re z\leqslant |z|$ and H{\"o}lder's inequality (with $q$ such that $\frac{1}{p}+\frac{1}{q}=1$)
  \begin{align*}
             & \Re\int_{\R^d}\eta_n^2 \left|v_{\star}\right|^{p-2}\overline{v_{\star}}^T g
            =  \int_{\R^d}\eta_n^2 \left|v_{\star}\right|^{p-2}\Re\left(\overline{v_{\star}}^T g\right) \\
    \leqslant& \int_{\R^d}\eta_n^2 \left|v_{\star}\right|^{p-1}\left|g\right|
    \leqslant  \left(\int_{\R^d}\left(\eta_n^{\frac{2(p-1)}{p}}\left|v_{\star}\right|^{p-1}\right)^{\frac{p}{p-1}}\right)^{\frac{p-1}{p}} \left(\int_{\R^d}\left(\eta_n^{\frac{2}{p}}\left|g\right|\right)^p\right)^{\frac{1}{p}} \\
            =& \left(\int_{\R^d}\eta_n^{2}\left|v_{\star}\right|^p\right)^{\frac{p-1}{p}} \left(\int_{\R^d}\eta_n^2\left|g\right|^p\right)^{\frac{1}{p}}
  \end{align*}
  For the $2$nd term we use $\Re z\leqslant |z|$, H{\"o}lder's inequality (with $p=q=2$) and Cauchy's inequality (with $\varepsilon>0$)
  \begin{align*}
             & - \Re\int_{\R^d}2\eta_n\overline{v_{\star}}^T\left|v_{\star}\right|^{p-2}\sum_{j=1}^{d}D_j \eta_n A D_jv_{\star} \\
    \leqslant& 2|A|\int_{\R^d}\eta_n\left|v_{\star}\right|^{p-1}\sum_{j=1}^{d}\left|D_j \eta_n\right| \left|D_j v_{\star}\right|
    \leqslant  \frac{2|A| \left\|\eta\right\|_{1,\infty}}{n}\sum_{j=1}^{d}\int_{\R^d}\eta_n\left|D_j v_{\star}\right| \left|v_{\star}\right|^{p-1} \\
    \leqslant& \frac{2|A| \left\|\eta\right\|_{1,\infty}}{n}\sum_{j=1}^{d}\left(\int_{\R^d}\eta_n^2\left|D_j v_{\star}\right|^2 \left|v_{\star}\right|^{p-2}\right)^{\frac{1}{2}}\left(\int_{\R^d}\left|v_{\star}\right|^p\right)^{\frac{1}{2}} \\
    \leqslant& \frac{2|A| \left\|\eta\right\|_{1,\infty}\varepsilon}{n}\sum_{j=1}^{d}\int_{\R^d}\eta_n^2\left|D_j v_{\star}\right|^2 \left|v_{\star}\right|^{p-2}
               +\frac{2d|A| \left\|\eta\right\|_{1,\infty}}{4n\varepsilon}\int_{\R^d}\left|v_{\star}\right|^p.
  \end{align*}
  Here we used that for every $x\in\R^d$ and $j=1,\ldots,d$
  \begin{align*}
              \left|D_j \eta_n(x)\right| 
            = \left|D_j\left(\eta\left(\frac{x}{n}\right)\right)\right|
            = \frac{1}{n} \left|\left(D_j \eta\right)\left(\frac{x}{n}\right)\right|
    \leqslant \frac{1}{n} \max_{j=1,\ldots,d} \max_{y\in\R^d} \left|D_j \eta(y)\right| 
            = \frac{\left\|\eta\right\|_{1,\infty}}{n}
  \end{align*}
  For the $3$rd term we use that $\eta_n(x)=0$ for $|x|\geqslant 2n$ and $\eta_n(x)=1$ for $|x|\leqslant n$. Hence $D_j \eta_n(x)=0$ for $|x|\leqslant n$ and we obtain
  \begin{align*}
             & - \frac{2}{p} \int_{\R^d}\eta_n\left|v_{\star}\right|^p\sum_{j=1}^{d}(D_j \eta_n)(Sx)_j 
    \leqslant  \frac{2}{p} \sum_{j=1}^{d} \int_{\R^d}\eta_n \left|v_{\star}\right|^p \left|(Sx)_j\right| \left|D_j \eta_n\right| \\
            =& \frac{2}{p} \sum_{j=1}^{d} \int_{n\leqslant|x|\leqslant 2n}\eta_n \left|v_{\star}\right|^p \left|(Sx)_j\right| \left|D_j \eta_n\right| 
    \leqslant  \frac{4d\left|S\right|\left\|\eta\right\|_{1,\infty}}{p} \int_{n\leqslant|x|\leqslant 2n}\left|v_{\star}\right|^p.
  \end{align*}
  The last inequality is justified by $\eta_n(x)\leqslant 1$ and
  \begin{align*}
             & \left|(Sx)_j\right| \left|D_j \eta_n(x)\right| 
            =  \frac{1}{n} \left|(Sx)_j\right| \left|\left(D_j \eta\right)\left(\frac{x}{n}\right)\right|
    \leqslant  \frac{1}{n} |S| |x| \left|\left(D_j \eta\right)\left(\frac{x}{n}\right)\right| \\
    \leqslant& \frac{|S|}{n} \bigg(\sup_{n\leqslant|x|\leqslant 2n}\left|x\right|\bigg) \max_{j=1,\ldots,d} \max_{y\in\R^d} \left|D_j \eta(y)\right|
            =  2\left|S\right|\left\|\eta\right\|_{1,\infty}.
  \end{align*}
  Altogether, combining the $2$nd and $3$rd term on the left hand side and using the notation $\left\langle u,v\right\rangle:=\overline{u}^T v$ for the Euclidean inner product on $\C^N$, 
  we obtain
  \begin{align*}
             & (\Re\lambda)\int_{\R^d}\eta_n^2\left|v_{\star}\right|^p 
               + \int_{\R^d}\eta_n^2\left|v_{\star}\right|^{p-4}\sum_{j=1}^{d}\bigg[\left|v_{\star}\right|^2\Re\left\langle D_j v_{\star},A D_j v_{\star}\right\rangle \\
             & + (p-2)\Re\left\langle D_j v_{\star},v_{\star}\right\rangle \Re\left\langle v_{\star},A D_j v_{\star}\right\rangle\bigg]
               + \int_{\R^d}\eta_n^2\left|v_{\star}\right|^{p-2}\Re\left\langle v_{\star},Bv_{\star}\right\rangle \\
    \leqslant& \left(\int_{\R^d}\eta_n^{2}\left|v_{\star}\right|^p\right)^{\frac{p-1}{p}} \left(\int_{\R^d}\eta_n^2\left|g\right|^p\right)^{\frac{1}{p}}
               + \frac{2|A| \left\|\eta\right\|_{1,\infty}\varepsilon}{n}\sum_{j=1}^{d}\int_{\R^d}\eta_n^2\left|D_j v_{\star}\right|^2 \left|v_{\star}\right|^{p-2} \\
             & + \frac{2d|A| \left\|\eta\right\|_{1,\infty}}{4n\varepsilon}\int_{\R^d}\left|v_{\star}\right|^p
               + \frac{4d\left|S\right|\left\|\eta\right\|_{1,\infty}}{p} \int_{n\leqslant|x|\leqslant 2n}\left|v_{\star}\right|^p.
  \end{align*}
  4. The $L^p$-dissipativity assumption \eqref{cond:A4DC} guarantees positivity of the term appearing in brackets $\left[\cdots\right]$ and the choice of $\beta_B$ in 
  \eqref{equ:betaB} provides a lower bound for $\Re\left\langle v_{\star},Bv_{\star}\right\rangle$. Therefore, putting the $2$nd term from the right hand to the left hand 
  side in the latter inequality from step 3 we obtain
  \begin{align*}
             & (\Re\lambda-\beta_B)\int_{\R^d}\eta_n^2\left|v_{\star}\right|^p 
               + \sum_{j=1}^{d}\int_{\R^d}\eta_n^2 \left(\gamma_A-\frac{2|A|\left\|\eta\right\|_{1,\infty}\varepsilon}{n}\right)\left|D_j v_{\star}\right|^2\left|v_{\star}\right|^{p-2} \\
    \leqslant& (\Re\lambda)\int_{\R^d}\eta_n^2\left|v_{\star}\right|^p 
               + \int_{\R^d}\eta_n^2\left|v_{\star}\right|^{p-4}\sum_{j=1}^{d}\bigg[\left|v_{\star}\right|^2\Re\left\langle D_j v_{\star},A D_j v_{\star}\right\rangle \\
             & + (p-2)\Re\left\langle D_j v_{\star},v_{\star}\right\rangle \Re\left\langle v_{\star},A D_j v_{\star}\right\rangle\bigg] 
               + \int_{\R^d}\eta_n^2\left|v_{\star}\right|^{p-2}\Re\left\langle v_{\star},Bv_{\star}\right\rangle \\
             & - \frac{2|A|\left\|\eta\right\|_{1,\infty}\varepsilon}{n}\sum_{j=1}^{d}\int_{\R^d}\eta_n^2\left|D_j v_{\star}\right|^2\left|v_{\star}\right|^{p-2}\\
    \leqslant& \left(\int_{\R^d}\eta_n^{2}\left|v_{\star}\right|^p\right)^{\frac{p-1}{p}} \left(\int_{\R^d}\eta_n^2\left|g\right|^p\right)^{\frac{1}{p}}
               + \frac{2d|A| \left\|\eta\right\|_{1,\infty}}{4n\varepsilon}\int_{\R^d}\left|v_{\star}\right|^p \\
             & + \frac{4d\left|S\right|\left\|\eta\right\|_{1,\infty}}{p} \int_{n\leqslant|x|\leqslant 2n}\left|v_{\star}\right|^p.
  \end{align*}
  5. Choosing $\varepsilon>0$ such that $\gamma_A-\frac{2|A|\left\|\eta\right\|_{1,\infty}\varepsilon}{n}>0$ for every $n\in\N$ and taking the limit inferior 
  for $n\to\infty$, an application of Lebesgue's dominated convergence theorem and Fatou's lemma yield
  \begin{align*}
             & (\Re\lambda-\beta_B)\left\|v_{\star}\right\|_{L^p(\R^d,\C^N)}^p
    \leqslant  (\Re\lambda-\beta_B)\int_{\R^d}\left|v_{\star}\right|^p
               + \gamma_A\sum_{j=1}^{d}\int_{\R^d}\left|D_j v_{\star}\right|^2\left|v_{\star}\right|^{p-2} \\
            =& (\Re\lambda-\beta_B)\int_{\R^d}\lim_{n\to\infty}\eta_n^2\left|v_{\star}\right|^p 
               + \sum_{j=1}^{d}\int_{\R^d}\underset{n\to\infty}{\liminf}\eta_n^2 \left(\gamma_A-\frac{2|A|\left\|\eta\right\|_{1,\infty}\varepsilon}{n}\right)\left|D_j v_{\star}\right|^2\left|v_{\star}\right|^{p-2} \\
    \leqslant& \underset{n\to\infty}{\liminf}\bigg[(\Re\lambda-\beta_B)\int_{\R^d}\eta_n^2\left|v_{\star}\right|^p 
               + \sum_{j=1}^{d}\int_{\R^d}\eta_n^2 \left(\gamma_A-\frac{2|A|\left\|\eta\right\|_{1,\infty}\varepsilon}{n}\right)\left|D_j v_{\star}\right|^2\left|v_{\star}\right|^{p-2}\bigg] \\
    \leqslant& \underset{n\to\infty}{\liminf}\bigg[\left(\int_{\R^d}\eta_n^{2}\left|v_{\star}\right|^p\right)^{\frac{p-1}{p}} \left(\int_{\R^d}\eta_n^2\left|g\right|^p\right)^{\frac{1}{p}}
               + \frac{2d|A| \left\|\eta\right\|_{1,\infty}}{4n\varepsilon}\int_{\R^d}\left|v_{\star}\right|^p \\
             & + \frac{4d\left|S\right|\left\|\eta\right\|_{1,\infty}}{p} \int_{n\leqslant|x|\leqslant 2n}\left|v_{\star}\right|^p\bigg] \\
            =& \left(\int_{\R^d}\lim_{n\to\infty}\eta_n^{2}\left|v_{\star}\right|^p\right)^{\frac{p-1}{p}} \left(\int_{\R^d}\lim_{n\to\infty}\eta_n^2\left|g\right|^p\right)^{\frac{1}{p}}
               + \frac{2d|A| \left\|\eta\right\|_{1,\infty}}{4\varepsilon}\int_{\R^d}\lim_{n\to\infty}\frac{1}{n}\left|v_{\star}\right|^p \\
             & + \frac{4d\left|S\right|\left\|\eta\right\|_{1,\infty}}{p} \int_{\R^d}\lim_{n\to\infty}\left|v_{\star}\right|^p\one_{\{n\leqslant|x|\leqslant 2n\}} \\
            =& \left(\int_{\R^d}\left|v_{\star}\right|^p\right)^{\frac{p-1}{p}} \left(\int_{\R^d}\left|g\right|^p\right)^{\frac{1}{p}}
            =  \left\|v_{\star}\right\|_{L^p(\R^d,\C^N)}^{p-1} \left\|g\right\|_{L^p(\R^d,\C^N)}.
  \end{align*}
  Finally, using $\Re\lambda-\beta_B>0$ the $L^p$--resolvent estimate follows by dividing both sides by $\Re\lambda-\beta_B$ and $\left\|v_{\star}\right\|_{L^p(\R^d,\C^N)}^{p-1}$. 
  Indeed, we must check that the assumptions of Lebesgue's theorem and Fatou's lemma are satisfied. We suggest that first one must apply Lebesgue's theorem, which then yields that 
  the assumptions of Fatou's lemma are satisfied. For the application of Lebesgue's theorem we have the pointwise convergence $\eta_n^2|v_{\star}|^p\to|v_{\star}|^p$, 
  $\eta_n^2|g|^p\to|g|^p$, $\frac{1}{n}|v_{\star}|^p\to 0$ and $\left|v_{\star}\right|^p\one_{\{n\leqslant|x|\leqslant 2n\}}\to 0$ for almost every $x\in\R^d$ as $n\to\infty$. 
  Furthermore, they are dominated by $|\eta_n^2|v_{\star}|^p|\leqslant|v_{\star}|^p$, $|\eta_n^2|g|^p|\leqslant|g|^p$, $\frac{1}{n}|v_{\star}|^p\leqslant|v_{\star}|^p$, 
  $\left|v_{\star}\right|^p\one_{\{n\leqslant|x|\leqslant 2n\}}\leqslant|v_{\star}|^p$ and the bounds belong to $L^1(\R^d,\R)$ since $v_{\star},g\in L^p(\R^d,\C^N)$. For the application 
  of Fatou's lemma we observe that $\eta_n^2|v_{\star}|^p$ and $\eta_n^2\left(\gamma_A-\frac{2|A|\left\|\eta\right\|_{1,\infty}\varepsilon}{n}\right)\left|D_j v_{\star}\right|^2
  \left|v_{\star}\right|^{p-2}$ belong to $L^1(\R^d,\R)$, are positive and the limit inferior of their integrals is bounded by Lebesgue's theorem. \\
  6. To show uniqueness in $\D^p_{\mathrm{loc}}(\L_0)$, let $u_{\star},v_{\star}\in\D^p_{\mathrm{loc}}(\L_0)$ be solutions of
  \begin{align*}
    \left(\lambda I-\L_{\infty}\right)u_{\star} = g\quad\text{and}\quad\left(\lambda I-\L_{\infty}\right)v_{\star} = g
  \end{align*}
  in $L^p(\R^d,\C^N)$. Then $w_{\star}:=v_{\star}-u_{\star}\in\D^p_{\mathrm{loc}}(\L_0)$ is a solution of the homogeneous problem $\left(\lambda I-\L_{\infty}\right)w_{\star} = 0$ 
  in $L^p(\R^d,\C^N)$. From the $L^p$--resolvent estimate we obtain $\left\|w_{\star}\right\|_{L^p}\leqslant 0$, hence $u_{\star}$ and $v_{\star}$ coincide in $L^p(\R^d,\C^N)$. 
  Since $u_{\star},v_{\star}\in \D^p_{\mathrm{loc}}(\L_0)$ and $\D^p_{\mathrm{loc}}(\L_0)\subset L^p(\R^d,\C^N)$ we deduce that $v_{\star}=u_{\star}$ in $\D^p_{\mathrm{loc}}(\L_0)$. \\
  7. From step 5 we obtain for every $j=1,\ldots,N$
  \begin{align*}
    \int_{\R^d}\left|D_j v_{\star}\right|^2\left|v_{\star}\right|^{p-2} \leqslant \frac{1}{\gamma_A}\left\|v_{\star}\right\|_{L^p}^{p-1}\left\|g\right\|_{L^p}.
  \end{align*}
  Using the $L^p$--resolvent estimate, we deduce from H\"older's inequality for $1<p\leqslant 2$
  \begin{align*}
             & \left\|D_j v_{\star}\right\|_{L^p(\R^d,\C^N)}^p
            =  \int_{\R^d}\left|D_j v_{\star}\right|^p
            =  \int_{\R^d}\left|D_j v_{\star}\right|^p \left|v_{\star}\right|^{-\frac{p(2-p)}{2}} \left|v_{\star}\right|^{\frac{p(2-p)}{2}} \\
    \leqslant& \bigg(\int_{\R^d}\left|D_j v_{\star}\right|^2 \left|v_{\star}\right|^{p-2}\bigg)^{\frac{p}{2}}
               \bigg(\int_{\R^d}\left|v_{\star}\right|^{p}\bigg)^{\frac{2-p}{2}}
    \leqslant  \frac{\gamma_A^{-\frac{p}{2}}}{\left(\Re\lambda-\beta_B\right)^{\frac{p}{2}}}\left\|g\right\|_{L^p(\R^d,\C^N)}^p.
  \end{align*}
  Taking the sum over $j$ from $1$ to $d$ and the $p$th root we end up with
  \begin{align*}
      \left|v_{\star}\right|_{W^{1,p}(\R^d,\C^N)}
    = \bigg(\sum_{j=1}^{d}\left\|D_j v_{\star}\right\|_{L^p(\R^d,\C^N)}^p\bigg)^{\frac{1}{p}}
    \leqslant \frac{d^{\frac{1}{p}}\gamma_A^{-\frac{1}{2}}}{\left(\Re\lambda-\beta_B\right)^{\frac{1}{2}}}\left\|g\right\|_{L^p(\R^d,\C^N)}.
  \end{align*}
\end{proof}

Recall the following definition of a dissipative operator, \cite[II.3.13 Definition]{EngelNagel2000}.

\begin{definition}
  The operator $\L_{\infty}:L^p(\R^d,\C^N)\supseteq\D^p_{\mathrm{loc}}(\L_0)\rightarrow L^p(\R^d,\C^N)$ with $1<p<\infty$, is called \begriff{$L^p$-dissipative} 
  (or \begriff{dissipative} in $L^p(\R^d,\C^N)$) if
  \begin{align*}
    \left\|\left(\lambda-\L_{\infty}\right)v\right\|_{L^p(\R^d,\C^N)} \geqslant \lambda\left\|v\right\|_{L^p(\R^d,\C^N)},\quad \forall\,\lambda>0\;\forall\,v\in\D^p_{\mathrm{loc}}(\L_0).
  \end{align*}
\end{definition}

A direct consequence of Theorem \ref{thm:UniquenessInDpmax} is that the operator $\L_{\infty}$ is dissipative in $L^p(\R^d,\C^N)$ for $1<p<\infty$, provided 
that $\beta_B$ from \eqref{equ:betaB} satisfies $\beta_B\leqslant 0$.

\begin{corollary}[$L^p$-dissipativity of $\L_{\infty}$]\label{cor:Lpdissipativity}
  Let the assumptions \eqref{cond:A4DC} and \eqref{cond:A5} be satisfied for $1<p<\infty$ and $\K=\C$. 
  If $-B$ is dissipative, i.e. \eqref{equ:betaB} is satisfied for some $\beta_B\leqslant 0$, then the operator 
  $\L_{\infty}:L^p(\R^d,\C^N)\supseteq\D^p_{\mathrm{loc}}(\L_0)\rightarrow L^p(\R^d,\C^N)$ is $L^p$-dissipative.
\end{corollary}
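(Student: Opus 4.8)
The plan is to deduce Corollary \ref{cor:Lpdissipativity} directly from the sharp $L^p$-resolvent estimate in Theorem \ref{thm:UniquenessInDpmax}. The hypotheses \eqref{cond:A4DC} and \eqref{cond:A5} for $1<p<\infty$ and $\K=\C$ are exactly those of that theorem, and we additionally assume that $-B$ is dissipative, i.e. \eqref{equ:betaB} holds with some $\beta_B\leqslant 0$; hence all the required machinery is already available and nothing new has to be proved from scratch.

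First I would fix an arbitrary $\lambda>0$ and an arbitrary $v\in\D^p_{\mathrm{loc}}(\L_0)$, and set $g:=(\lambda I-\L_{\infty})v$. By the very definition of $\D^p_{\mathrm{loc}}(\L_0)$ — and since $B$ enters only as a bounded constant-coefficient term — we have $g\in L^p(\R^d,\C^N)$, so $v$ is a solution in $\D^p_{\mathrm{loc}}(\L_0)$ of the resolvent equation $(\lambda I-\L_{\infty})w=g$. Because $\beta_B\leqslant 0<\lambda$, the spectral parameter satisfies $\Re\lambda=\lambda>\beta_B$, so Theorem \ref{thm:UniquenessInDpmax} applies: $v$ is the unique such solution and it obeys $\left\|v\right\|_{L^p(\R^d,\C^N)}\leqslant(\Re\lambda-\beta_B)^{-1}\left\|g\right\|_{L^p(\R^d,\C^N)}=(\lambda-\beta_B)^{-1}\left\|g\right\|_{L^p(\R^d,\C^N)}$.

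Finally I would use the sign condition $\beta_B\leqslant 0$ to conclude $\lambda-\beta_B\geqslant\lambda>0$, hence $(\lambda-\beta_B)^{-1}\leqslant\lambda^{-1}$, so that $\lambda\left\|v\right\|_{L^p(\R^d,\C^N)}\leqslant\left\|g\right\|_{L^p(\R^d,\C^N)}=\left\|(\lambda I-\L_{\infty})v\right\|_{L^p(\R^d,\C^N)}$. Since $\lambda>0$ and $v\in\D^p_{\mathrm{loc}}(\L_0)$ were arbitrary, this is precisely the $L^p$-dissipativity of $\L_{\infty}$ as stated in the preceding definition.

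As for the main obstacle: there is essentially none at this stage — all the analytic substance (the role of the $L^p$-dissipativity condition \eqref{cond:A4DC}, Lemma \ref{lem:LemmaForUniquenessInDpmax}, the cut-off functions $\eta_n$ and the limiting argument via dominated convergence and Fatou) is already packaged inside Theorem \ref{thm:UniquenessInDpmax}. The only points deserving (minimal) care are the bookkeeping that $g=(\lambda I-\L_{\infty})v$ genuinely lies in $L^p(\R^d,\C^N)$ so that the theorem is applicable, and the observation that it is exactly the assumption $\beta_B\leqslant 0$ which upgrades the resolvent bound with constant $(\lambda-\beta_B)^{-1}$ to the dissipativity bound with the sharp constant $\lambda^{-1}$.
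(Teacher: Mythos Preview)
Your argument is correct and is exactly the approach the paper has in mind: the corollary is stated there as a direct consequence of Theorem \ref{thm:UniquenessInDpmax}, and your deduction---applying the resolvent estimate with $\Re\lambda=\lambda>0\geqslant\beta_B$ and then using $\lambda-\beta_B\geqslant\lambda$ to sharpen the constant to $\lambda^{-1}$---is precisely that consequence spelled out. No gaps.
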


%
%
\sect{Identification problem for complex Ornstein-Uhlenbeck operators in \texorpdfstring{$L^p(\R^d,\C^N)$}{Lp(Rd,CN)}}
\label{sec:IdentificationProblemForTheComplexOrnsteinUhlenbeckOperatorInLp}

We now identify the maximal domain $\D(A_p)$ of the generator $A_p:L^p(\R^d,\C^N)\supseteq\D(A_p)\rightarrow L^p(\R^d,\C^N)$ associated to the semigroup 
$\left(T(t)\right)_{t\geqslant 0}$ from \eqref{equ:OrnsteinUhlenbeckSemigroupLp} for $1<p<\infty$. Problems of this type are called \begriff{identification problems}.

The next theorem shows that the maximal domain $\D(A_p)$ coincides with $\D^p_{\mathrm{loc}}(\L_0)$ and that the \begriff{formal} operator $\L_{\infty}$ 
coincides with the \begriff{abstract} operator $A_p$ on their common domain $\D(A_p)=\D^p_{\mathrm{loc}}(\L_0)$. Therefore, the generator $A_p$ 
is called the \begriff{maximal realization} (or \begriff{maximal extension}) of $\L_{\infty}$ in $L^p(\R^d,\C^N)$ for every $1<p<\infty$ with 
\begriff{maximal domain} $\D(A_p)=\D^p_{\mathrm{loc}}(\L_0)$.

The following Theorem \ref{thm:LpMaximalDomainPart1} is an extension of \cite[Theorem 5.19]{Otten2014} to general matrices $B\in\C^{N,N}$. The main idea 
for the first part of the proof comes from \cite[Proposition 2.2 and 3.2]{Metafune2001}. For identification problems concerning the original scalar real-valued 
Ornstein-Uhlenbeck operator 
\begin{align*}
  \left[\L v\right](x) = \trace(Q D^2 v(x)) + \left\langle Sx,\nabla v(x)\right\rangle-b v(x),\,x\in\R^d
\end{align*}
with $Q\in\R^{d,d}$, $Q>0$, $Q=Q^T$, $S\in\R^{d,d}$ and $b\in\R$ we refer to \cite{MetafunePallaraVespri2005} and \cite{PruessRhandiSchnaubelt2006} 
for $L^p$-spaces, to \cite{MetafunePruessRhandiSchnaubelt2002} for $L^p$-spaces with an invariant measure and to \cite{DaPratoLunardi1995} for 
$\Cb^{\alpha}$-spaces.

\begin{theorem}[Maximal domain, local version]\label{thm:LpMaximalDomainPart1}
  Let the assumptions \eqref{cond:A8B}, \eqref{cond:A4DC} and \eqref{cond:A5} be satisfied for $1<p<\infty$ and $\K=\C$, then 
  \begin{align*}
    \D(A_p)=\D^p_{\mathrm{loc}}(\L_0)
  \end{align*} 
  is the maximal domain of $A_p$, where $\D^p_{\mathrm{loc}}(\L_0)$ is defined by
  \begin{align}
    \label{equ:localDomain}
    \D^p_{\mathrm{loc}}(\L_0):=\left\{v\in W^{2,p}_{\mathrm{loc}}(\R^d,\C^N)\cap L^p(\R^d,\C^N)\mid A\triangle v+\left\langle S\cdot,\nabla v\right\rangle\in L^p(\R^d,\C^N)\right\}.
  \end{align} 
  In particular, $A_p$ is the maximal realization of $\L_{\infty}$ in $L^p(\R^d,\C^N)$, i.e. $A_p v = \L_{\infty} v$ for every $v\in\D(A_p)$.
\end{theorem}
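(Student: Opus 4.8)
The plan is to establish the two inclusions separately, precisely as outlined in Section~\ref{sec:AssumptionsAndOutlineOfResults}, by gluing together the four ingredients already available: the core property of $\S(\R^d,\C^N)$ (Theorem~\ref{thm:CoreForTheInfinitesimalGenerator}), the closedness of $\L_\infty$ on $\D^p_{\mathrm{loc}}(\L_0)$ (Lemma~\ref{lem:ClosednessOfL0}), the unique solvability of the resolvent equation for $A_p$ (Corollary~\ref{cor:OrnsteinUhlenbeckLpSolvabilityUniqueness}), and the $L^p$-resolvent estimate together with uniqueness for $\L_\infty$ (Theorem~\ref{thm:UniquenessInDpmax}).

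For the inclusion $\D(A_p)\subseteq\D^p_{\mathrm{loc}}(\L_0)$ I would first note that $\S\subseteq\D^p_{\mathrm{loc}}(\L_0)$: every $\phi\in\S$ lies in $C^\infty\cap L^p\subseteq W^{2,p}_{\mathrm{loc}}\cap L^p$, and $\L_0\phi=\L_\infty\phi+B\phi\in\S\subseteq L^p$ because $\L_\infty\phi\in\S$ was shown inside the proof of Theorem~\ref{thm:CoreForTheInfinitesimalGenerator}. Then, given $v\in\D(A_p)$, the core property furnishes a sequence $(\phi_n)\subseteq\S$ with $\phi_n\to v$ and $A_p\phi_n\to A_p v$ in $L^p$, and since $A_p\phi_n=\L_\infty\phi_n$ on $\S$, the sequence $(\phi_n)\subseteq\D^p_{\mathrm{loc}}(\L_0)$ satisfies $\phi_n\to v$ and $\L_\infty\phi_n\to A_p v$ in $L^p$. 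Closedness of $\L_\infty$ (Lemma~\ref{lem:ClosednessOfL0}) then gives $v\in\D^p_{\mathrm{loc}}(\L_0)$ and $\L_\infty v=A_p v$.

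For the converse inclusion $\D^p_{\mathrm{loc}}(\L_0)\subseteq\D(A_p)$ I would fix once and for all a spectral parameter $\lambda\in\C$ with $\Re\lambda>\max\{-\bzero,\beta_B\}$, where $\bzero$ is from \eqref{equ:aminamaxazerobzero} and $\beta_B$ from \eqref{equ:betaB}; this half-plane is nonempty, and on it both Corollary~\ref{cor:OrnsteinUhlenbeckLpSolvabilityUniqueness} and Theorem~\ref{thm:UniquenessInDpmax} are applicable. Given $v\in\D^p_{\mathrm{loc}}(\L_0)$, I set $g:=(\lambda I-\L_\infty)v$, which lies in $L^p$ since $\L_0 v\in L^p$ forces $\L_\infty v=\L_0 v-Bv\in L^p$. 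By Corollary~\ref{cor:OrnsteinUhlenbeckLpSolvabilityUniqueness} there is $w\in\D(A_p)$ with $(\lambda I-A_p)w=g$; by the inclusion already proved, $w\in\D^p_{\mathrm{loc}}(\L_0)$ and $A_p w=\L_\infty w$, hence $(\lambda I-\L_\infty)w=g$ as well. Now $v$ and $w$ both belong to $\D^p_{\mathrm{loc}}(\L_0)$ and solve the same resolvent equation for $\L_\infty$, so the uniqueness part of Theorem~\ref{thm:UniquenessInDpmax} yields $v=w\in\D(A_p)$ and $A_p v=\L_\infty v$. Together with the first inclusion this gives $\D(A_p)=\D^p_{\mathrm{loc}}(\L_0)$ and $A_p v=\L_\infty v$ for all $v$ in this common domain.

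The main obstacle is not in this argument but in the inputs it consumes — the delicate dominated-convergence proof that $\S$ is a core, the local elliptic $L^p$-regularity behind the closedness of $\L_\infty$, and above all the new $L^p$-dissipativity condition \eqref{cond:A4DC} that drives the resolvent estimate of Theorem~\ref{thm:UniquenessInDpmax}. The only genuine care needed here is bookkeeping: choosing $\lambda$ in the overlap of the two regions where solvability for $A_p$ and uniqueness for $\L_\infty$ hold, and feeding the already-proved inclusion $\D(A_p)\subseteq\D^p_{\mathrm{loc}}(\L_0)$ back in so that the $A_p$-resolvent solution is recognised as an $\L_\infty$-resolvent solution before uniqueness is invoked.
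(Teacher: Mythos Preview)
Your proposal is correct and follows essentially the same approach as the paper's own proof: the first inclusion via the core property of $\S$ combined with closedness of $\L_\infty$, and the reverse inclusion by comparing the $A_p$-resolvent solution with the given $v$ through the uniqueness statement of Theorem~\ref{thm:UniquenessInDpmax}, with $\lambda$ chosen so that $\Re\lambda>\max\{-\bzero,\beta_B\}$. The paper's write-up is slightly terser but uses exactly the same ingredients in the same order.
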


\begin{proof}
  $\D(A_p)\subseteq\D^p_{\mathrm{loc}}(\L_0)$: Let $v\in\D(A_p)$. Since \eqref{cond:A4DC} implies \eqref{cond:A2}, we deduce from Theorem \ref{thm:CoreForTheInfinitesimalGenerator} 
  that the Schwartz space $\S$ is dense in $\D(A_p)$ with respect to the graph norm $\left\|\cdot\right\|_{A_p}$, i.e.
  \begin{align*}
    \exists\,\left(v_n\right)_{n\in\N}\subset\S:\;\left\|v_n-v\right\|_{A_p}\rightarrow 0\text{ as }n\to\infty.
  \end{align*}
  Therefore, we obtain from the definition of the graph norm
  \begin{align*}
    \left\|v_n-v\right\|_{L^p}\rightarrow 0\text{ as }n\to\infty
  \end{align*}
  and
  \begin{align*}
    \left\|\L_{\infty} v_n-A_p v\right\|_{L^p}=\left\|A_p v_n-A_p v\right\|_{L^p}\rightarrow 0\text{ as }n\to\infty,
  \end{align*}
  since $A_p\phi=\L_{\infty}\phi$ for every $\phi\in\S$ and $v_n\in\S$. In particular we have $A_p v\in L^p(\R^d,\C^N)$ because $v\in\D(A_p)$. 
  Since obviously $\S\subseteq\D^p_{\mathrm{loc}}(\L_0)$, we have $v_n\in\D^p_{\mathrm{loc}}(\L_0)$ for every $n\in\N$. Thus, we deduce
  $v\in\D^p_{\mathrm{loc}}(\L_0)$ and $\L_{\infty} v=A_p v$ from the closedness of $\L_{\infty}:\D^p_{\mathrm{loc}}(\L_0)\rightarrow L^p(\R^d,\C^N)$ 
  from Lemma \ref{lem:ClosednessOfL0}.
  
  $\D(A_p)\supseteq\D^p_{\mathrm{loc}}(\L_0)$: Let $v\in\D^p_{\mathrm{loc}}(\L_0)$ and choose $\lambda\in\C$ with $\Re\lambda>\max\{-\bzero,\beta_B\}$, where 
  $\bzero$ is from \eqref{equ:aminamaxazerobzero} and $\beta_B$ from \eqref{equ:betaB}. Defining $g:=\left(\lambda I -\L_{\infty}\right)v$ we infer from 
  $v\in\D^p_{\mathrm{loc}}(\L_0)$ that $g\in L^p(\R^d,\C^N)$. Now, an application of Corollary \ref{cor:OrnsteinUhlenbeckLpSolvabilityUniqueness} yields a unique solution 
  $v_{\star}\in\D(A_p)$ of $\left(\lambda I-A_p\right)v_{\star}=g$. Since $\D(A_p)\subseteq\D^p_{\mathrm{loc}}(\L_0)$ we conclude $v_{\star}\in\D^p_{\mathrm{loc}}(\L_0)$ 
  and $A_p v_{\star}=\L_{\infty} v_{\star}$. Thus, we have
  \begin{align*}
    \left(\lambda I-\L_{\infty}\right)v_{\star}=g\quad\text{and}\quad\left(\lambda I-\L_{\infty}\right)v=g.
  \end{align*}
  From the uniqueness of the resolvent equation for $\L_{\infty}$ from Theorem \ref{thm:UniquenessInDpmax} we deduce $v=v_{\star}$ in $L^p(\R^d,\C^N)$. 
  Since $v,v_{\star}$ coincide in $L^p(\R^d,\C^N)$, $v,v_{\star}\in\D^p_{\mathrm{loc}}(\L_0)$ and $v_{\star}\in\D(A_p)$, we conclude from the inclusion 
  $\D(A_p)\subseteq\D^p_{\mathrm{loc}}(\L_0)\subseteq L^p(\R^d,\C^N)$ that $v\in\D(A_p)$ and $\L_{\infty} v=A_p v$.
\end{proof}

In the following we summarize some extensions and further results concerning Theorem \ref{thm:LpMaximalDomainPart1}, which so far have only partially 
be completed.

\textbf{A superset of the domain of $\L_{\infty}$.}
The $L^p$-resolvent estimates for $A_p$ from \cite[Theorem 5.8 and 6.8]{Otten2014} and \cite[Theorem 5.7]{Otten2014a} show under the assumptions 
\eqref{cond:A8B}, \eqref{cond:A2} and \eqref{cond:A5} that $\D(A_p)\subseteq W^{1,p}(\R^d,\C^N)$ for every $1\leqslant p<\infty$. Combining this result 
with Theorem \ref{thm:LpMaximalDomainPart1}, we obtain
\begin{align*}
  \D^p_{\mathrm{loc}}(\L_0) = \D(A_p) \subseteq W^{1,p}(\R^d,\C^N),\, 1<p<\infty,
\end{align*}  
and therefore
\begin{align*}
  \D^p_{\mathrm{loc}}(\L_0)=\left\{v\in W^{2,p}_{\mathrm{loc}}(\R^d,\C^N)\cap W^{1,p}(\R^d,\C^N)\mid \L_0 v\in L^p(\R^d,\C^N)\right\}
\end{align*}
for $1<p<\infty$. Note that this does not directly follow from the $L^p$-resolvent estimates for $\L_{\infty}$ because Theorem \ref{thm:UniquenessInDpmax} guarantees 
the inclusion $\D^p_{\mathrm{loc}}(\L_0)\subseteq W^{1,p}(\R^d,\C^N)$ only for $1<p\leqslant 2$. This is an important observation concerning the $L^p$-resolvent 
estimates for $A_p$ and $\L_{\infty}$.

\textbf{The identification problem in weighted $L^p$-spaces.}
To extend Theorem \ref{thm:LpMaximalDomainPart1} to exponentially weighted $L^p$-spaces, we should clarify how far the results from 
Corollary \ref{cor:OrnsteinUhlenbeckLpSolvabilityUniqueness}, Theorem \ref{thm:CoreForTheInfinitesimalGenerator}, Lemma \ref{lem:ClosednessOfL0} 
and Theorem \ref{thm:UniquenessInDpmax} can be transfered to the weighted $L^p$-case. This question is motivated by \cite{Otten2014a}, where the 
differential operator $\L_{\infty}$ has been analyzed in exponentially weighted $L^p$-spaces. 
Following \cite{Otten2014a}, we consider positive, radial weight functions $\theta\in C(\R^d,\R)$ of exponential growth rate $\eta\geqslant 0$, i. e., see \cite{ZelikMielke2009},
\begin{align*}
  \exists\,C_{\theta}>0:\;\theta(x+y)\leqslant C_{\theta}\theta(x)e^{\eta|y|}\;\forall\,x,y\in\R^d.
\end{align*}
We then introduce the exponentially weighted Lebesgue and Sobolev spaces via
\begin{align*}
  L_{\theta}^{p}(\R^d,\C^N)   :=& \{v\in L^1_{\mathrm{loc}}(\R^d,\C^N)\mid \left\|\theta v\right\|_{L^p}<\infty\}, \\
  W_{\theta}^{k,p}(\R^d,\C^N) :=& \{v\in L^p_{\theta}(\R^d,\C^N)\mid D^{\beta}v\in L^p_{\theta}(\R^d,\C^N)\;\forall\,\left|\beta\right|\leqslant k\},
\end{align*}
with norms
\begin{align*}
  \left\|v\right\|_{L^p_{\theta}(\R^d,\C^N)} :=& \left\|\theta v\right\|_{L^p(\R^d,\C^N)} := \left(\int_{\R^d}\left|\theta(x)v(x)\right|^p dx\right)^{\frac{1}{p}},\\
  \left\|v\right\|_{W^{k,p}_{\theta}(\R^d,\C^N)} :=& \bigg(\sum_{0\leqslant |\beta|\leqslant k}\left\|D^{\beta}v\right\|_{L^p_{\theta}(\R^d,\C^N)}^p\bigg)^{\frac{1}{p}},
\end{align*}
for every $1\leqslant p<\infty$, $k\in\N_0$ and multiindex $\beta\in\N_0^d$. Assuming \eqref{cond:A8B}, \eqref{cond:A2} and \eqref{cond:A5} for $\K=\C$ 
and $1\leqslant p<\infty$ it is proved in \cite[Theorem 5.3]{Otten2014a} that the family of mappings $\left(T(t)\right)_{t\geqslant 0}$ from \eqref{equ:OrnsteinUhlenbeckSemigroupLp} 
is a strongly continuous semigroup on $L^p_{\theta}(\R^d,\C^N)$ for every positive, radial weight function $\theta\in C(\R^d,\R)$ of exponential 
growth rate $\eta\geqslant 0$ satisfying additionally
\begin{align}
  \lim_{|\psi|\to 0}\sup_{x\in\R^d}\left|\frac{\theta(x+\psi)-\theta(x)}{\theta(x)}\right|=0.\tag{W1}\label{equ:WeightFunctionProp4}
\end{align}
This justifies to introduce the infinitesimal generator $(A_{p,\theta},\D(A_{p,\theta}))$. Similar to the unweighted case an application of general results from 
abstract semigroup theory allows us to transfer the result from Corollary \ref{cor:OrnsteinUhlenbeckLpSolvabilityUniqueness} to the $L^p_{\theta}$-case. This is proved 
in \cite[Corollary 5.5]{Otten2014a}. Under the same assumptions we can show that the Schwartz space $\S(\R^d,\C^N)$ is a core of $(A_{p,\theta},\D(A_{p,\theta}))$ 
which yields an extension of Theorem \ref{thm:CoreForTheInfinitesimalGenerator} to the $L^p_{\theta}$-case. In the proof, there one considers 
\begin{align*}
  h_t\theta(x):=\theta(x)f_t(x):=\theta(x)\frac{T(t)\phi(x)-\phi(x)}{t},\quad h(x):=\theta(x)f(x):=\theta(x)\L_{\infty}\phi(x).
\end{align*}
for the application of Lebesgue's dominated convergence theorem in $L^p$ from \cite[Satz 1.23]{Alt2006} and deduces that $h_t,h\in L^p(\R^d,\C^N)$ for $t>0$ 
and $h_t\rightarrow h$ in $L^p(\R^d,\C^N)$ as $t\downarrow 0$. Let us now consider the differential operator
\begin{align*}
  \L_{\infty}:L^p_{\theta}(\R^d,\C^N)\supseteq\D^p_{\theta,\mathrm{loc}}(\L_0)\rightarrow L^p_{\theta}(\R^d,\C^N)
\end{align*}
in $L^p_{\theta}(\R^d,\C^N)$ on its domain
\begin{align*}
  \D^p_{\theta,\mathrm{loc}}(\L_{0}):=&\left\{v\in W^{2,p}_{\mathrm{loc}}(\R^d,\C^N)\cap L^p_{\theta}(\R^d,\C^N)\mid A\triangle v+\left\langle S\cdot,\nabla v\right\rangle\in L^p_{\theta}(\R^d,\C^N)\right\} \\
                                   =&\left\{v\in W^{2,p}_{\mathrm{loc}}(\R^d,\C^N)\cap L^p_{\theta}(\R^d,\C^N)\mid \L_0 v\in L^p_{\theta}(\R^d,\C^N)\right\}.
\end{align*}
Assuming \eqref{cond:A3} for $\K=\C$ the closedness of the operator $\L_{\infty}$ in $L^p_{\theta}(\R^d,\C^N)$ for $1<p<\infty$ can be proved by the same arguments 
as in Lemma \ref{lem:ClosednessOfL0} using the continuity of the weight function $\theta$. This leads to an extension of Lemma \ref{lem:ClosednessOfL0} 
to the weighted $L^p$-case. To prove $L^p_{\theta}$-resolvent estimates similar to Theorem \ref{thm:UniquenessInDpmax} we must multiply \eqref{equ:ResolventEquationVStarG} 
from left by $\theta\eta_n^2\overline{v_{\star}}^T\left|v_{\star}\right|^{p-2}$. In this case, the integration by parts formula requires more smoothness 
of the weight function, i. e. $\theta\in C^1(\R^d,\R)$, and causes additional terms that must be estimated. We expect this to work under our assumptions 
on the weight function $\theta$ and to obtain an extension of Theorem \ref{thm:UniquenessInDpmax} to the $L^p_{\theta}$-case. Therefore, following und using 
the same arguments as in the proof of Theorem \ref{thm:LpMaximalDomainPart1} we can identify the domain $\D(A_{p,\theta})$ of the infinitesimal generator 
$A_{p,\theta}$ as follows
\begin{align*}
  \D(A_{p,\theta})=\D^p_{\theta,\mathrm{loc}}(\L_0).
\end{align*}
for every positive, radial weight function of exponential grwoth rate $\eta\geqslant 0$ satisfying \eqref{equ:WeightFunctionProp4}. A detailed proof has 
not been carried out. But, it has already been shown in \cite[Theorem 5.7]{Otten2014a}, that $\D(A_{p,\theta})\subseteq W^{1,p}_{\theta}(\R^d,\C^N)$.

%
%
\sect{A second characterization of the domain in \texorpdfstring{$L^p(\R^d,\C^N)$}{Lp(Rd,CN)}}
\label{sec:ExtensionsAndFurtherResults}

We now prove a second characterization of the maximal domain $\D(A_p)$ for the generator $A_p$. Some details of its proof are left out in order 
to keep the size of the present paper within reasonable bounds. For full details see \cite[Section 5.7-5.8]{Otten2014}.

Theorem \ref{equ:MaximalDomainPart2} shows that the maximal domain $\D(A_p)$ for the generator $A_p$ -- and therefore, by Theorem \ref{thm:LpMaximalDomainPart1}, 
also the domain $\D^p_{\mathrm{loc}}(\L_0)$ for the perturbed Ornstein-Uhlenbeck operator $\L_{\infty}$ -- coincides with the intersection of the domain 
\begin{align*}
  \D^p_{\mathrm{max}}(\L_0^{\mathrm{diff}}):=W^{2,p}(\R^d,\C^N),
\end{align*}
belonging to the diffusion part $[\L_0^{\mathrm{diff}}v](x)=A\triangle v(x)$, \cite[Lemma 6.1.1]{Lunardi1995}, and the domain
\begin{align*}
  \D^p_{\mathrm{max}}(\L_0^{\mathrm{drift}}):=\left\{v\in L^p(\R^d,\C^N)\mid \left\langle S\cdot,\nabla v\right\rangle\in L^p(\R^d,\C^N)\right\},
\end{align*}
belonging to the drift part $\left[\L_0^{\mathrm{drift}}v\right](x):=\left\langle Sx,\nabla v(x)\right\rangle$, \cite[Proposition 2.2]{Metafune2001}. Thus, we have
\begin{align*}
  \D(A_p)=\D^p_{\mathrm{loc}}(\L_0)=\D^p_{\mathrm{loc}}(\L_0^{\mathrm{diff}}+\L_0^{\mathrm{drift}})=\D^p_{\mathrm{max}}(\L_0^{\mathrm{diff}})\cap\D^p_{\mathrm{max}}(\L_0^{\mathrm{drift}}).
\end{align*}

We use the first characterization from Theorem \ref{thm:LpMaximalDomainPart1} and require in addition $L^p$-regularity results for mild solutions of the 
Cauchy problem associated with $A_p$, given by
\begin{align*}
  v_t(t) &= A_p v - g,\,t\in(0,T],\\
    v(0) &= v_0,
\end{align*}
for some $v_0,g\in L^p(\R^d,\C^N)$ and $T>0$. The spatial $L^p$-regularity for the homogeneous Cauchy problem (i.e. $g=0$) is proved in \cite[Theorem 5.1]{Otten2014a} 
for any fixed $t\in(0,T]$. The space-time $L^p$-regularity for the inhomogeneous Cauchy problem with zero initial data (i.e. $v_0=0$) is shown in \cite[Theorem 5.24]{Otten2014}. 
We emphasize that the proof of \cite[Theorem 5.24]{Otten2014} uses a generalization of \cite[IV. Theorem 9.1]{LadyzenskajaSolonnikovUralceva1968} to the complex-valued case, 
which, however, has not been carried out in detail. The main idea of the proof comes from \cite[Theorem 1]{MetafunePallaraVespri2005}, where the following scalar real-valued 
Ornstein-Uhlenbeck operator is considered
\begin{align*}
  \left[\L v\right](x) = \trace(Q D^2 v(x)) + \left\langle B(x)+F(x),\nabla v(x)\right\rangle,\,x\in\R^d.
\end{align*}
Here $Q\in C^1_{\mathrm{b}}(\R^d,\R^{d,d})$, $Q>0$, $Q=Q^T$, $B\in C(\R^d,\R^d)$ is (globally) Lipschitz continuous and $F\in C_{\mathrm{b}}(\R^d,\R^d)$. 
The following result is taken from \cite[Theorem 5.25]{Otten2014}.

\begin{theorem}[Maximal domain, global version]\label{equ:MaximalDomainPart2}
  Let the assumptions \eqref{cond:A8B}, \eqref{cond:A4DC} and \eqref{cond:A5} be satisfied for $1<p<\infty$ and $\K=\C$, then 
  \begin{align*}
    \D(A_p)=\D^p_{\mathrm{max}}(\L_0),
  \end{align*}
  where $\D^p_{\mathrm{max}}(\L_0)$ is defined by
  \begin{align*}
    \D^p_{\mathrm{max}}(\L_0):=\left\{v\in W^{2,p}(\R^d,\C^N)\mid \left\langle S\cdot,\nabla v\right\rangle\in L^p(\R^d,\C^N)\right\}.
  \end{align*}
\end{theorem}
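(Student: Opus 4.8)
The plan is to reduce the statement, via the first characterization in Theorem~\ref{thm:LpMaximalDomainPart1}, to the purely ``elliptic'' identity $\D^p_{\mathrm{loc}}(\L_0)=\D^p_{\mathrm{max}}(\L_0)$, and then to obtain the non-trivial inclusion from $L^p$-regularity for the abstract Cauchy problem associated with $A_p$. The inclusion $\D^p_{\mathrm{max}}(\L_0)\subseteq\D^p_{\mathrm{loc}}(\L_0)$ is immediate: if $v\in W^{2,p}(\R^d,\C^N)$ with $\langle S\cdot,\nabla v\rangle\in L^p(\R^d,\C^N)$, then $v\in W^{2,p}_{\mathrm{loc}}\cap L^p$ and $A\triangle v\in L^p$, hence $A\triangle v+\langle S\cdot,\nabla v\rangle\in L^p$, so $v\in\D^p_{\mathrm{loc}}(\L_0)=\D(A_p)$. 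So the content is the reverse inclusion $\D(A_p)=\D^p_{\mathrm{loc}}(\L_0)\subseteq\D^p_{\mathrm{max}}(\L_0)$, and since $A_p v=\L_{\infty}v=A\triangle v+\langle S\cdot,\nabla v\rangle-Bv$ for $v\in\D(A_p)$, it suffices to prove that every $v\in\D(A_p)$ already lies in $W^{2,p}(\R^d,\C^N)$; the drift term $\langle S\cdot,\nabla v\rangle=A_p v-A\triangle v+Bv$ is then automatically in $L^p(\R^d,\C^N)$.

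To show $v\in W^{2,p}(\R^d,\C^N)$ for $v\in\D(A_p)$, I would use the Duhamel-type identity coming from $\tfrac{d}{dt}T(t)v=T(t)A_p v$: for any fixed $t_0>0$,
\begin{align*}
  v = T(t_0)v - \int_0^{t_0}T(t)\,A_p v\,dt .
\end{align*}
Setting $h:=A_p v\in L^p(\R^d,\C^N)$, the integral $W(t_0):=\int_0^{t_0}T(t)h\,dt=\int_0^{t_0}T(t_0-\sigma)h\,d\sigma$ is precisely the mild solution, evaluated at $t_0$, of the inhomogeneous Cauchy problem with zero initial datum and time-independent right-hand side $h$. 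Now $T(t_0)v\in W^{2,p}(\R^d,\C^N)$ for every $t_0>0$ by the spatial $L^p$-regularity of the homogeneous Cauchy problem, \cite[Theorem 5.1]{Otten2014a}, \cite[Theorem 6.3]{Otten2014}, while the space-time $L^p$-regularity for the inhomogeneous Cauchy problem with zero initial data, \cite[Theorem 5.24]{Otten2014}, gives $W\in L^p\big((0,t_0);W^{2,p}(\R^d,\C^N)\big)$; in particular $W(t_0)\in W^{2,p}(\R^d,\C^N)$ for almost every $t_0>0$. Choosing such a $t_0$, both summands lie in $W^{2,p}(\R^d,\C^N)$, hence so does $v$, which finishes the proof. (Alternatively, one could factor $T(t)$ through the heat semigroup via \eqref{equ:RelationOfTToDiffusionSemigroup}, using that $W^{2,p}$ is the domain of $A\triangle$, but the Cauchy-problem route matches the available inputs most directly.)

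The main obstacle is the space-time estimate \cite[Theorem 5.24]{Otten2014} that controls the inhomogeneous term: it is a parabolic maximal-regularity result for the complex, $\C^N$-valued operator $\L_{\infty}$ with unbounded variable drift, whose proof relies on a generalization of the $L^p$ a priori estimates of \cite[IV.~Theorem 9.1]{LadyzenskajaSolonnikovUralceva1968} to the complex-valued system setting—this is where the $L^p$-dissipativity condition \eqref{cond:A4DC} is used—and this generalization has not been carried out in full detail. Everything else (the $L^p$-continuity of $t\mapsto T(t)v$, the Duhamel identity, and the homogeneous regularity from \cite{Otten2014a}) is routine once that input is granted; for the parts omitted here we refer to \cite[Section 5.7--5.8]{Otten2014}.
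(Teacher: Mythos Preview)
Your proof is correct and follows essentially the same route as the paper: reduce via Theorem~\ref{thm:LpMaximalDomainPart1} to showing $v\in W^{2,p}$ for $v\in\D(A_p)$, use the Duhamel identity to write $v=T(t_0)v-\int_0^{t_0}T(t_0-s)A_p v\,ds$, and then combine the spatial regularity of the homogeneous part \cite[Theorem 5.1]{Otten2014a} with the space-time regularity of the inhomogeneous part \cite[Theorem 5.24]{Otten2014} at an almost-every time $t_0$. The only minor inaccuracy is the remark that the $L^p$-dissipativity condition \eqref{cond:A4DC} enters through \cite[Theorem 5.24]{Otten2014}; in fact \eqref{cond:A4DC} is needed here primarily to invoke Theorem~\ref{thm:LpMaximalDomainPart1}, while the parabolic regularity results are stated under the weaker assumptions \eqref{cond:A8B}, \eqref{cond:A2}, \eqref{cond:A5}.
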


\begin{proof}
  Since $\D(A_p)=\D^p_{\mathrm{loc}}(\L_0)$ by Theorem \ref{thm:LpMaximalDomainPart1}, we verify the equality $\D^p_{\mathrm{loc}}(\L_0)=\D^p_{\mathrm{max}}(\L_0)$.\\
  $\supseteq$: Let $v\in\D^p_{\mathrm{max}}(\L_0)$, then $v\in W^{2,p}(\R^d,\C^N)$ implies $v\in W^{2,p}_{\mathrm{loc}}(\R^d,\C^N)$, $v\in L^p(\R^d,\C^N)$ and 
  $A\triangle v\in L^p(\R^d,\C^N)$. From $\left\langle S\cdot,\nabla v\right\rangle\in L^p(\R^d,\C^N)$ we conclude $\L_0 v\in L^p(\R^d,\C^N)$.\\
  $\subseteq$: Let $v\in\D^p_{\mathrm{loc}}(\L_0)$, then $g:=\L_{\infty}v=A_pv\in L^p(\R^d,\C^N)$. Thus, $w(t):=v$ is a classical solution and hence also a mild solution 
  of the Cauchy problem
  \begin{align}
    \label{equ:CauchyProblemforLinfty}
    \begin{split}
      \frac{d}{dt}w(t) &= A_p w(t) - g, \,t\in[0,T],\\
      w(0) &= v.
    \end{split}
  \end{align}
  in the sense of \cite[Definition 5.20 and 5.21]{Otten2014}. On the other hand, since $v\in L^p(\R^d,\C^N)$ and $g\in L^1([0,T],L^p(\R^d,\C^N))$ for every fixed $T>0$, 
  the unique mild solution of \eqref{equ:CauchyProblemforLinfty} is given by
  \begin{align*}
    v = w(t) = T(t)v - \int_{0}^{t}T(t-s)g ds=:w_1(t)+w_2(t),\,t\in[0,T],
  \end{align*}
  where $w_1$ is the mild solution of \eqref{equ:CauchyProblemforLinfty} for $g=0$ und $w_2$ is the mild solution of \eqref{equ:CauchyProblemforLinfty} 
  for $w(0)=0$. From \cite[Theorem 5.1]{Otten2014a}, \cite[Theorem 6.3]{Otten2014} we deduce $w_1(t)\in W^{2,p}(\R^d,\C^N)$ for every $t\in(0,T]$. 
  Similarly, since $g\in L^p([0,T],L^p(\R^d,\C^N))\cong L^p(\R^d\times[0,T],\C^N)$, we obtain from \cite[Theorem 5.24]{Otten2014} 
  that $w_2\in L^p(]0,T[,W^{2,p}(\R^d,\C^N))$, i.e. $w_2(t)\in W^{2,p}(\R^d,\C^N)$ for almost every $t\in(0,T)$. 
  Let us consider such a $\bar{t}\in(0,T)$ satisfying $w_1(\bar{t}),w_2(\bar{t})\in W^{2,p}(\R^d,\C^N)$, then
  \begin{align*}
    v = w(\bar{t}) = T_0(\bar{t})v + \int_{0}^{\bar{t}}T_0(\bar{t}-s)gds = w_1(\bar{t}) + w_2(\bar{t}) \in W^{2,p}(\R^d,\C^N)
  \end{align*}
  and thus we have $A\triangle v\in L^p(\R^d,\C^N)$. Consequently, from $\L_{0}v\in L^p(\R^d,\C^N)$ we conclude
  \begin{align*}
    \left\langle S\cdot,\nabla v\right\rangle = \L_{0}v - A\triangle v \in L^p(\R^d,\C^N),
  \end{align*}
  which means $v\in\D^p_{\mathrm{max}}(\L_0)$. This completes the proof.
\end{proof}

Under the assumptions of Theorem \ref{equ:MaximalDomainPart2} one can prove, see \cite[Corollary 5.26]{Otten2014}, that the norms
\begin{align*}
  \left\|v\right\|_{A_p} :=& \left\|A_p v\right\|_{L^p(\R^d,\C^N)}+\left\|v\right\|_{L^p(\R^d,\C^N)}=\left\|\L_{\infty} v\right\|_{L^p(\R^d,\C^N)}+\left\|v\right\|_{L^p(\R^d,\C^N)}, \\
  \left\|v\right\|_{\L_{\infty}} :=& \left\|v\right\|_{W^{2,p}(\R^d,\C^N)}+\left\|\left\langle S\cdot,\nabla v\right\rangle\right\|_{L^p(\R^d,\C^N)}+\left\|Bv\right\|_{L^p(\R^d,\C^N)},
\end{align*}
are equivalent for $v\in\D^p_{\mathrm{max}}(\L_0)$, i.e. there exist $C_1,C_2\geqslant 1$ such that
\begin{align}
  \label{equ:Normequivalence}
  C_1\left\|v\right\|_{\L_{\infty}} \leqslant \left\|v\right\|_{A_p} \leqslant C_2\left\|v\right\|_{\L_{\infty}}
\end{align}
for every $v\in\D^p_{\mathrm{max}}(\L_0)$. Therefore, we may identify the graph norm $\left\|\cdot\right\|_{A_p}$ with $\left\|\cdot\right\|_{\L_{\infty}}$. 

Taking \eqref{equ:Normequivalence} and Theorem \ref{equ:MaximalDomainPart2} into account, we have shown that
\begin{align*}
  \left(A_p,\D(A_p),\left\|\cdot\right\|_{A_p}\right) = \left(\L_{\infty},\D^p_{\mathrm{max}}(\L_0),\left\|\cdot\right\|_{\L_{\infty}}\right).
\end{align*}

Combining the norm equivalence \eqref{equ:Normequivalence} with the $L^p$-resolvent estimates for $\L_{\infty}$ from Theorem \ref{thm:UniquenessInDpmax} 
we even obtain estimates for $v_{\star}$ in $W^{2,p}(\R^d,\C^N)$ and for $\left\langle S\cdot,\nabla v\right\rangle$ in $L^p(\R^d,\C^N)$. 
This is an extension of Theorem \ref{thm:UniquenessInDpmax} and Theorem \ref{cor:OrnsteinUhlenbeckLpSolvabilityUniqueness}, respectively.

\begin{corollary}[Resolvent Estimates for $A_p$ in $L^p(\R^d,\C^N)$ with $1<p<\infty$]\label{thm:ResolventEstimatesForL0InLp}
  Let the assumptions \eqref{cond:A8B}, \eqref{cond:A4DC} and \eqref{cond:A5} be satisfied for $1<p<\infty$ and $\K=\C$. Moreover, let $\lambda\in\C$ with 
  $\Re\lambda>\max\{\bzero,\beta_B\}$, where $\bzero$ is from \eqref{equ:aminamaxazerobzero} and $\beta_B\in\R$ from \eqref{equ:betaB}. 
  Then for every $g\in L^p(\R^d,\C^N)$ the resolvent equation
  \begin{align*}
    \left(\lambda I-A_p\right)v=g
  \end{align*}
  admits a unique solution $v_{\star}\in\D^p_{\mathrm{max}}(\L_0)$. Moreover, there exists constants $c_i>0$ depending on $A,B,\lambda,d,p,N$, $i=1,2,3,4$, 
  such that $v_{\star}$ satisfies the resolvent estimates
  \begin{align}
    &\left\|v_{\star}\right\|_{L^p(\R^d,\C^N)}\leqslant c_1\left\|g\right\|_{L^p(\R^d,\C^N)}, 
     \label{equ:ResolventEstimateVinLp}\\
    &\left\|v_{\star}\right\|_{W^{1,p}(\R^d,\C^N)}\leqslant c_2\left\|g\right\|_{L^p(\R^d,\C^N)}, 
     \label{equ:ResolventEstimateVinW1p}\\
    &\left\|v_{\star}\right\|_{W^{2,p}(\R^d,\C^N)}\leqslant c_3\left\|g\right\|_{L^p(\R^d,\C^N)}, 
     \label{equ:ResolventEstimateVinW2p}\\
    &\left\|\left\langle S\cdot,\nabla v_{\star}\right\rangle\right\|_{L^p(\R^d,\C^N)}\leqslant c_4\left\|g\right\|_{L^p(\R^d,\C^N)}. \label{equ:ResolventEstimateDriftTermInLp}
  \end{align}
\end{corollary}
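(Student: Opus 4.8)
The plan is simply to assemble the conclusion from three results already established: the unique solvability of the resolvent equation for $A_p$ together with its $L^p$-estimate (Corollary \ref{cor:OrnsteinUhlenbeckLpSolvabilityUniqueness}), the sharp $L^p$-resolvent estimate for $\L_{\infty}$ (Theorem \ref{thm:UniquenessInDpmax}), and the identification $\D(A_p)=\D^p_{\mathrm{max}}(\L_0)$ with the accompanying norm equivalence \eqref{equ:Normequivalence} (Theorem \ref{equ:MaximalDomainPart2}). Note that the hypotheses \eqref{cond:A8B}, \eqref{cond:A4DC}, \eqref{cond:A5} assumed here imply all the assumptions needed for those results, since \eqref{cond:A4DC} implies \eqref{cond:A2}. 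No new estimate has to be proved; the argument is bookkeeping.

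The first step is to check that $\Re\lambda>\max\{\bzero,\beta_B\}$ already guarantees $\Re\lambda>-\bzero$ and $\Re\lambda>\beta_B$, so that both Corollary \ref{cor:OrnsteinUhlenbeckLpSolvabilityUniqueness} and Theorem \ref{thm:UniquenessInDpmax} apply. Testing \eqref{equ:betaB} against a normalized eigenvector of $B$ gives $-\beta_B\leqslant\Re\nu$ for every $\nu\in\sigma(B)$, hence $-\beta_B\leqslant\min_{\nu\in\sigma(B)}\Re\nu=-s(-B)=\bzero$; thus $\beta_B\geqslant-\bzero$ and $\max\{\bzero,\beta_B\}\geqslant-\bzero$. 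Then Corollary \ref{cor:OrnsteinUhlenbeckLpSolvabilityUniqueness} produces a unique $v_{\star}\in\D(A_p)$ with $(\lambda I-A_p)v_{\star}=g$, and Theorem \ref{equ:MaximalDomainPart2} upgrades this to $v_{\star}\in\D^p_{\mathrm{max}}(\L_0)$ with $A_pv_{\star}=\L_{\infty}v_{\star}$. Since $v_{\star}\in\D^p_{\mathrm{loc}}(\L_0)$ then solves $(\lambda I-\L_{\infty})v_{\star}=g$ and $\Re\lambda>\beta_B$, Theorem \ref{thm:UniquenessInDpmax} yields \eqref{equ:ResolventEstimateVinLp} with $c_1=(\Re\lambda-\beta_B)^{-1}$ (the constant $\aone^{d/2}(\Re\lambda+\bzero)^{-1}$ from Corollary \ref{cor:OrnsteinUhlenbeckLpSolvabilityUniqueness} is an admissible alternative).

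For \eqref{equ:ResolventEstimateVinW2p} and \eqref{equ:ResolventEstimateDriftTermInLp} I would feed $v_{\star}\in\D^p_{\mathrm{max}}(\L_0)$ into the norm equivalence \eqref{equ:Normequivalence}. Using $\L_{\infty}v_{\star}=A_pv_{\star}=\lambda v_{\star}-g$ and the bound on $\|v_{\star}\|_{L^p}$ just obtained,
\begin{align*}
  \|v_{\star}\|_{W^{2,p}}+\|\langle S\cdot,\nabla v_{\star}\rangle\|_{L^p}+\|Bv_{\star}\|_{L^p}
  &=\|v_{\star}\|_{\L_{\infty}}\leqslant\|v_{\star}\|_{A_p} \\
  &=\|\L_{\infty}v_{\star}\|_{L^p}+\|v_{\star}\|_{L^p}
  \leqslant\big((|\lambda|+1)c_1+1\big)\|g\|_{L^p},
\end{align*}
so \eqref{equ:ResolventEstimateVinW2p} and \eqref{equ:ResolventEstimateDriftTermInLp} hold with $c_3=c_4=(|\lambda|+1)c_1+1$, and \eqref{equ:ResolventEstimateVinW1p} follows with $c_2=c_3$ because $\|v_{\star}\|_{W^{1,p}}\leqslant\|v_{\star}\|_{W^{2,p}}$ by definition of the Sobolev norms.

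I do not anticipate a genuine obstacle; the only delicate points are the elementary comparison $\max\{\bzero,\beta_B\}\geqslant-\bzero$ (so that Corollary \ref{cor:OrnsteinUhlenbeckLpSolvabilityUniqueness} is legitimately invoked) and the fact that \eqref{equ:Normequivalence} is available precisely on $\D^p_{\mathrm{max}}(\L_0)$, where $v_{\star}$ has just been shown to lie. It is worth remarking that for $1<p\leqslant2$ the gradient estimate of Theorem \ref{thm:UniquenessInDpmax} gives the sharper decay $\|v_{\star}\|_{W^{1,p}}\leqslant d^{1/p}\gamma_A^{-1/2}(\Re\lambda-\beta_B)^{-1/2}\|g\|_{L^p}$, which the crude bound through $\|v_{\star}\|_{W^{2,p}}$ does not see; but since only the existence of constants $c_i$ is claimed, this refinement is optional.
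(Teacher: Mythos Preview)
Your argument is correct and follows the paper's approach of combining Corollary~\ref{cor:OrnsteinUhlenbeckLpSolvabilityUniqueness}, Theorem~\ref{thm:UniquenessInDpmax}, Theorem~\ref{equ:MaximalDomainPart2} and the norm equivalence~\eqref{equ:Normequivalence}. Two small differences are worth noting. First, you explicitly verify $\max\{\bzero,\beta_B\}\geqslant-\bzero$ so that Corollary~\ref{cor:OrnsteinUhlenbeckLpSolvabilityUniqueness} genuinely applies; the paper's proof leaves this implicit. Second, you extract \eqref{equ:ResolventEstimateVinW1p} and \eqref{equ:ResolventEstimateDriftTermInLp} directly from the $\L_{\infty}$-norm (since $\|\cdot\|_{W^{1,p}}\leqslant\|\cdot\|_{W^{2,p}}$ and the drift term is already a summand in $\|\cdot\|_{\L_{\infty}}$), whereas the paper cites external $W^{1,p}$-resolvent estimates for \eqref{equ:ResolventEstimateVinW1p} and derives \eqref{equ:ResolventEstimateDriftTermInLp} by rewriting $\langle S\cdot,\nabla v_{\star}\rangle=\lambda v_{\star}-A\triangle v_{\star}+Bv_{\star}-g$ and bounding each piece. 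Your route is more self-contained; the paper's route gives slightly different (and, for the $W^{1,p}$-bound, potentially sharper in $\lambda$) constants, but since only existence of $c_i$ is claimed this is immaterial. One cosmetic point: in your chain $\|v_{\star}\|_{\L_{\infty}}\leqslant\|v_{\star}\|_{A_p}$ you are tacitly using $C_1\geqslant 1$ from~\eqref{equ:Normequivalence}; the paper keeps the factor $1/C_1$ explicitly, which is safer if one does not want to rely on that normalization.
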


\begin{proof}
  Corollary \ref{cor:OrnsteinUhlenbeckLpSolvabilityUniqueness} implies a unique solution $v_{\star}\in\D(A_p)$ and Theorem \ref{equ:MaximalDomainPart2} 
  implies that $v_{\star}$ belongs to $\D^p_{\mathrm{max}}(\L_0)$. The $L^p$-estimate \eqref{equ:ResolventEstimateVinLp} follows from Theorem \ref{thm:UniquenessInDpmax}, 
  but also from Corollary \ref{cor:OrnsteinUhlenbeckLpSolvabilityUniqueness}, \cite[Theorem 5.7]{Otten2014a} and \cite[Theorem 6.8]{Otten2014}. The 
  $W^{1,p}$-estimate \eqref{equ:ResolventEstimateVinW1p} is proved in \cite[Theorem 5.7]{Otten2014a} and \cite[Theorem 6.8]{Otten2014}. The 
  $W^{2,p}$-estimate \eqref{equ:ResolventEstimateVinW2p} follows from \eqref{equ:Normequivalence}, \eqref{equ:ResolventEstimateVinLp} and 
  Theorem \ref{thm:UniquenessInDpmax}
  \begin{align*}
             &  \left\|v_{\star}\right\|_{W^{2,p}(\R^d,\C^N)}
    \leqslant  \left\|v_{\star}\right\|_{\L_{\infty}} 
    \leqslant  \frac{1}{C_1}\left\|v_{\star}\right\|_{A_p}
            =  \frac{1}{C_1}\left(\left\|A_p v_{\star}\right\|_{L^p(\R^d,\C^N)}+\left\|v_{\star}\right\|_{L^p(\R^d,\C^N)}\right) \\
            =&  \frac{1}{C_1}\left(\left\|\lambda v_{\star}-g\right\|_{L^p(\R^d,\C^N)}+\left\|v_{\star}\right\|_{L^p(\R^d,\C^N)}\right)
    \leqslant \frac{1}{C_1}\left((1+|\lambda|)\left\|v_{\star}\right\|_{L^p(\R^d,\C^N)}+\left\|g\right\|_{L^p(\R^d,\C^N)}\right) \\
    \leqslant&  \frac{1}{C_1}\left((1+|\lambda|)c_1+1\right)\left\|g\right\|_{L^p(\R^d,\C^N)}=c_3\left\|g\right\|_{L^p(\R^d,\C^N)}.
  \end{align*}
  Finally, the $L^p$-estimate \eqref{equ:ResolventEstimateDriftTermInLp} for the drift term $\left\langle S\cdot,\nabla v_{\star}\right\rangle$ follows from 
  \eqref{equ:ResolventEstimateVinLp}, \eqref{equ:ResolventEstimateVinW2p} and the inequality $\left\|A\triangle v\right\|_{L^p(\R^d,\C^N)}\leqslant C_3\left\|v\right\|_{W^{2,p}(\R^d,\C^N)}$, 
  see \cite[Section 5.6]{Otten2014},
  \begin{align*}
             & \left\|\left\langle S\cdot,\nabla v_{\star}\right\rangle\right\|_{L^p(\R^d,\C^N)}
            =  \left\|\lambda v_{\star} - A\triangle v_{\star} + B v_{\star} - g\right\|_{L^p(\R^d,\C^N)} \\
    \leqslant& |\lambda I+B|\left\|v_{\star}\right\|_{L^p(\R^d,\C^N)} + \left\|A\triangle v_{\star}\right\|_{L^p(\R^d,\C^N)} + \left\|g\right\|_{L^p(\R^d,\C^N)} \\
    \leqslant& |\lambda I+B|\left\|v_{\star}\right\|_{L^p(\R^d,\C^N)} + C_3\left\|v_{\star}\right\|_{W^{2,p}(\R^d,\C^N)} + \left\|g\right\|_{L^p(\R^d,\C^N)} \\
    \leqslant& \left(|\lambda I+B|c_1 + C_3c_3 + 1\right)\left\|g\right\|_{L^p(\R^d,\C^N)}
            =  c_4\left\|g\right\|_{L^p(\R^d,\C^N)}.
  \end{align*}
\end{proof}


\def\cprime{$'$}

\end{document}